\newcommand{\comment}[1]{}
\newcommand{\reals}{\mbox{$\mathbb R$}}
\newcommand{\nats}{\mbox{$\mathbb N$}}
\newcommand{\ints}{\mbox{$\mathbb Z$}}
\newcommand{\spn}{{\mathop{\mathrm{span}}\nolimits}}
\newcommand{\conv}{{\mathop{\mathrm{conv}}\nolimits}}
\newcommand{\proj}{{\mathop{\mathrm{Proj}}\nolimits}}
\newcommand{\supp}{{\mathop{\mathrm{supp}}\nolimits}} 
\DeclareRobustCommand{\stirling}{\genfrac\{\}{0pt}{}} 
\def\squarebox#1{\hbox to #1{\hfill\vbox to #1{\vfill}}}
\def\qed{\hspace*{\fill}
        \vbox{\hrule\hbox{\vrule\squarebox{.667em}\vrule}\hrule}\smallskip}
\newenvironment{proof}{\begin{trivlist}
  \item[\hspace{\labelsep}{\em\noindent Proof.~}]
  }{\qed\end{trivlist}}
\newtheorem{lemma}{Lemma}[section]
\newtheorem{theorem}[lemma]{Theorem}
\newtheorem{corollary}[lemma]{Corollary}
\newtheorem{proposition}[lemma]{Proposition}
\newtheorem{claim}[lemma]{Claim}
\newtheorem{observation}[lemma]{Observation}
\newtheorem{definition}[lemma]{Definition}
\def\squareforqed{\hbox{\rlap{$\sqcap$}$\sqcup$}}
\def\qed{\ifmmode\squareforqed\else{\unskip\nobreak\hfil
\penalty50\hskip1em\null\nobreak\hfil\squareforqed
\parfillskip=0pt\finalhyphendemerits=0\endgraf}\fi}
\newlength{\tablength}
\newlength{\spacelength}
\newcommand{\tabstar}{\hspace*{\tablength}}
\newcommand{\spacestar}{\hspace*{\spacelength}}
\def\obeytabs{\catcode`\^^I=\active}
{\obeytabs\global\let^^I=\tabstar}
{\obeyspaces\global\let =\spacestar}
\newenvironment{display}{\begingroup\obeylines\obeyspaces\obeytabs}{\endgroup}
\newenvironment{prog}{\begin{display}\parskip0pt\sf}{\end{display}}
\title{On a special class of general permutahedra} 
\author{
{\sl Geir Agnarsson} 
\thanks{Department of Mathematical Sciences,
George Mason University,
MS 3F2, 4400 University Drive,
Fairfax, VA -- 22030, USA,
{\tt geir@math.gmu.edu}}
}
\date{}
\begin{document}

\maketitle

\begin{abstract}
Minkowski sums of simplices in 
${\mathbb{R}}^n$ form an interesting class of polytopes
that seem to emerge in various situations. In this paper
we discuss the Minkowski sum of the 
simplices $\Delta_{k-1}$ in ${\mathbb{R}}^n$ where $k$ and $n$ are
fixed, their flags and some of their face lattice structure.
In particular, we derive a closed formula for 
their {\em exponential generating flag function}.
These polytopes are simple, include both the simplex $\Delta_{n-1}$
and the permutahedron $\Pi_{n-1}$, and form a Minkowski basis
for more general permutahedra.

\vspace{3 mm}

\noindent {\bf 2010 MSC:} 05A15, 52B05, 52B11.

\vspace{2 mm}

\noindent {\bf Keywords:} 
polytope, 
permutahedron,
Minkowski sum, 
flag polynomial,
exponential flag function.
\end{abstract}

\section{Introduction and motivation}
\label{sec:intro}

The Minkowski sum of simplices yields an important
class of polytopes that includes and generalizes many known
polytopes. For related references and some of the history of the
significance of Minkowski sums of simplices we refer to
the introduction in~\cite{Minkowski}. In~\cite{MinkPoly} a 
closed formula for the $\ell$-flag polynomial 
(See Definition~\ref{def:flag-poly})
for an arbitrary Minkowski sum of $k$ simplices is derived.
In particular, this yields a closed formula for the $f$-vectors of
generalized associahedra from~\cite{Postnikov}. 
This mentioned formula, however, 
is in terms of the {\em master polytope} $P(k)$, a $(2^k-2)$-dimensional 
polytope, the structure of which little is known about except when 
$k\leq 2$~\cite{Minkowski},~\cite{MinkPoly}.
In this paper we focus on the family of Minkowski sum of the 
simplices of a fixed dimension. These polytopes are interesting
for a variety of reasons. We mention a few here without attempting
to be exhaustive: 
(i) The polytopes in this family are all simple, and so 
they have a nice enumeration of their flags of arbitrary length
as we will see shortly (see Lemma~\ref{lmm:simple}). 
(ii) This family forms a chain, or an incremental bridge, between the simplex
of a given dimension and the standard permutahedron of the same
dimension, where each step, or link, is between two such simple
polytopes that differ minimally, as we will see in Section~\ref{sec:our-fam}.
(iii) Each polytope in this family is symmetric with respect
to permutation of their coordinates, like the simplex and the standard
permutahedron. In fact, they make up a subclass of the class of
generalized permutohedra studied in~\cite{Postnikov} and~\cite{Post-Vic-Laur},
something we will discuss in more detail in Section~\ref{sec:gen-perm}.
(iv) By contracting each face formed by vertices of identical positive 
support of any polytope of this family, 
one obtains a hypersimplex; a particular matroid base polytope 
(or matroid basis polytope) of the uniform matroid
formed by all subsets of a fixed cardinality (the rank of the matroid) from
a given ground set. In fact, each matroid base polytope of a matroid
of a given rank is contained in a ``mother''-hypersimplex, that is, 
its vertices are among the vertices of the ``mother''-hypersimplex. 
The flags of matroid base polytopes have been studied in the literature, 
in particular in~\cite{Sangwook-conf} and~\cite{Sangwook-JCT}, in which 
a characterization of the faces of the matroid base polytopes is presented.
Also, a formula for the $\mathbf{c}\mathbf{d}$-index of rank-two matroid 
base polytope is presented, describing the number of their flags in the 
most compact way possible, from a linear relations perspective.
(v) Last but not least, this family forms a Minkowski basis for certain
generalized permutahedra of the form $P_{n-1}(\tilde{x})$ as defined 
and discussed in~\cite[p.~13]{Postnikov} in terms of non-negative
integer combination as Minkowski sums. Indeed, matroid base polytopes
form a subclass of the family of generalized permutahedra as shown
in~\cite{Ard-Bene-Doker} where some of the work from~\cite{Postnikov}
is generalized, especially the volume of a general matroid base polytope.
This will discussed in Section~\ref{sec:gen-perm}.

The motivation for this paper stems from an observation on
the enumeration of the flags of the standard permutahedron, 
presented in Proposition~\ref{prp:exp-gen-func} here below, 
which we now will parse through and discuss.
 
Recall that the {\em permutahedron $\Pi_{n-1}$} is defined
to be the convex hull of 
$\{ (\pi(1), \pi(2),\ldots,\pi(n)) \in {\reals}^n : \pi \in S_n\}$
where $S_n$ is the symmetric group of degree $n$.
The faces of $\Pi_{n-1}$ have a nice combinatorial description
as presented in Ziegler~\cite[p.~18]{Ziegler}: each $i$-dimensional 
face of $\Pi_{n-1}$ can
be presented as an ordered partition of the set $[n] = \{1,\ldots,n\}$ into
exactly $n-i$ distinct parts. In particular, $\Pi_{n-1}$ has
$\stirling{n}{n-i}(n-i)!$ faces of dimension $i$ for each 
$i\in\{0,1,\ldots,n\}$, where $\stirling{n}{k}$ denotes the Stirling 
number of the 2nd kind. 

{\sc Conventions:} (i) For an $\ell$-tuple 
$\tilde{x} = (x_1,\ldots,x_{\ell})$
of variables and an $\ell$-tuple of numbers $\tilde{a} = (a_1,\ldots,a_{\ell})$, 
let ${\tilde{x}}^{\tilde{a}} = x_1^{a_1}x_2^{a_2}\cdots x_{\ell}^{a_{\ell}}$.
(ii) For $\tilde{a} = (a_1,\ldots,a_{\ell})$ let
$\partial(\tilde{a}) = (a_1, a_2-a_1,a_3-a_2,\ldots,a_{\ell}-a_{\ell - 1})$.
The following definition is from \cite{MinkPoly}:
\begin{definition}
\label{def:flag-poly}
Let $P$ be a polytope with $\dim(P) = d$ and $\ell \in \nats$. 
For an $\ell$-tuple of variables $\tilde{x} = (x_1,\ldots,x_{\ell})$  
the {\em $\ell$-flag polynomial} of $P$ is defined by
\[
{\tilde{f}}^{\ell}_{P}(\tilde{x}) := 
\sum_{\tilde{s}}f_{\tilde{s}}(P){\tilde{x}}^{\partial({\tilde{s}})},
\]
where the sum is taken over all chains $\tilde{s} = (s_1,\ldots,s_{\ell})$
with $0\leq s_1\leq s_2\leq \cdots \leq s_{\ell}\leq d$
and $f_{\tilde{s}}(P)$ denotes the number of chains of faces
$A_1\subseteq A_2\subseteq \cdots \subseteq A_{\ell}$ 
of $P$ with $\dim(A_i) = s_i$ for each $i\in\{1,\ldots, \ell\}$.
\end{definition}
{\sc Conventions:}
(i) For a vector $\tilde{c} = (c_1,\ldots,c_n)$ we denote the
linear functional $\tilde{x}\mapsto \tilde{c}\cdot\tilde{x}$
by $L_{\tilde{c}}$.
(ii) For a given vector $\tilde{c}$ and a polytope $P$,
we denote by $F_P(\tilde{c})$ or just $F(\tilde{c})$ 
the unique face of $P$ determined
by $\tilde{c}$ as the points that maximize $L_{\tilde{c}}$ when 
restricted to $P$. Further, we denote the set of
all the faces of $P$ by $\mathbf{F}(P)$.
More specifically we denote the set of the $i$-dimensional
faces of $P$ by ${\mathbf{F}}_i(P)$, in particular, 
${\mathbf{F}}_0(P)$ denotes the set of vertices of $P$. 
(iii) Finally, for a vector $\tilde{c}$ 
the set $\supp(\tilde{c}) = \{c_1,\ldots,c_n\}$ is the {\em support}
of $\tilde{c}$.

Consider now the well-known description of the $i$-dimensional 
faces of $\Pi_{n-1}$ as the ordered partitions of $[n]$ into $n-i$ parts: 
more explicitly, each functional $L_{\tilde{c}}$ 
where the support $\supp(\tilde{c})$ has exactly $n-i$ distinct
values $c_1 < c_2 < \cdots < c_{n-i}$, when restricted to
$\Pi_{n-1}$, takes its maximum value at exactly one $i$-dimensional
face $A$. Here each value $c_i$ of the support corresponds uniquely
to one of the ordered parts defining the face $A$.
Also, by ``merging'' two such consecutive values 
$c_h$ and $c_{h+1}$ (for example, by replacing both $c_h$ and
$c_{h+1}$ by their average), we obtain a new functional
$L_{\tilde{c}'}$ which is maximized
at a face $A'$ of dimension $i+1$ that contains the face $A$.
So, by merging two consecutive parts into one part, we obtain
a coarser ordered partition of $[n]$. This merging process
can clearly be repeated. 
In this case we informally say that the first partition is a 
{\em refinement} of the last partition, or equivalently that the last partition 
is a {\em coarsening} of the first one. 
\begin{observation}
\label{obs:coarse}
For faces $A,B\in\mathbf{F}(\Pi_{n-1})$, then 
$A\subseteq B$ if, and only if, the ordered partition
of $[n]$ corresponding to $A$ is a refinement of
the ordered partition of $[n]$ corresponding to $B$.
\end{observation}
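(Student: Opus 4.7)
The plan is to reduce the observation to a statement about vertex sets by first working out the vertices of each face directly from its defining functional. Given an ordered partition $\mathcal{S}=(S_1,\ldots,S_p)$ of $[n]$, I would pick any $\tilde{c}$ with $c_i=\alpha_j$ for $i\in S_j$ and $\alpha_1<\cdots<\alpha_p$, and use a standard rearrangement-inequality argument on the sum $\sum c_i\pi(i)$ to conclude that the face $A=F(\tilde{c})$ has vertex set exactly
\[
V(\mathcal{S}) \;=\; \{\pi\in S_n : \pi(S_j)=B_j \text{ for every } j\},
\]
where $B_j := \{|S_1|+\cdots+|S_{j-1}|+1,\ldots,|S_1|+\cdots+|S_j|\}$ is the $j$-th consecutive block of values. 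Because each face of $\Pi_{n-1}$ is the convex hull of its vertices and its vertex set is a subset of ${\mathbf{F}}_0(\Pi_{n-1})$, containment of faces is equivalent to containment of vertex sets, so the observation reduces to showing $V(\mathcal{S})\subseteq V(\mathcal{T})$ iff $\mathcal{S}$ refines $\mathcal{T}$.

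For the ``if'' direction, suppose $\mathcal{T}=(T_1,\ldots,T_q)$ is a coarsening obtained by merging consecutive parts of $\mathcal{S}$, so each $T_k=S_{i_{k-1}+1}\cup\cdots\cup S_{i_k}$. Then for any $\pi\in V(\mathcal{S})$ one has $\pi(T_k)=\bigcup_{j=i_{k-1}+1}^{i_k} B_j$, which is precisely the consecutive block of values that $\mathcal{T}$ prescribes for $T_k$, so $\pi\in V(\mathcal{T})$. This is a short direct check.

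The converse is where the substance lies and is the step I expect to be the main obstacle. Assuming $V(\mathcal{S})\subseteq V(\mathcal{T})$, the key leverage is that $V(\mathcal{S})$ contains \emph{every} permutation sending each $S_j$ onto $B_j$, so one can independently permute within each block $B_j$. Hence $\pi(T_k\cap S_j)$ can be realized as an arbitrary subset of $B_j$ of size $|T_k\cap S_j|$; for $\pi(T_k)$ to equal the same fixed consecutive block of integers for every such $\pi$, we are forced into $T_k\cap S_j\in\{\emptyset,S_j\}$ for all $j,k$. This already yields the refinement statement at the level of set partitions. The slightly delicate remaining point, which I flag as the actual obstacle, is verifying that the \emph{order} of the parts is respected. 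For this I would argue that $\pi(T_k)=\bigcup_{j\in J_k}B_j$ with $J_k=\{j:S_j\subseteq T_k\}$ must itself be an interval of integers; since the $B_j$ are themselves disjoint consecutive blocks laid out in increasing order of $j$, this forces each $J_k$ to be an interval of $[p]$, and since $\pi(T_1),\pi(T_2),\ldots$ are consecutive integer intervals starting at $1$, the $J_k$ must in turn partition $[p]$ in order. This exhibits $\mathcal{S}$ as an ordered refinement of $\mathcal{T}$ and finishes the proof.
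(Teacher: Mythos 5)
Your argument is correct, but it is more self-contained than what the paper actually does, and the two routes differ in emphasis. The paper never reduces to vertex sets: it invokes the known description (from Ziegler) of the faces of $\Pi_{n-1}$ as ordered partitions of $[n]$, and justifies only the ``coarsening $\Rightarrow$ containment'' direction explicitly, by merging two consecutive values of the defining functional $L_{\tilde{c}}$ (replacing $c_h,c_{h+1}$ by their average) and noting that the max-face can only grow; the converse direction is left implicit in the cited face-lattice description. You instead prove both directions from scratch: you identify the vertex set $V(\mathcal{S})=\{\pi : \pi(S_j)=B_j\}$ of the face attached to an ordered partition via a rearrangement argument, reduce face containment to containment of vertex sets (legitimate, since a face is determined by the polytope vertices it contains), and then carry out the combinatorics on permutations --- in particular the converse, where independently permuting within each block $B_j$ forces $T_k\cap S_j\in\{\emptyset,S_j\}$, and the interval structure of the $B_j$ forces the index sets $J_k$ to be consecutive intervals of $[p]$ in order, which is exactly the ordered-refinement condition the paper's informal ``merging'' definition requires. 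What your approach buys is a complete elementary verification of the ``only if'' half, which the paper treats as well known; what the paper's approach buys is brevity and a direct link between coarsening of partitions and the functional-merging picture that it reuses later (e.g.\ before Proposition~\ref{prp:v-e-simple} and in Section~\ref{sec:flag-poly}). One small point to keep consistent: your convention assigns the smallest value block $B_1$ to the part with smallest $c$-value, which is fine, but should be fixed once and used on both sides of the correspondence, since the refinement statement is insensitive to the convention only if it is applied uniformly to $A$ and $B$.
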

Given a chain $\tilde{s} = (s_1,\ldots,s_{\ell})$
with $0\leq s_1\leq s_2\leq \cdots \leq s_{\ell}\leq n-1$, 
the number $f_{\tilde{s}}(\Pi_{n-1})$ of chains of faces
$A_1\subseteq A_2\subseteq \cdots \subseteq A_{\ell}$ 
of $\Pi_{n-1}$ with $\dim(A_i) = s_i$ for each $i\in\{1,\ldots, \ell\}$
can then by Observation~\ref{obs:coarse} be obtained by 
first considering any of the 
$\stirling{n}{n-s_1}(n-s_1)!$ faces $A_1$ of dimension $s_1$, then
merging $s_2-s_1$ consecutive parts (in the ordered partition defining
the face $A_1$) of the $n-s_1-1$ available consecutive pairs,
then merging $s_3-s_2$ consecutive parts of the $n-s_2-1$ available
consecutive parts, and so on. Therefore 
the number  $f_{\tilde{s}}(\Pi_{n-1})$ of chains
of faces $A_1\subseteq A_2\subseteq \cdots \subseteq A_{\ell}$
where $\dim(A_i) = s_i$ for each $i$ is given by
\begin{eqnarray}
f_{\tilde{s}}(\Pi_{n-1}) & = & 
  \stirling{n}{n-s_1}(n-s_1)!
  \binom{n-s_1 -1}{s_2-s_1}\binom{n-s_2-1}{s_3-s_2}
  \cdots\binom{n-s_{\ell-1}-1}{s_{\ell}-s_{\ell-1}} \nonumber \\
 & = & \stirling{n}{n-s_1}(n-s_1)!
\binom{n-s_1 -1}{s_2-s_1 \ \cdots \ \ s_{\ell}-s_{\ell-1} \ \ n - s_{\ell}- 1}.
\label{eqn:chain-perm}
\end{eqnarray}
Such a simple formula for the number of $\tilde{s}$-chains of 
faces of $\Pi_{n-1}$
as in (\ref{eqn:chain-perm}) is not a coincidence, as it is solely 
the consequence of $\Pi_{n-1}$ being a simple polytope: each vertex 
of a simple $d$-polytope has $d$ neighboring vertices and is contained
in $d$ facets, and so each $k$-face containing a given vertex is uniquely
determined by $\binom{d}{k}$ of its neighbors. Hence, for each $h\leq k$
every $h$-face is contained in exactly $\binom{d-h}{k-h}$ $k$-faces,
and we obtain in general, as above, the following.
\begin{lemma}
\label{lmm:simple}
For any simple $d$-polytope $P$ and a chain $\tilde{s} = (s_1,\ldots,s_{\ell})$
with $0\leq s_1\leq s_2\leq \cdots \leq s_{\ell}\leq d$, we have 
\[
f_{\tilde{s}}(P) =
f_{s_1}(P) 
\binom{d-s_1}{s_2-s_1 \ \cdots \ \ s_{\ell}-s_{\ell-1} \ \ d - s_{\ell}},
\]
where $f_{s_1}(P)$ is the number of $s_1$-faces of $P$.
\end{lemma}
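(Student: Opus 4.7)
The plan is to count the chains $A_1\subseteq A_2\subseteq\cdots\subseteq A_\ell$ sequentially, first choosing $A_1$ and then successively picking each $A_{i+1}$ among the faces containing $A_i$. The starting term $f_{s_1}(P)$ appears immediately from the $f_{s_1}(P)$ choices for $A_1$, so everything hinges on knowing, for a fixed $i$-th face, how many valid $(s_{i+1})$-faces sit above it.

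The essential input is the structural property of simple polytopes already recorded in the paragraph preceding the lemma: in a simple $d$-polytope, each $h$-face $A$ is contained in exactly $\binom{d-h}{k-h}$ $k$-faces for every $h\leq k\leq d$. The clean way I would justify this (and why it can be iterated) is to note that the upper interval $[A,P]$ in the face lattice is isomorphic to the face lattice of a $(d-h)$-simplex, which is the defining feature of simplicity at $A$; consequently the count $\binom{d-h}{k-h}$ is independent of the choice of $A$, and the same structural property is inherited when one restricts attention to the faces above a chosen $A_i$. So the number of ways to extend $A_i$ to $A_{i+1}$ with $\dim(A_{i+1})=s_{i+1}$ is $\binom{d-s_i}{s_{i+1}-s_i}$, regardless of which $A_i$ was picked.

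Multiplying out these independent choices yields
\[
f_{\tilde{s}}(P) \;=\; f_{s_1}(P)\,\prod_{i=1}^{\ell-1}\binom{d-s_i}{s_{i+1}-s_i},
\]
and the last step is to observe that this product telescopes: writing each binomial as a ratio of factorials, the $(d-s_i)!$ in the numerator of the $i$-th factor cancels the $(d-s_i)!$ in the denominator of the $(i-1)$-st factor, leaving exactly
\[
\frac{(d-s_1)!}{(s_2-s_1)!(s_3-s_2)!\cdots(s_\ell-s_{\ell-1})!(d-s_\ell)!}
\;=\;\binom{d-s_1}{s_2-s_1\ \cdots\ s_\ell-s_{\ell-1}\ \ d-s_\ell},
\]
matching the right-hand side of the lemma. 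There is no real obstacle here; the only subtlety worth stating explicitly is the independence of the count $\binom{d-s_i}{s_{i+1}-s_i}$ from the particular $A_i$ chosen, which is exactly what simplicity buys us and what lets the sequential-multiplication argument go through.
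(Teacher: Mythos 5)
Your argument is correct and essentially the paper's own: the key input is that in a simple $d$-polytope every $h$-face lies in exactly $\binom{d-h}{k-h}$ $k$-faces (the fact stated in the paragraph preceding the lemma), after which one chooses $A_1,A_2,\ldots,A_\ell$ sequentially and telescopes the product $\prod_{i=1}^{\ell-1}\binom{d-s_i}{s_{i+1}-s_i}$ into the multinomial coefficient, exactly as the paper does. The only slip is cosmetic: the upper interval $[A,P]$ above an $h$-face is isomorphic to the face lattice of a $(d-h-1)$-simplex (a Boolean lattice of rank $d-h$), not of a $(d-h)$-simplex, but the count $\binom{d-h}{k-h}$ you actually use is the correct one, so the independence from the choice of $A_i$ and the rest of the argument go through unchanged.
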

Now, assume for a moment that for a $d$-polytope $P$ we have
a polynomial ${\tilde{\phi}}^{\ell}_{P}$ of the form
\begin{equation}
\label{eqn:phi}
{\tilde{\phi}}^{\ell}_{P}(\tilde{x}) := 
\sum_{\tilde{s}}
\frac{f_{\tilde{s}}(P)}{D(d,s_1)}{\tilde{x}}^{\partial({\tilde{s}})},
\end{equation}
where $D$ is a bivariate function on non-negative integers. If
$D(d,s_1) = 1$ for all $d,s_1$ then 
${\tilde{\phi}}^{\ell} = {\tilde{f}}^{\ell}$, the $\ell$-flag polynomial from 
Definition~\ref{def:flag-poly}. If $P$ is simple, 
then by the above Lemma~\ref{lmm:simple}, the multinomial theorem and
the definition of the $f$-polynomial ($f = {\tilde{f}}^{1}$ obtained 
by letting $\ell = 1$ in Definition~\ref{def:flag-poly}), 
we obtain the following:
\begin{eqnarray*}
{\tilde{\phi}}^{\ell}_{P}(\tilde{x}) 
  & = & \sum_{\tilde{s}}\frac{f_{\tilde{s}}(P)}{D(d,s_1)}
{\tilde{x}}^{\partial({\tilde{s}})} \\
  & = & \sum_{\tilde{s}}
\frac{f_{s_1}(P)\binom{d-s_1}{s_2-s_1 \ \cdots \ \ s_{\ell}-s_{\ell-1} 
\ \ d - s_{\ell}}}{D(d,s_1)}
x_1^{s_1}x_2^{s_2-s_1}\cdots x_{\ell}^{s_{\ell}-s_{\ell-1}} \\ 
  & = & \sum_{s_1}\frac{f_{s_1}(P)}{D(d,s_1)}x_1^{s_1}\sum_{s_2,\ldots,s_{\ell}}
\binom{d-s_1}{s_2-s_1 \ \cdots \ \ s_{\ell}-s_{\ell-1} \ \ d - s_{\ell}}
x_2^{s_2-s_1}\cdots x_{\ell}^{s_{\ell}-s_{\ell-1}} \\
  & = & \sum_{s_1}\frac{f_{s_1}(P)}{D(d,s_1)}
x_1^{s_1}(x_2+\cdots+x_{\ell}+1)^{d-s_1} \\
  & = & (x_2+\cdots+x_{\ell}+1)^d\sum_{s_1}
\frac{f_{s_1}(P)}{D(d,s_1)}
\left(\frac{x_1}{x_2+\cdots+x_{\ell}+1}\right)^{s_1} \\ 
  & = & (x_2+\cdots+x_{\ell}+1)^d{\tilde{\phi}}^{1}_P
\left(\frac{x_1}{x_2+\cdots+x_{\ell}+1}\right),
\end{eqnarray*} 
showing that ${\tilde{\phi}}^{\ell}_{P}$ is uniquely determined by 
${\tilde{\phi}}^{1}_P$ if $P$ is a simple $d$ polytope. 
\begin{corollary}
\label{cor:simple-flag}
For a simple $d$-polytope $P$ and ${\tilde{\phi}}^{\ell}_{P}$ 
from (\ref{eqn:phi}) we have
\[
{\tilde{\phi}}^{\ell}_{P}(\tilde{x}) = 
(x_2 + \cdots + x_{\ell} + 1)^d
{\tilde{\phi}}^1_P\left(\frac{x_1}{x_2 + \cdots + x_{\ell} + 1}\right).
\]
\end{corollary}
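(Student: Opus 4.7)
The plan is to substitute the simple-polytope chain formula from Lemma~\ref{lmm:simple} into the definition (\ref{eqn:phi}) and then collapse the multi-index sum via the multinomial theorem. The whole argument is essentially algebraic bookkeeping with the convention $\tilde{x}^{\partial(\tilde{s})} = x_1^{s_1}x_2^{s_2-s_1}\cdots x_\ell^{s_\ell-s_{\ell-1}}$; no new polytopal input is needed beyond simplicity of $P$.

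First I would rewrite ${\tilde{\phi}}^{\ell}_{P}(\tilde{x})$ by replacing $f_{\tilde{s}}(P)$ with
$f_{s_1}(P)\binom{d-s_1}{s_2-s_1,\ldots,s_\ell-s_{\ell-1},d-s_\ell}$, which is justified by Lemma~\ref{lmm:simple}. Then I would split the overall sum over chains $\tilde{s}=(s_1,\ldots,s_\ell)$ into an outer sum over $s_1\in\{0,1,\ldots,d\}$ and an inner sum over $(s_2,\ldots,s_\ell)$ with $s_1\leq s_2\leq \cdots\leq s_\ell\leq d$. Pulling out the $s_1$-dependent factors $f_{s_1}(P)/D(d,s_1)$ and $x_1^{s_1}$, I would change variables to the nonnegative gaps $k_j = s_j - s_{j-1}$ for $2\leq j\leq \ell$ together with $k_{\ell+1}=d-s_\ell$, so the inner sum becomes
\[
\sum_{\substack{k_2,\ldots,k_{\ell+1}\geq 0 \\ k_2+\cdots+k_{\ell+1}=d-s_1}}
\binom{d-s_1}{k_2,\ldots,k_{\ell+1}}\,x_2^{k_2}\cdots x_\ell^{k_\ell}\cdot 1^{k_{\ell+1}}.
\]

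By the multinomial theorem this inner sum equals $(x_2+\cdots+x_\ell+1)^{d-s_1}$. Factoring $(x_2+\cdots+x_\ell+1)^d$ out of the remaining outer sum in $s_1$ leaves
\[
\sum_{s_1}\frac{f_{s_1}(P)}{D(d,s_1)}\left(\frac{x_1}{x_2+\cdots+x_\ell+1}\right)^{s_1},
\]
which, by (\ref{eqn:phi}) with $\ell=1$, is exactly ${\tilde{\phi}}^{1}_{P}\!\left(\tfrac{x_1}{x_2+\cdots+x_\ell+1}\right)$. Combining yields the stated identity.

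There is no real obstacle; the only thing requiring care is the indexing. One must correctly identify the implicit ``last gap'' $d-s_\ell$ as the part of the multinomial that picks up the $1^{k_{\ell+1}}$ factor, because without it the multinomial identity has the wrong upper index and the proof fails. Once that slot is visible, the calculation reduces to exactly the chain of equalities displayed before the corollary.
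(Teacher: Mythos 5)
Your proposal is correct and is essentially the same argument as the paper's: the paper proves the corollary by exactly this substitution of Lemma~\ref{lmm:simple} into (\ref{eqn:phi}), collapsing the inner sum over $(s_2,\ldots,s_\ell)$ via the multinomial theorem (with the implicit last slot $d-s_\ell$ contributing the $1$), and recognizing the remaining $s_1$-sum as ${\tilde{\phi}}^1_P$ evaluated at $x_1/(x_2+\cdots+x_\ell+1)$. No discrepancies to note.
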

In particular, the $\ell$-flag polynomial for any simple polytope
is uniquely determined by its $f$-polynomial
\[
{\tilde{f}}^{\ell}_{P}(\tilde{x}) = 
(x_2 + \cdots + x_{\ell} + 1)^d 
f_P\left(\frac{x_1}{x_2 + \cdots + x_{\ell} + 1}\right).
\]
{\sc Remark:} Despite this enumerative bonus for simple (and dually
for simplicial) polytopes, the number of the flags do not yield much of the 
actual face lattice structure of simple or simplicial polytopes.

Going back to our motivating permutahedron $\Pi_{n-1}$ and its number
$f_i(\Pi_{n-1}) = \stirling{n}{n-i}(n-i)!$ of faces, we see that for $n\geq 1$
\[
\sum_{i=0}^{n-1}\frac{f_i(\Pi_{n-1})}{(n-i)!}x^{n-i} 
= \sum_{i=0}^{n-1}\stirling{n}{n-i}x^{n-i} 
= T_n(x),
\]
where $T_n(x)$ is the Touchard polynomial of degree $n$, a.k.a.~the 
Bell polynomial in one variable of degree $n$, as 
$T_n(x) = B_n(x,\ldots,x)$ where $B_n(x_1,\ldots,x_n)$
is the {\em complete Bell polynomial} of degree $n$ in $n$ variables
denoted by $\phi_n(x_1,\ldots,x_n)$ in~\cite[p.~263]{Bell}),
and we have the corresponding bivariate exponential generating 
function~\cite[p.~265]{Bell} 
\begin{equation}
\label{eqn:Touch-St2}
T(x,y) 
= \sum_{n\geq 0}T_n(x)\frac{y^n}{n!} 
= \sum_{n,k \geq 0}\stirling{n}{k}x^k\frac{y^n}{n!} = e^{x(e^y-1)}.
\end{equation}
This suggests an exponential version of the $\ell$-flag polynomial
from Definition~\ref{def:flag-poly}.
\begin{definition}
\label{def:flag-poly-func}
Let $P$ be a $d$-polytope and $\ell\in\nats$. 
For an $\ell$-tuple of variables $\tilde{x} = (x_1,\ldots,x_{\ell})$  
define the {\em exponential ${\ell}$-flag polynomial} of $P$ by
\[
{\tilde{\xi}}^{\ell}_{P}(\tilde{x}) := 
\sum_{\tilde{s}}
\frac{f_{\tilde{s}}(P)}{(d-s_1+1)!}{\tilde{x}}^{\partial({\tilde{s}})},
\]
where the sum is taken over all chains $\tilde{s} = (s_1,\ldots,s_{\ell})$
with $0\leq s_1\leq s_2\leq \cdots \leq s_{\ell}\leq d$.

For each $a\geq 0$ define the 
{\em exponential ${\ell}$-generating function} of a given
family of polytopes ${\cal{P}} = \{P_d\}_{d\geq 0}$, where each $P_d$ is
of dimension $d$, by
\[
{\tilde{\xi}}_{{\cal{P}};a}^{\ell}(\tilde{x},y) := 
\sum_{d\geq a}{\tilde{\xi}}^{\ell}_{P_d}(\tilde{x})\frac{y^{d+1}}{(d+1)!}.
\] 
\end{definition}
In the case of $\ell=1$ we call
${\xi}_{P}(x) := {\tilde{\xi}}_{P}^{1}(x)$ the 
{\em exponential face (or $f$-) polynomial} of $P$ and 
for a family of polytopes ${\cal{P}} = \{P_d\}_{d\geq 0}$, 
each $P_d$ a $d$-polytope, 
we call ${\xi}_{{\cal{P}};a}(x,y) := {\tilde{\xi}}_{{\cal{P}};a}^{1}(x,y)$ 
the {\em exponential face function} of ${\cal{P}}$. When there is
not ambiguity and both the family ${\cal{P}}$ and the starting point $a$ are
clear, we omit the subscript in ${\xi}_{{\cal{P}};a}$ and simply write
${\xi}$.

For $a=0$ and ${\cal{P}} = \{\Pi_{n-1}\}_{n\geq 1}$ we get by 
(\ref{eqn:Touch-St2})
\begin{eqnarray*}
{\xi}(x,y) & = & \sum_{n\geq 1}{\xi}_{\Pi_{n-1}}(x)\frac{y^{n}}{n!} \\
  & = & \sum_{n\geq 1}
\left(\sum_{i=0}^{n-1}\frac{f_i(\Pi_{n-1})}{(n-i)!}x^i\right)\frac{y^{n}}{n!} \\
  & = & \sum_{n\geq 1}
\left(\sum_{i=0}^{n-1}\frac{f_i(\Pi_{n-1})}{(n-i)!}x^{-(n-i)}\right)
\frac{(xy)^{n}}{n!} \\
  & = & \sum_{n\geq 1}T_n(x^{-1})\frac{(xy)^{n}}{n!} \\
  & = & T(x^{-1},xy) - 1. 
\end{eqnarray*}
So by Corollary~\ref{cor:simple-flag} applied to 
${\tilde{\xi}}^{\ell}_{\Pi_{n-1}}$ we then get
\[
{\tilde{\xi}}^{\ell}(\tilde{x},y)
= \frac{1}{x_2 + \cdots + x_{\ell} + 1}{\xi}
\left(\frac{x_1}{x_2 + \cdots + x_{\ell} + 1},
(x_2 + \cdots + x_{\ell} + 1)y\right)
= \frac{T\left(\frac{x_2 + \cdots + x_{\ell} + 1}{x_1},
x_1y\right)-1}{x_2 + \cdots + x_{\ell} + 1},
\]
and again by (\ref{eqn:Touch-St2}) we get the following proposition.
\begin{proposition}
\label{prp:exp-gen-func}
The exponential generating function for 
all the $\ell$-flags of all the permutahedra $\Pi_{n-1}$ for $n\geq 1$
from Definition~\ref{def:flag-poly-func} is given by
\[
{\tilde{\xi}}^{\ell}(\tilde{x},y) = 
\sum_{n\geq 1,\tilde{s}}{\tilde{\xi}}^{\ell}_{\Pi_{n-1}}(\tilde{x})\frac{y^n}{n!} = 
\frac{e^{\frac{x_2 + \cdots + x_{\ell} +1}{x_1}(e^{x_1y}-1)} - 1}
{x_2 + \cdots + x_{\ell} +1}.
\]
\end{proposition}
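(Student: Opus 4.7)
The plan is to assemble the formula from three ingredients already prepared in the preceding text: (i) the closed expression $\xi(x,y) = T(x^{-1},xy)-1$ for the single-variable exponential face function of the family $\{\Pi_{n-1}\}_{n\geq 1}$, (ii) Corollary~\ref{cor:simple-flag} applied to the functional $\tilde{\phi}^{\ell}_{P} = \tilde{\xi}^{\ell}_{P}$ (which is the case $D(d,s_1) = (d-s_1+1)!$ of (\ref{eqn:phi})), and (iii) the exponential generating identity (\ref{eqn:Touch-St2}) for the Touchard polynomials.

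First I would verify that each $\Pi_{n-1}$ is indeed a simple $(n-1)$-polytope (this is standard; each vertex lies in exactly $n-1$ facets), so Corollary~\ref{cor:simple-flag} applies to it with $d=n-1$. This gives, for every $n\geq 1$,
\[
\tilde{\xi}^{\ell}_{\Pi_{n-1}}(\tilde{x}) = (x_2+\cdots+x_{\ell}+1)^{n-1}\,\xi_{\Pi_{n-1}}\!\left(\frac{x_1}{x_2+\cdots+x_{\ell}+1}\right).
\]
Multiplying both sides by $y^n/n!$ and summing over $n\geq 1$, the factor $(x_2+\cdots+x_{\ell}+1)^{n-1}$ can be absorbed by rescaling $y$: this yields
\[
\tilde{\xi}^{\ell}(\tilde{x},y) = \frac{1}{x_2+\cdots+x_{\ell}+1}\,\xi\!\left(\frac{x_1}{x_2+\cdots+x_{\ell}+1},\,(x_2+\cdots+x_{\ell}+1)y\right),
\]
exactly the identity displayed just before the proposition.

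Next, I would substitute the already-derived expression $\xi(x,y) = T(x^{-1},xy)-1$ into this relation. With $x = x_1/(x_2+\cdots+x_{\ell}+1)$ and $y$ replaced by $(x_2+\cdots+x_{\ell}+1)y$, the arguments of $T$ become $x^{-1} = (x_2+\cdots+x_{\ell}+1)/x_1$ and $xy$-argument equal to $x_1 y$. Thus
\[
\tilde{\xi}^{\ell}(\tilde{x},y) = \frac{T\!\bigl(\tfrac{x_2+\cdots+x_{\ell}+1}{x_1},\,x_1 y\bigr) - 1}{x_2+\cdots+x_{\ell}+1}.
\]
Finally, inserting the closed form $T(x,y) = e^{x(e^y-1)}$ from (\ref{eqn:Touch-St2}) yields the claimed formula.

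I do not anticipate any real obstacle: the only non-routine observation is that the factor $(x_2+\cdots+x_{\ell}+1)^{n-1}$ combines with $y^n/n!$ into $(x_2+\cdots+x_{\ell}+1)^{-1}$ times $((x_2+\cdots+x_{\ell}+1)y)^n/n!$, so the rescaling of $y$ is legitimate termwise and hence valid for the formal generating series. Everything else is pure substitution.
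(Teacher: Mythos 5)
Your proposal is correct and follows essentially the same route as the paper: the paper likewise obtains $\xi(x,y)=T(x^{-1},xy)-1$ from the Stirling-number face counts, applies Corollary~\ref{cor:simple-flag} (with $D(d,s_1)=(d-s_1+1)!$) to the simple polytopes $\Pi_{n-1}$ to reduce the $\ell$-flag generating function to the case $\ell=1$ via the rescaling of $y$ by $x_2+\cdots+x_{\ell}+1$, and then substitutes the closed form $T(x,y)=e^{x(e^y-1)}$ from (\ref{eqn:Touch-St2}). No gaps.
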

In particular, as the coefficient
$[{\tilde{x}}^{\partial({\tilde{s}})}y^n]{\tilde{\xi}}^{\ell}(\tilde{x},y)$ 
of ${\tilde{x}}^{\partial({\tilde{s}})}y^n$ in the expansion of 
${\tilde{\xi}}_e^{\ell}(\tilde{x},y)$ is given by
\[
[{\tilde{x}}^{\partial({\tilde{s}})}y^n]{\tilde{\xi}}_e^{\ell}(\tilde{x},y)=
\frac{f_{\tilde{s}}(\Pi_{n-1})}{(n-s_1)!n!},
\]
then by Proposition~\ref{prp:exp-gen-func} we have 
\[
f_{\tilde{s}}(\Pi_{n-1}) = (n-s_1)!n![{\tilde{x}}^{\partial({\tilde{s}})}y^n]
\left(
\frac{e^{\frac{x_2 + \cdots + x_{\ell} +1}{x_1}(e^{x_1y}-1)} - 1}
{x_2 + \cdots + x_{\ell} +1}
\right).
\]
{\sc Remarks:}  
(i) Needless to say, there are many ways to define
{\em an} exponential generating function for the ${\ell}$-flags of
the permutahedra; we chose one here that would yield nice formulae.
(ii) Note that for $\ell= 1$ in Proposition~\ref{prp:exp-gen-func}, the
sum $x_2 + \cdots + x_{\ell}$ is empty which yields the exponential 
face function 
\[
{\xi}(x,y) = e^{\left(\frac{e^{xy}-1}{x}\right)} - 1.
\]

Having presented our motivating example, a natural question arises 
whether formulae as in Proposition~\ref{prp:exp-gen-func} can be 
generalized to a larger family of polytopes that include the permutahedron
$\Pi_{n-1}$. This will be the subject of the rest of the paper, which
is organized as follows. In Section~\ref{sec:our-fam} we formally
define the polytopes $\Pi_{n-1}(k-1)$ for each $k\geq 1$ and $n\geq k$
and we present some basic properties. In Section~\ref{sec:gen-perm} we
describe how the {\em PI-family}
${\cal{P}}_n = \{\Pi_{n-1}(k-1)\}_{k = 2,\ldots,n}$
fits in with various other families that generalize the standard
permutahedron and we demonstrate how ${\cal{P}}_n$ forms a Minkowski
basis for one such family of polytopes. The remaining two sections form
the meat of this paper. In Section~\ref{sec:flag-poly} we derive
a formula for the $f$-polynomial of $\Pi_{n-1}(k-1)$ and describe
its flags in terms of ordered pseudo-partitions of $[n] = \{1,\ldots,n\}$,
in a similar way as we did in Observation~\ref{obs:coarse} for 
the standard permutahedron $\Pi_{n-1}$. Finally, in Section~\ref{sec:exp-func}
we derive a closed formula for the exponential ${\ell}$-generating function
${\tilde{\xi}}_{{\cal{P}}_k^{\perp};k-1}^{\ell}(\tilde{x},y)$ for an arbitrary but
fixed integer $k\geq 1$, where 
${\cal{P}}_k^{\perp} = \{\Pi_{n-1}(k-1)\}_{n\geq k}$. 
Note that both families ${\cal{P}}_n$ and ${\cal{P}}_k^{\perp}$ cover
all the polytopes $\Pi_{n-1}(k-1)$ when $n$ and $k$ roam respectively;
$\bigcup_{n\geq 2}{\cal{P}}_n = \bigcup_{k\geq 2}{\cal{P}}_k^{\perp}$
are both partitions of the set of all $\Pi_{n-1}(k-1)$.

\section{The PI-family of polytopes and basic properties}
\label{sec:our-fam}

In this section we define the PI-family of polytopes we investigate 
and present some basic properties that naturally
generalize those of the permutahedron $\Pi_{n-1}$ and the  simplex 
$\Delta_{n-1}$. First we recall some basic definitions and notations we
will be using.

For $n\in\nats$ and $[n] = \{1,2,\ldots,n\}$, the {\em (standard) simplex}
$\Delta_{n-1} = \Delta_{[n]}$ of dimension $n-1$ is given by
$
\Delta_{n-1} = \Delta_{[n]} = 
\{ \tilde{x} = (x_1,\ldots,x_n)\in {\reals}^n : x_i\geq 0 \mbox{ for all $i$ },
x_1+\cdots+x_n = 1\}.
$
Each subset $F\subseteq [n]$ yields a face 
$\Delta_F$ of $\Delta_{[n]}$ given by
$
\Delta_F = \{ \tilde{x} \in \Delta_{[n]} : x_i = 0 \mbox{ for } i\not\in F\}.
$
Clearly $\Delta_F$ is itself a simplex embedded in ${\reals}^n$. 
If ${\cal{F}}$ is a family of subsets of $[n]$, then we can form the 
{\em Minkowski sum} of simplices
\begin{equation}
\label{eqn:Minks-basic}
P_{\cal{F}} = \sum_{F\in{\cal{F}}}\Delta_F =
\left\{\sum_{F\in{\cal{F}}}\tilde{x}_F : 
\tilde{x}_F \in \Delta_F \mbox{ for each } F\in{\cal{F}}\right\}.
\end{equation}
In general, every nonempty face of any polytope $P\subseteq {\reals}^n$ 
(in particular of $\Delta_{[n]}$) 
is given by the set of points that maximize a linear
functional $L_{\tilde{c}} : \tilde{x}\mapsto\tilde{c}\cdot\tilde{x}$ 
restricted to $P$.
We note that the permutahedron $\Pi_{n-1}$ can be 
expressed as a {\em zonotope}, a Minkowski sum of 
simplices each of dimension one:
\[
\Pi_{n-1} = \sum_{F\subseteq [n],\ |F|=2}\Delta_F.
\]
In light of this we obtain a natural generalization
\begin{equation}
\label{eqn:family}
\Pi_{n-1}(k-1) := \sum_{F\subseteq [n],\ |F|=k}\Delta_F,
\end{equation}
for each fixed $k\geq 2$, the Minkowski sum of all
$(k-1)$-dimensional simplices in ${\reals}^n$. 
We will refer to $\Pi_{n-1}(k-1)$ from (\ref{eqn:family}) 
as the {\em general permutahedron}.
Note that $\Pi_{n-1}(1) = \Pi_{n-1}$, the standard permutahedron,
and $\Pi_{n-1}(n-1) = \Delta_{n-1}$, the standard $(n-1)$-dimensional simplex.

We now present some basic facts about Minkowski sums
of polytopes in general that we will be using.
\begin{lemma}
\label{lmm:sum-faces}
Let $P_1,\ldots, P_k$ be polytopes in ${\reals}^n$. 
Then $F\in \mathbf{F}\left(\sum_{i=1}^k P_i\right)$ 
iff 
(i) $F = \sum_{i=1}^k A_i$ where each $A_i\in\mathbf{F}(P_i)$ and
(ii) there is a linear functional $L$ on ${\reals}^n$ 
such that each $L|_{P_i}$ is maximized at $A_i$ and
$L|_{\sum_{k=1}^k P_i}$ is maximized at $F$.
\end{lemma}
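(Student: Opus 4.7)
The plan is to prove both implications by leveraging the fundamental fact that linear functionals distribute additively across Minkowski sums, so maximization also distributes.

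First I would establish the key identity: for any linear functional $L$ and any point $\tilde{x} = \sum_{i=1}^k \tilde{x}_i$ with $\tilde{x}_i \in P_i$, linearity gives $L(\tilde{x}) = \sum_{i=1}^k L(\tilde{x}_i)$, so
\[
\max_{\tilde{x}\in\sum_{i=1}^k P_i} L(\tilde{x}) \;=\; \sum_{i=1}^k \max_{\tilde{x}_i\in P_i} L(\tilde{x}_i),
\]
with equality in the upper bound $L(\tilde{x})\le\sum_i\max L|_{P_i}$ if and only if each summand $\tilde{x}_i$ individually attains $\max L|_{P_i}$. This decomposition lemma is what drives both directions.

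For the forward direction, I would take $F\in\mathbf{F}\bigl(\sum_{i=1}^k P_i\bigr)$ and pick a linear functional $L=L_{\tilde{c}}$ whose maximum on $\sum_{i=1}^k P_i$ is attained exactly on $F$. Let $A_i := F_{P_i}(\tilde{c})\in\mathbf{F}(P_i)$ be the face on which $L|_{P_i}$ is maximized. By the equality case of the identity above, a point $\tilde{x}=\sum \tilde{x}_i$ lies in $F$ if and only if every $\tilde{x}_i$ lies in $A_i$; hence $F=\sum_{i=1}^k A_i$, and the same functional $L$ witnesses both (i) and (ii).

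For the backward direction, assume we are given $A_i\in\mathbf{F}(P_i)$ and a functional $L$ as in (ii). Using the identity once more, the maximum of $L$ on $\sum_i P_i$ equals $\sum_i \max L|_{P_i}$, and a point $\tilde{x}=\sum\tilde{x}_i$ achieves this maximum iff each $\tilde{x}_i\in A_i$. Thus the set of maximizers of $L$ on $\sum_i P_i$ is exactly $\sum_i A_i = F$, so $F$ is a face of $\sum_i P_i$. I do not anticipate any serious obstacle here; the only subtle point is justifying that the set of points of the form $\sum\tilde{x}_i$ with $\tilde{x}_i\in A_i$ coincides with the face of $\sum_i P_i$ cut out by $L$, and this follows directly from the equality condition in the maximization identity.
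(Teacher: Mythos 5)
Your proof is correct and follows essentially the same route as the paper: both exploit the additivity of a linear functional over a Minkowski sum, so that the maximum of $L$ on $\sum_{i=1}^k P_i$ equals $\sum_{i=1}^k \max(L|_{P_i})$ and is attained precisely when each summand maximizes $L|_{P_i}$, which forces $F=\sum_{i=1}^k A_i$. You simply spell out the converse direction that the paper dismisses as ``easier,'' so there is nothing to correct.
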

\begin{proof}
Clearly $F\in \mathbf{F}\left(\sum_{i=1}^k P_i\right)$ 
iff there is a linear functional $L$ on ${\reals}^n$ such 
that $L|_{\sum_{i=1}^k P_i}$ is maximized at $F$.
Let each $L|_{P_i}$ be maximized at $A_i\in \mathbf{F}(P_i)$. 
For $\sum_{i=1}^k x_i\in \sum_{i=1}^k P_i$ we then have
\[
L|_{\sum_{i=1}^k P_i}\left(\sum_{i=1}^k x_i\right)
= \sum_{i=1} L|_{P_i}(x_i)
\leq \sum_{i=1}^k\max(L|_{P_i}),
\]
and equality holds iff 
$L|_{P_i}(x_i) = \max(L|_{P_i})$ for each $i$,
which holds iff $x_i\in A_i$ for each $i$.
By definition of $F$ and $L$ this implies
that $F = \sum_{i=1}^k A_i$. The converse is easier.
\end{proof}
Note that the proof implies that
$\max(L|_{\sum_{i=1}^k P_i}) = \sum_{i=1}^k\max(L|_{P_i})$.

Assume now that $F = \sum_{i=1}^k A_i = \sum_{i=1}^k A_i'$ where 
$A_i ,A_i'\in \mathbf{F}(P_i)$ for each $i$, 
and where both tuples $(A_1,\ldots,A_k)$ and $(A_1',\ldots,A_k')$
are defined by the same functional $L$ and $L'$ 
respectively as in Lemma~\ref{lmm:sum-faces}.
If $(A_1,\ldots,A_k) \neq (A_1',\ldots,A_k')$, then
we may WLOG assume that 
$A_1'\setminus A_1 \neq \emptyset$. Then for 
$x_1'\in A_1'\setminus A_1$ and $x_i'\in A_i'$ for each $i\geq 2$ we have
\[
L|_{\sum_{i=1}^k P_i}\left(\sum_{i=1}^k x_i'\right)
= \sum_{i=1}^k L|_{P_i}(x_i')
< \max(L|_{P_1}) + \sum_{i=2}^k L|_{P_i}(x_i')
\leq \sum_{i=1}^k\max(L|_{P_i}).
\]
contradicting the fact that $\sum_{i=1}^kx_i' \in\sum_{i=1}^k A_i$ since
$L|_{\sum_{i=1}^k P_i}$ is maximized at $\sum_{i=1}^k A_i$.
Hence, we must have  $(A_1,\ldots,A_k) = (A_1',\ldots,A_k')$.
\begin{corollary}
\label{cor:AB=unique}
The decomposition of $F\in \mathbf{F}\left(\sum_{i=1}^k P_i\right)$ 
as $F = \sum_{i=1}^k A_i$ from Lemma~\ref{lmm:sum-faces} is unique.
\end{corollary}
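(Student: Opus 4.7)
The plan is to argue by contradiction in the style of the proof of Lemma~\ref{lmm:sum-faces} itself: given two decompositions $F = \sum_{i=1}^k A_i = \sum_{i=1}^k A_i'$ arising from linear functionals $L$ and $L'$ respectively (both as guaranteed by Lemma~\ref{lmm:sum-faces}), I would show that the two tuples $(A_1,\ldots,A_k)$ and $(A_1',\ldots,A_k')$ must agree coordinate by coordinate.

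First, I would reduce to the situation where some primed face strictly enlarges an unprimed one. Since the tuples differ, some $A_j \neq A_j'$ as subsets of $P_j$, so their symmetric difference is nonempty; after relabeling coordinates and, if necessary, swapping the roles of the two decompositions, I may assume $j=1$ and $A_1' \setminus A_1 \neq \emptyset$. Pick $x_1' \in A_1' \setminus A_1$ and arbitrary $x_i' \in A_i'$ for $i \geq 2$; then $\sum_i x_i' \in \sum_i A_i' = F$.

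Next comes the strict-inequality trick. Applying the functional $L$ (which defines the unprimed decomposition) to this sum, I would invoke $L(x_1') < \max(L|_{P_1})$ — a strict inequality because $A_1$ is exactly the locus where $L|_{P_1}$ achieves its maximum, so any point of $P_1$ outside $A_1$ falls short — together with $L(x_i') \leq \max(L|_{P_i})$ for $i \geq 2$. Summing yields $L\bigl(\sum_i x_i'\bigr) < \sum_i \max(L|_{P_i}) = \max(L|_{\sum_i P_i})$, which contradicts the fact that $L$ is maximized at $F$ and $\sum_i x_i'$ lies in $F$.

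The main (and only minor) obstacle is the reduction step: one must confirm that distinct tuples of faces force an entry of one tuple to contain a point missing from the corresponding entry of the other, which is immediate from set inequality once the roles of primed and unprimed are symmetrized. Everything else is a direct transcription of the inequality already established in the proof of Lemma~\ref{lmm:sum-faces}; indeed, essentially this computation appears in the paragraph preceding the corollary statement, and the corollary can be stated as its formal consequence.
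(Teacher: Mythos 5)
Your proposal is correct and follows essentially the same argument the paper gives in the paragraph immediately preceding the corollary: pick a point $x_1'\in A_1'\setminus A_1$ (after relabeling and symmetrizing the roles of the two decompositions), apply the functional $L$ defining the unprimed tuple, and derive the strict inequality $L\bigl(\sum_i x_i'\bigr) < \sum_i \max(L|_{P_i})$, contradicting that $\sum_i x_i'$ lies in $F$ where $L$ is maximized. No gaps; this matches the paper's proof.
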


Consider for a moment a functional $L_{\tilde{c}}$ where $c_1 > \cdots > c_n$.
Clearly $L_{\tilde{c}}$ restricted to $\Pi_{n-1}(k-1)$ as defined
in (\ref{eqn:family}) will yield the vertex $(1,0,\ldots,0)$
of $\Delta_F$ from all the $\binom{n-1}{k-1}$ subsets $F\subseteq [n]$
with $1\in F$. Further, $L_{\tilde{c}}$ will yield the vertex
$(0,1,0,\ldots,0)$ of $\Delta_F$ from all the $\binom{n-2}{k-1}$
subsets of $F\subseteq [n]$ with $1\not\in F$ and $2\in F$ and so on.
Hence, by (\ref{eqn:family}) and Lemma~\ref{lmm:sum-faces} 
we see that $L_{\tilde{c}}$ will yield
the unique vertex $\left(\binom{n-1}{k-1},\binom{n-2}{k-2},\ldots,
\binom{k-1}{k-1},0,\ldots,0\right)$ of $\Pi_{n-1}(k-1)$. 
By considering all permutations on $n$ indices, we therefore have 
the following proposition.
\begin{proposition}
\label{prp:vert}
Every vertex of $\Pi_{n-1}(k-1)$ has form $\tilde{u} = (u_1,\ldots,u_n)$
where the support is given by
\[
\supp(\tilde{u}) = \{u_1,\ldots,u_n\} 
= \left\{\binom{n-1}{k-1},\binom{n-2}{k-1},\ldots,
\binom{k-1}{k-1},0\right\}.
\]
There are exactly $k-1$ copies of $0$ among $u_1,\ldots,u_n$ and hence exactly
one copy of each nonzero integer from the above set. In particular 
$\Pi_{n-1}(k-1)$ has exactly $\frac{n!}{(k-1)!}$ vertices. 
\end{proposition}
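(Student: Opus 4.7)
The plan is to combine Lemma~\ref{lmm:sum-faces} with the decomposition $\Pi_{n-1}(k-1)=\sum_{|F|=k}\Delta_F$ and a careful analysis of which vertex of each $\Delta_F$ is selected by a generic linear functional. By Lemma~\ref{lmm:sum-faces}, every face of $\Pi_{n-1}(k-1)$, including every vertex, is $\sum_{|F|=k}A_F$ for a choice of faces $A_F\in\mathbf{F}(\Delta_F)$ determined by a common functional $L_{\tilde{c}}$. Since every vertex has a full-dimensional normal cone, we may restrict attention to strict functionals $L_{\tilde{c}}$ with pairwise distinct coordinates; every vertex is produced by some such functional.

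So fix a permutation $\pi\in S_n$ and consider $\tilde{c}$ with $c_{\pi(1)}>c_{\pi(2)}>\cdots >c_{\pi(n)}$. For each $F\subseteq[n]$ with $|F|=k$, the restriction $L_{\tilde{c}}|_{\Delta_F}$ is maximized at the unique standard basis vector $e_{\pi(m)}$ where $m=\min\{i:\pi(i)\in F\}$. Summing these contributions as in (\ref{eqn:Minks-basic}), the $\pi(m)$-th coordinate of the resulting vertex counts the number of $k$-subsets $F\subseteq[n]$ whose $\pi$-maximal element is $\pi(m)$, which equals $\binom{n-m}{k-1}$ (we must choose the remaining $k-1$ elements from $\{\pi(m+1),\ldots,\pi(n)\}$). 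Thus the vertex produced is $\tilde{u}$ with $u_{\pi(m)}=\binom{n-m}{k-1}$ for $m=1,\ldots,n$. As $m$ ranges over $\{1,\ldots,n\}$ the values $\binom{n-m}{k-1}$ run through $\binom{n-1}{k-1},\binom{n-2}{k-1},\ldots,\binom{k-1}{k-1}$ and then $\binom{k-2}{k-1}=\cdots=\binom{0}{k-1}=0$, giving exactly $k-1$ zeros and one copy of each positive value in the stated set.

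For the count, observe that two strict orderings $\pi,\pi'$ produce the same vertex if and only if they agree on the top $n-k+1$ positions and differ only by permuting the bottom $k-1$ positions (all of which receive the value $0$). Hence the number of distinct vertices obtained is $n!/(k-1)!$. Corollary~\ref{cor:AB=unique} guarantees that the decomposition we used is unique, so no vertex is double-counted within a single ordering.

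The only delicate point is the claim that every vertex of $\Pi_{n-1}(k-1)$ arises from some strict functional $L_{\tilde{c}}$. I would justify this by noting that the normal cone at any vertex of a polytope in $\mathbb{R}^n$ is an $n$-dimensional cone and therefore meets the open dense set of vectors with pairwise distinct coordinates; hence every vertex is indeed produced by some such generic $\tilde{c}$. With this in hand the above enumeration exhausts $\mathbf{F}_0(\Pi_{n-1}(k-1))$, completing the proof.
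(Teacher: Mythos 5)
Your proof is correct and follows essentially the same route as the paper: apply Lemma~\ref{lmm:sum-faces} to a functional $L_{\tilde{c}}$ with strictly decreasing (permuted) coordinates, observe that each $\Delta_F$ contributes the basis vector of its $\tilde{c}$-largest index so that the coordinate of $\pi(m)$ equals $\binom{n-m}{k-1}$, and then let $\pi$ range over $S_n$ to get all vertices and the count $n!/(k-1)!$. The only difference is that you make explicit two points the paper leaves implicit, namely that every vertex is attained by some functional with pairwise distinct coordinates (via the full-dimensional normal cone) and the exact fibre of the map from orderings to vertices; both justifications are sound.
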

From Proposition~\ref{prp:vert} here above we see that
$\Pi_{n-1}(k-1)$ is a degenerate case of the polytope 
$P_{n-1}(\tilde{v})$ from Postnikov~\cite{Postnikov}, defined 
as the convex hull of $\{(v_{\pi(1)},v_{\pi(2)},\ldots,v_{\pi(n)}) : \pi\in S_n\}$
for a fixed vector $\tilde{v}\in{\reals}^n$.
Namely, $\Pi_{n-1}(k-1) = P_{n-1}(\tilde{v})$ where 
$\tilde{v} = (v_1,\ldots,v_n) =
(\binom{n-1}{k-1},\binom{n-2}{k-1},\ldots,\binom{k-1}{k-1},0,\ldots,0)$.
The combinatorial type of $P_n(\tilde{v})$, for any 
$\tilde{v}\in{\reals}^n$
with $v_1 = v_2 = \cdots = v_{k-1} < v_k < \cdots < v_n$, is the same as that
of $\Pi_{n-1}(k-1)$ (i.e.~they have isomorphic face lattices)
so, in particular, the combinatorial type of $P_n(\tilde{v})$ 
when all the $v_i$ are distinct, is the same as that of the
standard permutahedron $\Pi_{n-1}$. In~\cite{Postnikov} the volume
$P_n(\tilde{v})$ is studied extensively, and it is shown to
be a polynomial in the variables $v_1,\ldots,v_n$.

Note that every functional $L_{\tilde{c}}$ for which
the coordinates $c_1,\ldots,c_n$ of $\tilde{c}$ are distinct will always 
yield a vertex of $\Pi_{n-1}(k-1)$, but not vice versa when $k\geq 3$.
For such a $\tilde{c}$ we can, as right before Observation~\ref{obs:coarse},
``merge'' two consecutive values the support of $\tilde{c}$ 
(i.e.~replace both values by their average, say) and thereby 
obtain the unique edge of $\Pi_{n-1}(k-1)$, the endvertices
of which form the max-set of this altered $L_{\tilde{c}}$.
Note that the edges of $\Pi_{n-1}(k-1)$ are of two types or kinds:
1st kind having $k-1$ zeros among the coordinates of each generic
point on the edge (i.e.~edge points excluding the endpoints), 
and the 2nd kind with $k-2$ zeros among the coordinates of each
generic point on the edge. The number $e_1$ of edges of the 1st kind
is the same as the number of ordered partition of a chosen $(n-k+1)$-subset 
of $[n]$ into $n-k$ parts, and hence 
\[
e_1 = \binom{n}{n-k+1}\stirling{n-k+1}{n-k}(n-k)! 
= \binom{n}{n-k+1}\binom{n-k+1}{2}(n-k)!
= \frac{(n-k)n!}{2(k-1)!}.
\]
The number $e_2$ of edges of the 2nd kind is the same as the number ways to
choose an $(n-k+2)$-subset of $[n]$, partition it into $n-k+1$ parts and order
the $n-k$ singletons of those parts, and so
\[
e_2 = \binom{n}{n-k+2}\stirling{n-k+2}{n-k+1}(n-k)! 
= \binom{n}{n-k+2}\binom{n-k+2}{2}(n-k)!
= \frac{n!}{2(k-2)!}.
\]
Hence, the total number of edges is given by 
$e_1 + e_2 = \frac{(n-1)n!}{2(k-1)!}$. We summarize in the following.
\begin{proposition}
\label{prp:v-e-simple}
Every edge of $\Pi_{n-1}(k-1)$ is between a pair of vertices as given
in Proposition~\ref{prp:vert}
that differ in exactly two coordinates whose values are consecutive
in the support of the vertices. Consequently the edges are of
two types: (i) edges between two vertices, both with the same
$k-1$ zero coordinates, and (ii) edges between two vertices, both with
the same $k-2$ zero coordinates and one with its least nonzero entry
where the other vertex has a zero. In particular, the number of edges 
of $\Pi_{n-1}(k-1)$ is $\frac{(n-1)n!}{2(k-1)!}$ and so 
$\Pi_{n-1}(k-1)$ is a simple polytope for all $k$ and $n$.
\end{proposition}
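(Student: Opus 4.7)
The plan is to analyze all edges incident to one reference vertex of $\Pi_{n-1}(k-1)$ and then invoke the coordinate-permutation symmetry of $\Pi_{n-1}(k-1)$ to reach every edge. Fix the reference vertex
\[
v = \Bigl(\binom{n-1}{k-1},\binom{n-2}{k-1},\ldots,\binom{k-1}{k-1},0,\ldots,0\Bigr)
\]
from Proposition~\ref{prp:vert}. The computation preceding that proposition shows that $F_{\Pi_{n-1}(k-1)}(\tilde{c})=v$ exactly when $c_1>c_2>\cdots>c_{n-k+1}$ and $c_{n-k+1}>c_l$ for every $l\in\{n-k+2,\ldots,n\}$; the ordering among $c_{n-k+2},\ldots,c_n$ plays no role. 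The edges of $\Pi_{n-1}(k-1)$ incident to $v$ are then in bijection with the $(n-k)+(k-1)=n-1$ ways to relax exactly one of these strict inequalities to equality, which already gives $n-1$ distinct edges at $v$ and hence simplicity.

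For each such relaxation I would apply Lemma~\ref{lmm:sum-faces} to decompose the resulting $1$-face as a Minkowski sum of faces of the summand simplices $\Delta_F$. When $c_h=c_{h+1}$ with $h\le n-k$, the restriction $L_{\tilde{c}}|_{\Delta_F}$ is maximized at $e_{\min F}$ unless $\min F=h$ and $h+1\in F$, in which case it is maximized on the edge $\conv(e_h,e_{h+1})$; the Minkowski sum of these $\binom{n-h-1}{k-2}$ parallel edge segments, together with the constant vertex contributions from the remaining $\Delta_F$'s, is an edge of $\Pi_{n-1}(k-1)$ from $v$ to the vertex obtained from $v$ by swapping coordinates $h$ and $h+1$ (Pascal's identity $\binom{n-h}{k-1}=\binom{n-h-1}{k-1}+\binom{n-h-1}{k-2}$ accounts for the arithmetic), which is an edge of the first kind. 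When $c_{n-k+1}=c_l$ with $l\in\{n-k+2,\ldots,n\}$, the restriction of $L_{\tilde{c}}$ to $\Delta_F$ is maximized on a proper edge only if $\min F=n-k+1$ and $l\in F$; since $|F|=k$ this forces the unique choice $F=\{n-k+1,\ldots,n\}$, which contributes a single $\conv(e_{n-k+1},e_l)$ while all other simplices contribute single vertices. The Minkowski sum is then the edge of $\Pi_{n-1}(k-1)$ from $v$ to the vertex obtained by swapping the value $\binom{k-1}{k-1}=1$ at position $n-k+1$ with the zero at position $l$, an edge of the second kind.

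Finally, since $\Pi_{n-1}(k-1)$ is invariant under every coordinate permutation, every vertex has exactly the local edge structure of $v$, namely $n-k$ incident first-kind edges and $k-1$ incident second-kind edges, which by a handshake count with $|\mathbf{F}_0(\Pi_{n-1}(k-1))|=n!/(k-1)!$ gives
\[
e_1=\frac{(n-k)\,n!}{2\,(k-1)!}, \qquad e_2=\frac{(k-1)\,n!}{2\,(k-1)!}=\frac{n!}{2\,(k-2)!},
\]
for a total of $\frac{(n-1)\,n!}{2\,(k-1)!}$ edges, and simplicity follows because each vertex has $\dim\Pi_{n-1}(k-1)=n-1$ incident edges. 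The main obstacle will be the bookkeeping in the second case: one must recognize that out of all $\binom{n}{k}$ constituent simplices precisely the single simplex supported on $\{n-k+1,\ldots,n\}$ contributes a non-degenerate edge summand, which is both needed to identify the correct endpoint of the resulting type-two edge and to confirm that its direction is $e_{n-k+1}-e_l$ with unit length.
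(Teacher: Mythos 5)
Your argument is correct, but it reaches the count by a genuinely different route than the paper. The paper works globally: it observes that edges arise by merging two consecutive values in the support of a defining functional, splits the edges into the two kinds by the number of zeros at a generic edge point, and then counts each kind directly by a bijection with combinatorial data (choose an $(n-k+1)$-subset and order-partition it into $n-k$ parts for the first kind, respectively an $(n-k+2)$-subset partitioned into $n-k+1$ parts with the singletons ordered for the second kind), obtaining $e_1=\binom{n}{n-k+1}\binom{n-k+1}{2}(n-k)!$ and $e_2=\binom{n}{n-k+2}\binom{n-k+2}{2}(n-k)!$; simplicity is then read off from the total matching $\tfrac{(n-1)}{2}\cdot\tfrac{n!}{(k-1)!}$. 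You instead work locally at the distinguished vertex: you identify its normal cone by the $n-1$ inequalities $c_1>\cdots>c_{n-k+1}>c_l$, show via Lemma~\ref{lmm:sum-faces} that each single relaxation produces an honest edge with an explicitly identified second endpoint (the Pascal-identity bookkeeping for the first kind, the unique summand $\Delta_{\{n-k+1,\ldots,n\}}$ for the second), and then transport this local picture to every vertex by coordinate-permutation symmetry and finish with a handshake count. Your version buys the explicit local data ($n-k$ edges of kind (i) and $k-1$ of kind (ii) at each vertex, with edge directions $e_h-e_{h+1}$ and $e_{n-k+1}-e_l$), which also gives the first sentence of the proposition more directly; the paper's version avoids the symmetry/handshake step and its ordered-partition bookkeeping foreshadows the OPP machinery of Theorem~\ref{thm:face-gen-perm}. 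One step you should make explicit, since simplicity is the upper bound on vertex degree rather than the lower one: every edge through $v$ is $F(\tilde{c})$ for some $\tilde{c}$ in the closed cone with at least one of the $n-1$ inequalities tight, and any tight inequality already forces the corresponding swapped vertex into $F(\tilde{c})$, so an edge through $v$ must coincide with one of your $n-1$ listed segments; this one-line argument turns your asserted bijection into a proof that $v$ has exactly $n-1$ neighbors.
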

By the above Proposition~\ref{prp:v-e-simple} every $\Pi_{n-1}(k-1)$ 
is simple, so the sequence
\[
\Delta_{n-1} = \Pi_{n-1}(n-1), \Pi_{n-1}(n-2), \ldots, \Pi_{n-1}(2), 
\Pi_{n-1}(1) = \Pi_{n-1},
\]
can be viewed as discrete  
transition of simple polytopes from the  simplex $\Delta_{n-1}$ to
the standard permutahedron $\Pi_{n-1}$, see Figure~\ref{fig:chain}.
This is our first main reason to focus our study on
the PI-family consisting of $\Pi_{n-1}(k-1)$ where $k=2,\ldots,n-1$.
\begin{figure}
\begin{center}
\begin{tikzpicture}[scale=1]
\node (Pi31) at (-4,0) {\includegraphics{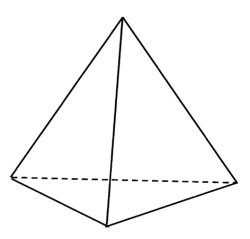}};
\node at (-4,-2) {$\Pi_3(3)$};
\node (Pi32) at (0,0) {\includegraphics[scale=.15]{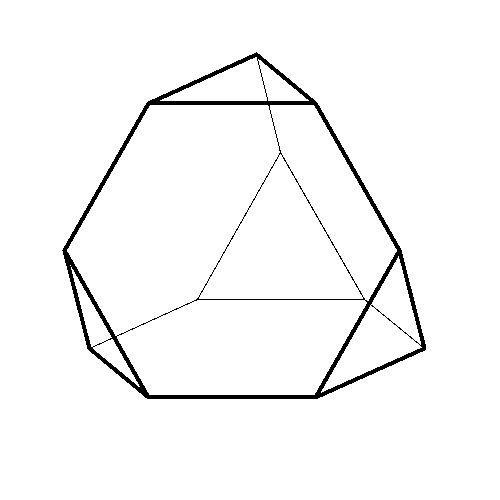}};
\node at (0,-2) {$\Pi_3(2)$};
\node (Pi33) at (4,0) {\includegraphics{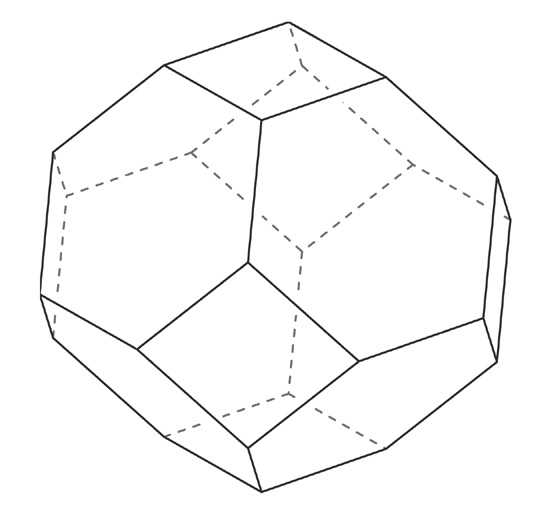}};
\node at (4,-2) {$\Pi_3(1)$};
\end{tikzpicture}
\end{center}
\caption{A simple transition between $\Delta_3$ and $\Pi_3$ in the case $n=4$.}
\label{fig:chain}
\end{figure}
\section{Comparing various types of generalizations of  permutahedra}
\label{sec:gen-perm}

In this section we further promote the importance of the PI-family
${\cal{P}}_n = \{\Pi_{n-1}(k-1)\}_{k = 2,\ldots,n}$ and briefly 
compare various families of polytopes from
the literature, all generalizing the standard permutahedron in one 
form or another. We present some explicit characterizations
of them and show that the PI-family ${\cal{P}}_n$ forms a Minkowski
${\ints}^{+}$-basis for a large family of polytopes that generalizes
the standard permutahedron.

There are, needless to say, many ways to generalize the standard permutahedron
$\Pi_{n-1}$, and we have briefly mentioned two of them (namely, $\Pi_{n-1}(k-1)$
and $P_{n-1}(\tilde{v})$ from above). There are two other classes of 
important families of polytopes from~\cite{Postnikov} and 
from~\cite{Post-Vic-Laur} we want to relate $\Pi_{n-1}(k-1)$ to.
A good portion of the discussion immediately here below in this section,
is, in one form or another, contained in~\cite{Postnikov}
and~\cite{Rado}, except for some
minor observations, propositions, and examples toward the end of this section. 
We include it all in this short section though as it serves as a
second main reason for our investigation, as well as for 
self-containment of the article.

{\sc First Class:} 
For a collection $\tilde{Y} = \{y_I\}_{I\subseteq [n]}$ of non-negative 
real numbers $y_I\geq 0$ for each $I\subseteq [n]$, one can define a 
$P_{n-1}(\tilde{Y})$ as the Minkowski sum of  simplices 
$\Delta_I$ scaled by $y_I$
\[
P_{n-1}(\tilde{Y}) := \sum_{I\subseteq [n]}y_I\Delta_I,
\]
and is referred to as a {\em generalized permutohedron} in~\cite{Postnikov}.
Apriori this seems to be more general than the Minkowski sum in 
(\ref{eqn:Minks-basic}). However, if we consider a family ${\cal{F}}$ 
of subsets of $[n]$ containing
(possibly) multiple copies of subsets of $[n]$, then $P_{\cal{F}}$ 
from (\ref{eqn:Minks-basic}) can be written as 
\[
P_{\cal{F}} = \sum_{I\subseteq [n]}n_I\Delta_I,
\]
which can have the same combinatorial type as any $P_{n-1}(\tilde{Y})$. 
The class $P_{n-1}(\tilde{Y})$ (and hence also the class $P_{\cal{F}}$,) 
includes numerous classes of polytopes with
highly interesting combinatorial structures, like the {\em associahedron},
the {\em cyclohedron}, etc. (see~\cite{Postnikov} for many more examples.)  

{\sc Second Class:}
For a collection $\tilde{Z} = \{z_I\}_{I\subseteq [n]}$ of non-negative 
real numbers
$z_i\geq 0$ for each $I\subseteq [n]$, one can define $P_{n-1}(\tilde{Z})$
by its bounding hyperplanes
\[
P_{n-1}(\tilde{Z}) := \left\{ \tilde{x}\in{\reals}^n : 
\sum_{i\in[n]} x_i = z_{[n]}, \ \sum_{i\in I} x_i\leq z_I 
\mbox{ for }I\subset [n]\right\},
\]
which is also refereed to as the generalized permutohedron 
in~\cite{Postnikov}.
The following is a theorem of Rado~\cite{Rado}.
\begin{theorem}
\label{thm:Rado}
The polytope $P_{n-1}(\tilde{v})$ where
the coordinates are ordered $v_1\geq \cdots\geq v_n$, can be presented as those
$\tilde{t}\in{\reals}^n$ satisfying $\sum_{i\in[n]}t_i = \sum_{i\in[n]}v_i$ and
$\sum_{i\in I}t_i \leq \sum_{i\in[|I|]}v_i$ for each $I\subseteq [n]$.
\end{theorem}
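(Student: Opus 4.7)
The plan is to prove the two inclusions separately, writing $Q$ for the polyhedron defined on the right-hand side and $P = P_{n-1}(\tilde{v})$ for the convex hull on the left.

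For the easy direction $P \subseteq Q$, I would just check the defining constraints on each vertex. Every vertex of $P$ has the form $\tilde{t} = (v_{\pi(1)}, \ldots, v_{\pi(n)})$ for some $\pi \in S_n$, so $\sum_{i \in [n]} t_i = \sum_{i \in [n]} v_i$ is automatic. For a subset $I \subseteq [n]$, the sum $\sum_{i \in I} t_i$ equals the sum of some $|I|$-element multiset of entries of $\tilde{v}$; since the $v_i$ are in non-increasing order, the largest such sum is obtained by taking $v_1, \ldots, v_{|I|}$, whence $\sum_{i \in I} t_i \leq \sum_{i \in [|I|]} v_i$. These inequalities are linear, so they persist on the convex hull.

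For the reverse inclusion $Q \subseteq P$, the cleanest route is to argue contrapositively via separation. Suppose $\tilde{t} \in Q \setminus P$; then since $P$ is a closed convex set, there exists a linear functional $L_{\tilde{c}}$ with $L_{\tilde{c}}(\tilde{t}) > \max_{\tilde{x} \in P} L_{\tilde{c}}(\tilde{x})$. After permuting coordinates (which permutes the defining inequalities of $Q$ symmetrically and preserves $P$), I may assume $c_1 \geq c_2 \geq \cdots \geq c_n$. By the rearrangement inequality, $\max_{\tilde{x} \in P} L_{\tilde{c}}(\tilde{x}) = \sum_{i=1}^n c_i v_i$, attained at the identity permutation. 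Also, by the rearrangement inequality applied to $\tilde{t}$, $L_{\tilde{c}}(\tilde{t}) \leq L_{\tilde{c}}(\tilde{t}^{\downarrow})$ where $\tilde{t}^{\downarrow}$ is $\tilde{t}$ sorted in non-increasing order; and $\tilde{t}^{\downarrow}$ still lies in $Q$ since the constraints of $Q$ are symmetric over all $I \subseteq [n]$. So I may additionally assume $t_1 \geq t_2 \geq \cdots \geq t_n$.

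Now apply Abel summation (summation by parts):
\[
\sum_{i=1}^n c_i t_i
= \sum_{k=1}^{n-1} (c_k - c_{k+1})\left(\sum_{i=1}^k t_i\right) + c_n \sum_{i=1}^n t_i.
\]
Each factor $c_k - c_{k+1}$ is non-negative, and by the constraint of $Q$ applied to $I = \{1, \ldots, k\}$, $\sum_{i=1}^k t_i \leq \sum_{i=1}^k v_i$; moreover $\sum_{i=1}^n t_i = \sum_{i=1}^n v_i$ by the equality constraint. Substituting termwise and running the Abel identity backwards yields $L_{\tilde{c}}(\tilde{t}) \leq \sum_{i=1}^n c_i v_i$, contradicting the choice of $\tilde{c}$. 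Hence $Q \subseteq P$. The main subtlety, and the step I would be most careful about, is the double reduction to sorted data in both $\tilde{c}$ and $\tilde{t}$; once that symmetrization is carried out, the Abel summation step is a one-liner.
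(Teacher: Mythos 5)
Your proof is correct. Note that the paper itself does not prove this statement at all: it is quoted as a theorem of Rado and attributed to the reference \cite{Rado}, so there is no in-paper argument to compare against. Your argument is the standard (and essentially Rado's original) majorization-style proof: the inclusion $P\subseteq Q$ by checking the symmetric linear constraints on the vertices $(v_{\pi(1)},\ldots,v_{\pi(n)})$, and the inclusion $Q\subseteq P$ by separation, using the invariance of both $P$ and $Q$ under coordinate permutations to sort $\tilde{c}$ and $\tilde{t}$ (via the rearrangement inequality, which also identifies $\max_P L_{\tilde{c}}=\sum_i c_iv_i$), and then Abel summation with the nonnegative gaps $c_k-c_{k+1}$ against the partial-sum constraints $\sum_{i\le k}t_i\le\sum_{i\le k}v_i$ and the total-sum equality. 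All the steps you flag as delicate do go through: permuting coordinates maps $Q$ to itself because the right-hand sides of its inequalities depend only on $|I|$, and it maps $P$ to itself (this is Observation~\ref{obs:perm-face} in the paper), so the double reduction to sorted $\tilde{c}$ and $\tilde{t}$ preserves both the membership $\tilde{t}\in Q$ and the strict separation, and the final backward Abel step is a genuine one-liner. The only cosmetic remark is that the contrapositive detour is not needed: your Abel computation shows directly that every $\tilde{t}\in Q$ satisfies $L_{\tilde{c}}(\tilde{t})\le\max_P L_{\tilde{c}}$ for every $\tilde{c}$, which already gives $Q\subseteq P$ since $P$ is the intersection of the half-spaces $\{L_{\tilde{c}}\le\max_P L_{\tilde{c}}\}$; but as written the argument is complete.
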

From Theorem~\ref{thm:Rado} we see that if $z_I = z_J$ whenever $|I| = |J|$, 
then there are uniquely determined $v_1,\ldots,v_n$ such that 
$P_{n-1}(\tilde{Z}) = P_{n-1}(\tilde{v})$. Therefore, the class of 
$P_{n-1}(\tilde{Z})$ polytopes strictly includes all polytopes 
$P_{n-1}(\tilde{v})$.

Further, the following is from~\cite{Postnikov}.
\begin{proposition}
\label{prp:Postnikov}
For a given collection $\tilde{Y} = \{y_I\}_{I\subseteq [n]}$ of 
non-negative real numbers $y_I\geq 0$, then
$P_{n-1}(\tilde{Y}) = P_{n-1}(\tilde{Z})$ where $z_I = \sum_{J\subseteq I}y_J$
for each $I\subseteq [n]$. 
\end{proposition}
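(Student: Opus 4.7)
The plan is to identify the two polytopes by computing, for every $S\subseteq[n]$, the extreme values of the linear functional $\tilde{x}\mapsto\sum_{i\in S} x_i$ on both sides. On a single simplex $\Delta_I$, a direct inspection of the vertices shows that $\min_{\tilde u\in\Delta_I}\sum_{i\in S}u_i$ equals $1$ when $I\subseteq S$ (every vertex of $\Delta_I$ has its unique nonzero coordinate inside $S$) and equals $0$ otherwise (one concentrates all mass on a vertex corresponding to some $j\in I\setminus S$). Since extremal values of a linear functional add under Minkowski summation, this yields
\[
\min_{\tilde x\in P_{n-1}(\tilde Y)}\sum_{i\in S}x_i \;=\; \sum_{I\subseteq S}y_I \;=\; z_S,
\]
and the case $S=[n]$ produces the hyperplane identity $\sum_i x_i = z_{[n]}$ on $P_{n-1}(\tilde Y)$. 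This already shows that every point of $P_{n-1}(\tilde Y)$ satisfies the defining (in)equalities of $P_{n-1}(\tilde Z)$ (with the inequality direction interpreted in the standard generalized-permutohedron convention).

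The main obstacle is the reverse inclusion, i.e.\ showing that these inequalities in fact cut out $P_{n-1}(\tilde Y)$ and no more. My plan is to invoke Lemma~\ref{lmm:sum-faces} together with Corollary~\ref{cor:AB=unique}: every facet of $P_{n-1}(\tilde Y)$ arises from a single linear functional that simultaneously attains its maximum on a facet of each summand $\Delta_I$, and because the facets of $\Delta_I$ are exactly the coordinate sections $\{x_j=0\}\cap\Delta_I$ for $j\in I$, tracking the set $T$ of coordinates simultaneously switched off across all the summands shows that the outward normal of any facet of $P_{n-1}(\tilde Y)$ is, up to a positive scalar, a $0$--$1$ indicator vector supported on some $T\subsetneq[n]$. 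Combined with the minimum-value computation above, this forces $P_{n-1}(\tilde Y)=P_{n-1}(\tilde Z)$.

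If the facet-normal classification proves delicate to make airtight, a cleaner alternative is a direct vertex match. For a generic $L_{\tilde c}$ with $c_{\pi(1)}>\cdots>c_{\pi(n)}$, Lemma~\ref{lmm:sum-faces} identifies the unique $L_{\tilde c}$-maximizing vertex of $P_{n-1}(\tilde Y)$ as the sum, over the summands, of the $L_{\tilde c}$-maximizing vertex of each $\Delta_I$ (namely the standard basis vector supported at the $c$-largest element of $I$); a short rearrangement expresses its $\pi$-sorted coordinates as the consecutive differences of the prefix $z$-values $z_{\{\pi(1),\ldots,\pi(j)\}}$, which by Rado's Theorem~\ref{thm:Rado} are exactly the coordinates of the corresponding $L_{\tilde c}$-maximizing vertex on $P_{n-1}(\tilde Z)$. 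Since both polytopes are the convex hulls of their generic-$L_{\tilde c}$-maximizing vertices, the polytopes coincide.
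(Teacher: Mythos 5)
First, a point of comparison: the paper offers no proof of this proposition at all --- it is imported from Postnikov with a citation --- so your argument has to stand entirely on its own. Your first half does: the minimum of $\sum_{i\in S}x_i$ over $\Delta_I$ is $1$ if $I\subseteq S$ and $0$ otherwise, minima of linear functionals add over Minkowski sums, so every point of $P_{n-1}(\tilde{Y})$ satisfies $\sum_{i\in[n]}x_i=z_{[n]}$ and $\sum_{i\in S}x_i\geq z_S$. But you should confront the convention issue head-on instead of appealing to ``the standard convention'': with $z_I=\sum_{J\subseteq I}y_J$ the inequalities must be $\geq$ (the paper's displayed definition of $P_{n-1}(\tilde{Z})$ has $\leq$, which with this $z$ already gives the empty set when $\tilde{Y}$ is concentrated on $I=[n]$); equivalently one keeps $\leq$ and takes $z_I=\sum_{J\cap I\neq\emptyset}y_J$. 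That is a slip in the paper, but a proof must commit to one consistent reading, and your later steps show the two conventions getting mixed.

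The genuine gaps are in the reverse inclusion. The facet route is wrong as sketched: by Lemma~\ref{lmm:sum-faces} a facet of the sum decomposes into the maximizing faces of the summands, and these are typically vertices or even the whole simplex, not facets of the summands (already for $\sum_{|F|=2}\Delta_F$ with $n=3$), so ``simultaneously attains its maximum on a facet of each summand'' fails and the indicator-normal classification would need the normal-fan/common-refinement argument you do not give. In the vertex route there are two problems. (i) With $z_I=\sum_{J\subseteq I}y_J$ and $c_{\pi(1)}>\cdots>c_{\pi(n)}$, the $L_{\tilde{c}}$-maximizing vertex of $P_{n-1}(\tilde{Y})$ has $\pi(r)$-coordinate $\sum\{y_J:\pi(r)\in J\subseteq T_r\}=z_{T_r}-z_{T_{r+1}}$ with $T_r=\{\pi(r),\ldots,\pi(n)\}$, i.e.\ consecutive differences along the \emph{suffix} sets; your prefix differences $z_{\{\pi(1),\ldots,\pi(j)\}}-z_{\{\pi(1),\ldots,\pi(j-1)\}}$ give the \emph{minimizing} vertex --- fixable, but symptomatic of the convention mix-up. (ii) More seriously, Theorem~\ref{thm:Rado} describes the orbit polytope $P_{n-1}(\tilde{v})$ only; it says nothing about which point of a general inequality-defined $P_{n-1}(\tilde{Z})$ maximizes $L_{\tilde{c}}$. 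Identifying that maximizer is exactly the greedy/polymatroid step where the hypothesis $y_I\geq0$ (supermodularity of $z$) must be used --- e.g.\ by writing the sorted $\tilde{c}$ as a non-negative combination of indicators of the sets $T_r$ plus a multiple of $(1,\ldots,1)$ and checking that the resulting upper bound on $L_{\tilde{c}}$ over $P_{n-1}(\tilde{Z})$ is attained at the Minkowski-sum vertex --- and this step is missing. Without it, ``both polytopes are the convex hulls of their generic maximizers'' does not close the argument on the $P_{n-1}(\tilde{Z})$ side.
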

Hence, the the class of $P_{n-1}(\tilde{Z})$ polytopes 
also includes the class of all $P_{n-1}(\tilde{Y})$ polytopes. 
By Observation~\ref{obs:Z=Y} here below, this mentioned inclusion is 
also strict.

Suppose it is known that $P_{n-1}(\tilde{Z}) = P_{n-1}(\tilde{Y})$ for some 
$\tilde{Y} = \{y_I\}_{I\subseteq [n]}$. Then we may assume that 
$z_I = \sum_{J\subseteq I}y_J$ for each $I\subseteq [n]$. 
By a M\"{o}bius inversion we then get 
$y_I = \sum_{J\subseteq I}(-1)^{|I|-|J|}z_J$ for each $I\subseteq [n]$, so
the $y_I$ are uniquely determined in terms of the $z_I$. Hence we have 
the following.
\begin{observation}
\label{obs:Z=Y}
For $n\in\nats$ and a collection 
$\tilde{Z} = \{z_I\}_{I\subseteq [n]}$ of non-negative real numbers, 
then $P_{n-1}(\tilde{Z}) = P_{n-1}(\tilde{Y})$ if and only if 
$y_I = \sum_{J\subseteq I}(-1)^{|I|-|J|}z_J\geq 0$ for each $I\subseteq [n]$.
\end{observation}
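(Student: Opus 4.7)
The plan is to read the statement as a clean application of Möbius inversion on the Boolean lattice $2^{[n]}$ together with Proposition~\ref{prp:Postnikov}, noting that the paragraph preceding the observation already contains the essential uniqueness argument for the forward direction. The two families of coefficients $\{y_I\}$ and $\{z_I\}$ are related by $z_I = \sum_{J\subseteq I} y_J$, whose Möbius inverse is precisely $y_I = \sum_{J\subseteq I}(-1)^{|I|-|J|} z_J$. Both directions of the equivalence then reduce to checking that this specific candidate $\tilde{Y}$ satisfies the non-negativity hypothesis required to define $P_{n-1}(\tilde{Y})$ in the first place.

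For the forward direction I would begin by assuming $P_{n-1}(\tilde{Z}) = P_{n-1}(\tilde{Y})$ for some collection $\tilde{Y} = \{y_I\}_{I\subseteq [n]}$ of non-negative reals. By Proposition~\ref{prp:Postnikov} we may take $z_I = \sum_{J\subseteq I} y_J$ for all $I\subseteq [n]$. Applying Möbius inversion on the Boolean lattice (where the Möbius function is $\mu(J,I) = (-1)^{|I|-|J|}$ for $J\subseteq I$) inverts this relation uniquely to give $y_I = \sum_{J\subseteq I}(-1)^{|I|-|J|}z_J$, and these are non-negative because the original $y_I$ were.

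For the reverse direction I would define $y_I := \sum_{J\subseteq I}(-1)^{|I|-|J|}z_J$ and assume by hypothesis that each $y_I \geq 0$, so that the collection $\tilde{Y} = \{y_I\}$ is admissible in the definition of $P_{n-1}(\tilde{Y})$. Möbius inversion (now applied in the other direction) gives $\sum_{J\subseteq I} y_J = z_I$, and Proposition~\ref{prp:Postnikov} immediately yields $P_{n-1}(\tilde{Y}) = P_{n-1}(\tilde{Z})$.

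There is no real obstacle here beyond citing Möbius inversion correctly; the only point where one must be careful is the logical flow of the forward direction, namely that the $y_I$ appearing in the hypothesis are, by the uniqueness coming from Möbius inversion, forced to agree with the explicit formula in the conclusion, and hence inherit non-negativity from the assumed $\tilde{Y}$. This uniqueness step is exactly what the paragraph preceding the observation establishes, so the proof amounts to assembling those ingredients.
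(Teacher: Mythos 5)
Your proposal is correct and follows essentially the same route as the paper: the paragraph preceding the observation is exactly this argument, invoking Proposition~\ref{prp:Postnikov} to write $z_I=\sum_{J\subseteq I}y_J$ and then M\"{o}bius inversion on the Boolean lattice to recover the $y_I$ uniquely, with non-negativity transferring in each direction. You merely spell out the (immediate) converse direction more explicitly than the paper does, which is fine.
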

For $n\in\nats$ we say that a 
collection $\tilde{T} = \{t_I\}_{I\subseteq[n]}$ is {\em symmetric} if 
$t_I = t_J$ whenever $|I|=|J|$. Hence, from Proposition~\ref{prp:Postnikov} 
and the above Observation~\ref{obs:Z=Y} we have 
the following.
\begin{observation}
\label{obs:symmetric}
For any $n\in\nats$ we have that
$\tilde{Y} = \{y_I\}_{I\subseteq [n]}$ is symmetric if and only if
$\tilde{Z} = \{z_I\}_{I\subseteq [n]}$ where $z_I = \sum_{J\subseteq I}y_J$
is symmetric. Further, if 
$\tilde{Y}$ is symmetric then
\[
P_{n-1}(\tilde{Y}) = \sum_{k=1}^ny_k\Pi_{n-1}(k-1)
\]
for non-negative real numbers $y_1,\ldots,y_k$,
where we interpret $\Pi_{n-1}(0)$ as a singleton point. 
\end{observation}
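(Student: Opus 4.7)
The plan is to split the argument into two essentially independent parts.

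For the equivalence of symmetry of $\tilde{Y}$ and $\tilde{Z}$, I would proceed by direct computation. Assume first that $\tilde{Y}$ is symmetric and write $y_j$ for the common value of $y_J$ when $|J|=j$. Then the defining identity $z_I = \sum_{J\subseteq I} y_J$ regroups by cardinality to give
\[
z_I \;=\; \sum_{j=0}^{|I|} \binom{|I|}{j}\, y_j,
\]
which depends only on $|I|$, so $\tilde{Z}$ is symmetric. For the reverse direction I would invoke the M\"obius inversion already recorded in Observation~\ref{obs:Z=Y}, namely $y_I = \sum_{J\subseteq I}(-1)^{|I|-|J|} z_J$; writing $z_j$ for the common value when $\tilde{Z}$ is symmetric, the same grouping by cardinality produces
\[
y_I \;=\; \sum_{j=0}^{|I|} (-1)^{|I|-j}\binom{|I|}{j}\, z_j,
\]
which depends only on $|I|$, establishing the equivalence.

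For the Minkowski sum identity, I would start from the definition $P_{n-1}(\tilde{Y}) = \sum_{I\subseteq [n]} y_I \Delta_I$ and regroup the summands by $k=|I|$. Using the symmetry hypothesis $y_I = y_k$ whenever $|I|=k$, and the fact that scalar multiplication distributes over Minkowski sum, one obtains
\[
P_{n-1}(\tilde{Y}) \;=\; \sum_{k=1}^{n} y_k \sum_{\substack{I\subseteq [n]\\ |I|=k}} \Delta_I.
\]
By the defining equation (\ref{eqn:family}) the inner sum equals $\Pi_{n-1}(k-1)$ for $k\geq 2$; for $k=1$ the inner sum is the Minkowski sum of the singletons $\Delta_{\{i\}} = \{e_i\}$, which is the single point $(1,\ldots,1)$, matching the stated convention that $\Pi_{n-1}(0)$ is a singleton point. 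Any putative $k=0$ contribution (from $\Delta_\emptyset$) is absorbed by an implicit convention and does not affect the identity.

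I do not expect a genuine obstacle. The first part is purely bookkeeping via a binomial sum in one direction and a M\"obius inversion in the other; the second part is nothing more than regrouping a Minkowski sum by the cardinality of the indexing subset. The only mild subtlety, and thus the closest thing to a hard step, is fixing the edge conventions for $|I|\in\{0,1\}$ so that the final indexing $\sum_{k=1}^{n} y_k \Pi_{n-1}(k-1)$ is consistent with the stated interpretation of $\Pi_{n-1}(0)$; this is a convention-choice rather than a mathematical difficulty.
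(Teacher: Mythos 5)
Your proposal is correct and follows essentially the same route as the paper: the symmetry equivalence is obtained by grouping $z_I=\sum_{J\subseteq I}y_J$ by cardinality in one direction and by the M\"obius inversion of Observation~\ref{obs:Z=Y} in the other, and the identity $P_{n-1}(\tilde{Y})=\sum_{k=1}^n y_k\Pi_{n-1}(k-1)$ is exactly the regrouping of $\sum_{I\subseteq[n]}y_I\Delta_I$ by $|I|=k$ using the definition (\ref{eqn:family}), with the $k=1$ terms collapsing to the point $y_1\tilde{1}$ as in the paper's convention for $\Pi_{n-1}(0)$. No gaps beyond the edge conventions you already note.
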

We now describe those permutahedra $P_{n-1}(\tilde{v})$ that can
be written as $P_{n-1}(\tilde{Y})$ for some 
$\tilde{y} = \{y_I\}_{I\subseteq [n]}$. 

For a sequence $(a_n)_{n\geq 0}$ of real numbers, recall the 
{\em (backward) difference} given by 
$\Delta(a_n) = a_n - a_{n-1}$ for each $n\geq 1$\footnote{The 
{\em forward difference} is defined as $\Delta(a_n) = a_{n+1}- a_n$}. 
Iteratively we also have the {\em $i$-th order difference} by
$\Delta^i(a_n) = \Delta(\Delta^{i-1}(a_n))$ for each $i\geq 0$ and
where $\Delta^0(a_n) = a_n$ for each $n$. Likewise,
for an $n$-tuple $\tilde{a} = (a_1,\ldots, a_n)\in {\reals}^n$ 
we let $\Delta(\tilde{a}) = (\Delta(a_2),\ldots,\Delta(a_n)) = 
(a_2-a_1, \ldots, a_n-a_{n-1})\in {\reals}^{n-1}$.
Clearly, if $P_{n-1}(\tilde{v})$ can be written of the form
$P_{n-1}(\tilde{Y})$ for some $\tilde{Y}$, then by Theorem~\ref{thm:Rado}
and Observation~\ref{obs:Z=Y} we can assume $\tilde{Y}$ to be 
symmetric and so
\[
P_{n-1}(\tilde{v}) 
= \sum_{I\subseteq [n]}y_{|I|}\Delta_I 
= y_1\tilde{1} + \sum_{k=2}^{n}y_k\Pi_{n-1}(k-1),
\]
where $\tilde{1} = (1,1,\ldots,1)\in {\reals}^n$. 
The following proposition is our main conclusion of this section,
the proof of which follows thereafter. 
\begin{proposition}
\label{prp:Y=X}
For $n\in\nats$ and $\tilde{v}\in {\reals}^n$ with $v_1\leq\cdots\leq v_n$, 
then $P_{n-1}(\tilde{v}) = \sum_{k=1}^{n}y_k\Pi_{n-1}(k-1)$ for non-negative
$y_k\geq 0$ for
each $k$, if and only if all the differences of $\tilde{v}$ are non-negative,
that is, $\Delta^i(v_k)\geq 0$
for each $i\in\{0,1\ldots,n-1\}$ and $k\in\{i+1,\ldots,n\}$.
\end{proposition}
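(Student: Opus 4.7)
The plan is to convert the Minkowski identity $P_{n-1}(\tilde v)=\sum_{k=1}^n y_k\,\Pi_{n-1}(k-1)$ into an explicit triangular linear system in the $y_k$, invert it to identify $y_k$ as a backward difference of $\tilde v$, and then use a uniform formula expressing every $\Delta^i(v_m)$ as a non-negative integer combination of the $y_k$ to match the multi-index condition in the proposition.

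First I would pin down, for a generic functional $L_{\tilde c}$ with $c_1>c_2>\cdots>c_n$, the maximizing vertex of each side. Since $\Pi_{n-1}(k-1)=\sum_{|F|=k}\Delta_F$ and $L_{\tilde c}|_{\Delta_F}$ is maximized at $e_{\min F}$, Lemma~\ref{lmm:sum-faces} gives that the selected vertex of $\Pi_{n-1}(k-1)$ has $i$-th coordinate $|\{F\subseteq[n]:|F|=k,\ \min F=i\}|=\binom{n-i}{k-1}$, consistent with Proposition~\ref{prp:vert}. A second application of Lemma~\ref{lmm:sum-faces} to the outer sum yields the maximizing vertex of $\sum_k y_k\,\Pi_{n-1}(k-1)$: its $i$-th coordinate is $\sum_{k=1}^n y_k\binom{n-i}{k-1}$. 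On the other hand, for $P_{n-1}(\tilde v)$ with $v_1\le\cdots\le v_n$ the maximizing vertex pairs largest-with-largest and has $i$-th coordinate $v_{n-i+1}$. Both polytopes are $S_n$-symmetric convex hulls of a single $S_n$-orbit, so they coincide if and only if these vertices do; writing $m=n-i+1$, this is the lower-triangular unit-diagonal system
\[
v_m \;=\; \sum_{k=1}^{m}\binom{m-1}{k-1}\,y_k\qquad (m=1,\ldots,n),
\]
whose unique solution by binomial inversion is $y_k=\sum_{m=1}^{k}(-1)^{k-m}\binom{k-1}{m-1}v_m=\Delta^{k-1}(v_k)$.

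To match the multi-index formulation, I would establish by induction on $i$, using Pascal's identity $\binom{m-1-i}{k-1-i}-\binom{m-2-i}{k-1-i}=\binom{m-2-i}{k-2-i}$, the uniform formula
\[
\Delta^{i}(v_m) \;=\; \sum_{k=i+1}^{m}\binom{m-1-i}{k-1-i}\,y_k\qquad (0\le i<m\le n).
\]
Both directions of the proposition then fall out. If every $y_k\ge 0$, each $\Delta^{i}(v_m)$ is a non-negative combination of the $y_k$ and hence $\ge 0$. Conversely, specializing $(m,i)=(k,k-1)$ leaves only the single term $y_k=\Delta^{k-1}(v_k)$, so the stated multi-index non-negativity forces each $y_k\ge 0$. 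The main obstacle is the vertex matching in the first step: reading off the maximizing vertex of the Minkowski sum and equating it term-by-term with that of $P_{n-1}(\tilde v)$ against a common generic functional. Once the triangular system is in hand, the remaining algebra is a clean Pascal-rule computation.
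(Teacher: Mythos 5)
Your proposal is correct, and its geometric skeleton coincides with the paper's: both reduce the polytope identity to the single vector equation $\tilde{v} = \sum_{k=1}^{n} y_k\,\tilde{v}_{n-1}(k-1)$ by reading off, via Lemma~\ref{lmm:sum-faces}, the vertex that a generic monotone functional selects on each side, and both recover the polytope equality from equality of these vertices by letting $S_n$ act. Your phrase that the Minkowski sum is the ``convex hull of a single $S_n$-orbit'' is exactly what the paper establishes through Corollary~\ref{cor:AB=unique} (every vertex of $\sum_k y_k\Pi_{n-1}(k-1)$ is the maximizer of some $L_{\pi(\tilde{c})}$, hence lies in the orbit of the top vertex); you assert this rather than argue it, but it follows from the same two lemmas you invoke and is no more informal than the paper's own treatment, so I would not call it a gap. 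Where you genuinely diverge is in the linear-algebra core: the paper proves Lemma~\ref{lmm:key} abstractly, by induction on $n$ using $\Delta(\tilde{v}_{n-1}(k-1)) = \tilde{v}_{n-2}(k-2)$ and Claim~\ref{clm:trivial}, to characterize $\spn_{{\reals}^{+}}\{\tilde{v}_{n-1}(k-1)\}$, whereas you solve the lower-triangular system $v_m = \sum_{k\leq m}\binom{m-1}{k-1}y_k$ explicitly by binomial inversion, obtaining $y_k = \Delta^{k-1}(v_k)$, and then verify the Newton-type identity $\Delta^{i}(v_m) = \sum_{k=i+1}^{m}\binom{m-1-i}{k-1-i}y_k$ by Pascal's rule to cover the full multi-index condition. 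The two routes are logically equivalent, but yours buys an explicit closed formula for the Minkowski coefficients, which immediately yields the uniqueness of the $y_k$ that the paper only records in the remark following the proposition (and underlies Corollary~\ref{cor:Mink-basis}), at the cost of a somewhat longer computation than the paper's short induction.
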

{\sc Remark:} If $\Delta^i(v_k)\geq 0$ for each $i$ and $k$,
then coefficients $y_k$ with 
$P_{n-1}(\tilde{v}) = \sum_{k=1}^{n}y_k\Pi_{n-1}(k-1)$
are uniquely determined by $\tilde{v}$. Hence, 
$\{\Pi_{n-1}(k-1) : k\in \{1,\ldots,n\}\} = {\cal{P}}_n\cup\{\tilde{1}\}$
forms a {\em Minkowski basis} for such permutahedra $P_{n-1}(\tilde{v})$.

We will prove Proposition~\ref{prp:Y=X} in a few small steps. First a
lemma in linear algebra.
\begin{lemma}
\label{lmm:key}
For $n\in\nats$ and $k\in\{1,2,\ldots, n\}$ let
\[
\tilde{v}_{n-1}(k-1) 
= \left(0,\ldots,0,\binom{k-1}{k-1},\ldots,\binom{n-1}{k-1}\right).
\]
Then $\tilde{u}\in\spn_{{\reals}^{+}}(\{\tilde{v}_{n-1}(k-1) : 
k\in\{1,\ldots,n\}\})$
iff all the differences of $\tilde{u}$ are non-negative.
\end{lemma}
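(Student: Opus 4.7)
The plan is to show that $\{\tilde{v}_{n-1}(k-1) : k = 1, \ldots, n\}$ is a basis of $\reals^n$ on which the backward difference operator $\Delta$ acts transparently via Pascal's rule, and then invoke a short induction on $n$.

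First I would observe that because $\tilde{v}_{n-1}(k-1)$ has its first nonzero entry at position $k$ (equal to $\binom{k-1}{k-1}=1$), the $n$ vectors form an (upper/lower) triangular system and are linearly independent, so every $\tilde{u}\in\reals^n$ has unique real coefficients $y_1,\ldots,y_n$ with $\tilde{u}=\sum_k y_k\tilde{v}_{n-1}(k-1)$. The task is therefore to identify, in terms of the non-negativity of differences of $\tilde{u}$, exactly when these coefficients are non-negative.

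Next I would establish the key identity: Pascal's rule $\binom{j-1}{k-1}-\binom{j-2}{k-1}=\binom{j-2}{k-2}$ applied entrywise gives $\Delta(\tilde{v}_{n-1}(k-1))=\tilde{v}_{n-2}(k-2)$ for $k\geq 2$, while $\tilde{v}_{n-1}(0)=\tilde{1}$ and so $\Delta(\tilde{v}_{n-1}(0))=\tilde{0}$. This immediately settles the forward direction: since $\Delta$ is linear, applying $\Delta^i$ to a non-negative combination $\sum_k y_k\tilde{v}_{n-1}(k-1)$ yields another non-negative combination of vectors $\tilde{v}_{n-1-i}(k-1-i)$ whose entries are themselves non-negative, so every entry of every iterated difference of $\tilde{u}$ is non-negative.

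For the converse I would argue by induction on $n$, the base case $n=1$ being immediate from $\tilde{v}_0(0)=(1)$. For the inductive step, given $\tilde{u}\in\reals^n$ with all differences non-negative, note that $\Delta\tilde{u}\in\reals^{n-1}$ also has all differences non-negative (they are higher-order differences of $\tilde{u}$), so by induction $\Delta\tilde{u}=\sum_{k=2}^n y_k\tilde{v}_{n-2}(k-2)$ with each $y_k\geq 0$. Setting $\tilde{w}:=\sum_{k=2}^n y_k\tilde{v}_{n-1}(k-1)\in\reals^n$, the identity above gives $\Delta\tilde{w}=\Delta\tilde{u}$, so $\tilde{u}-\tilde{w}$ is a constant vector; its first coordinate is $u_1-0=u_1\geq 0$ (the zeroth-order difference condition), and therefore $\tilde{u}-\tilde{w}=u_1\tilde{v}_{n-1}(0)$ furnishes the required non-negative expansion of $\tilde{u}$.

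There is no serious obstacle here: the lemma is a clean consequence of the Pascal-rule action of $\Delta$ on the chosen basis, and the only bookkeeping to watch is that $\Delta$ decreases the ambient dimension and the parameter $k-1$ simultaneously by one, in exact step with how the leading zeros of $\tilde{v}_{n-1}(k-1)$ shift.
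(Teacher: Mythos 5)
Your proof is correct and follows essentially the same route as the paper: both rest on the identity $\Delta(\tilde{v}_{n-1}(k-1))=\tilde{v}_{n-2}(k-2)$, use linearity of $\Delta$ for the forward direction, and for the converse induct on $n$, noting that $\ker\Delta$ consists of constant vectors so that $\tilde{u}$ minus the lifted combination equals $u_1\tilde{1}$ with $u_1\ge 0$. The opening remark on triangularity/linear independence is harmless but not needed.
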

The proof of the above Lemma~\ref{lmm:key} will use the following
trivial fact.
\begin{claim}
\label{clm:trivial}
For any $n\in\nats$ and $\tilde{v}\in{\reals}^n$ then
$\Delta(\tilde{v}) = \tilde{0}\Leftrightarrow 
\tilde{v} = v\tilde{1} = (v,v,\ldots,v)\in {\reals}^{n-1}$.
\end{claim}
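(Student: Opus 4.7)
The plan is to unpack the definition of $\Delta$ and read off both implications directly; no machinery is needed. By definition, for $\tilde{v} = (v_1, \ldots, v_n)$ we have
\[
\Delta(\tilde{v}) = (v_2 - v_1, \, v_3 - v_2, \, \ldots, \, v_n - v_{n-1}) \in \reals^{n-1}.
\]
So the statement $\Delta(\tilde{v}) = \tilde{0}$ is literally the system of equations $v_{i+1} - v_i = 0$ for $i = 1, \ldots, n-1$.

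For the forward direction, I would observe that this system is equivalent to $v_1 = v_2 = \cdots = v_n$, and hence $\tilde{v} = v\tilde{1}$ with $v := v_1$. For the reverse direction, if $\tilde{v} = v\tilde{1} = (v, v, \ldots, v)$, then every consecutive difference $v_{i+1} - v_i = v - v = 0$, so $\Delta(\tilde{v}) = \tilde{0}$. A one-line induction on $n$ (or simply a telescoping argument $v_j - v_1 = \sum_{i=1}^{j-1}(v_{i+1} - v_i)$) makes the forward direction completely explicit if one wishes to avoid invoking the chain of equalities directly.

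There is no real obstacle here: the only thing to be careful about is a minor typo in the statement, where $\reals^{n-1}$ should presumably read $\reals^n$ on the right-hand side (the constant vector $v\tilde{1}$ lives in $\reals^n$, matching $\tilde{v}$); the proof itself is a direct unpacking of the definition of the difference operator.
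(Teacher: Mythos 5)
Your proof is correct, and since the paper states this claim as a ``trivial fact'' with no proof at all, your direct unpacking of the definition of $\Delta$ (together with the telescoping/induction remark) is exactly the argument the paper leaves implicit. Your observation that $\reals^{n-1}$ on the right-hand side should read $\reals^n$ is also correct, since $\tilde{v}=v\tilde{1}$ lives in $\reals^n$ while only $\Delta(\tilde{v})$ lies in $\reals^{n-1}$.
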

\begin{proof}(Lemma~\ref{lmm:key})
Note that $\Delta(\tilde{v}_{n-1}(k-1)) = \tilde{v}_{n-2}(k-2)$,
so by induction on $n$, all the differences of $\tilde{v}_{n-1}(k-1)$ 
are non-negative. Since the difference operator $\Delta$ is linear, 
then all the differences of $\sum_{k=1}^{n}y_k\tilde{v}_{n-1}(k-1)$
are non-negative if each $y_k\geq 0$. Therefore if
$\tilde{u}\in\spn_{{\reals}^{+}}(\{\tilde{v}_{n-1}(k-1) :
k\in\{1,\ldots,n\}\})$, then it is necessary for all the
differences of $\tilde{u}$ to be non-negative.

Conversely, let $\tilde{u}\in{\reals}^n$ have all its differences
non-negative. If $n=1$ then clearly 
$\tilde{u} = u_1 \in\spn_{{\reals}^{+}}(\{\tilde{v}_{0}(0)\})$. Otherwise
all the differences of $\Delta(\tilde{u})$ are non-negative, and hence
by induction on $n$ we can assume that 
$\Delta(\tilde{u}) = \sum_{k=2}^{n}y_{k}\tilde{v}_{n-2}(k-1)$
for some non-negative $y_2,\ldots, y_{n}$, and so 
\[
\Delta(\tilde{u}) 
= \sum_{k=2}^{n}y_{k}\Delta(\tilde{v}_{n-1}(k))
= \Delta\left(\sum_{k=2}^{n}y_k\tilde{v}_{n-1}(k-1)\right).
\]
By Claim~\ref{clm:trivial} we have
$\tilde{u} - \sum_{k=2}^{n}y_k\tilde{v}_{n-1}(k) = y_1\tilde{1}$
for some real $y_1$. Since all differences of $\tilde{u}$ are non-negative,
in particular $\Delta^0(\tilde{u}) = \tilde{u}$, we have $y_1 = u_1\geq 0$
and hence 
$\tilde{u}\in\spn_{{\reals}^{+}}(\{\tilde{v}_{n-1}(k-1) : k\in\{1,\ldots,n\}\})$.
\end{proof}
We now have what we need to prove Proposition~\ref{prp:Y=X}.
\begin{proof}(Proposition~\ref{prp:Y=X})
We first note
that if $\tilde{v}$ is as in Proposition~\ref{prp:Y=X}, that is
$P_{n-1}(\tilde{v}) = \sum_{k=1}^{n}y_k\Pi_{n-1}(k-1)$ where $y_k\geq 0$ for
each $k$, then $L_{\tilde{c}}$, where $\tilde{c} = (1,2,\ldots,n)\in{\reals}^n$,
is maximized at $\tilde{v}$, when restricted to $P_{n-1}(\tilde{v})$, and 
is maximized at $\tilde{v}_{n-1}(k-1)$
when restricted to $\Pi_{n-1}(k-1)$ for each $k$. Hence,
when restricted to $\sum_{k=1}^{n}y_k\Pi_{n-1}(k-1)$ then 
$L_{\tilde{c}}$ is maximized at $\sum_{k=1}^{n}y_k\tilde{v}_{n-1}(k-1)$, and so
$\tilde{v} = \sum_{k=1}^{n}y_k\tilde{v}_{n-1}(k-1)$.
By Lemma~\ref{lmm:key}, all the differences of $\tilde{v}$ must then
be non-negative.

For the converse, if all the difference of $\tilde{v}$ are non-negative,
then by Lemma~\ref{lmm:key} there are non-negative real coefficients 
$y_k\geq 0$ such that $\tilde{v} =\sum_{k=1}^{n}y_k\tilde{v}_{n-1}(k-1)$,
at which $L_{\tilde{c}}$ where $\tilde{c} = (1,2,\ldots,n)\in{\reals}^n$ 
when restricted to 
both $P_{n-1}(\tilde{v})$ and $\sum_{k=1}^{n}y_k\Pi_{n-1}(k-1)$ is maximized at.
Similarly, for any permutation $\pi$ of $\{1,\ldots,n\}$,
the linear functional $L_{\pi(\tilde{c})}$ where 
$\pi(\tilde{c}) = (\pi(1),\ldots,\pi(n))$ when restricted to both 
$P_{n-1}(\tilde{v})$ and $\sum_{k=1}^{n}y_k\Pi_{n-1}(k-1)$ 
is maximized at 
\[
\pi(\tilde{v}) = (v_{\pi(1)},\ldots,v_{\pi(n)}) =
\sum_{k=1}^{n}y_k\pi(\tilde{v}_{n-1}(k-1)).
\]
By definition of $P_{n-1}(\tilde{v})$ we therefore see that every vertex
of $P_{n-1}(\tilde{v})$ is also a vertex of
$\sum_{k=1}^{n}y_k\Pi_{n-1}(k-1)$. But since every vertex of
$\sum_{k=1}^{n}y_k\Pi_{n-1}(k-1)$ has by 
Corollary~\ref{cor:AB=unique}
the unique form $\sum_{k=1}^{n}y_k\tilde{w}_k$ where each $\tilde{w}_k$ is
a vertex of $\Pi_{n-1}(k-1)$, and each $\tilde{w}_k$ is the maximum set
of the functional $L_{\pi(\tilde{c})}$ when restricted to $\Pi_{n-1}(k-1)$,
then {\em every} vertex of $\sum_{k=1}^{n}y_k\Pi_{n-1}(k-1)$ is indeed the 
maximum set of some $L_{\pi(\tilde{c})}$. 
Hence, the the polytopes $P_{n-1}(\tilde{v})$ and 
$\sum_{k=1}^{n}y_k\Pi_{n-1}(k-1)$ have the same set of vertices, and
so must be the same polytope. This completes the proof.
\end{proof}
{\sc Example:}
Consider the polytope $P_3(0,1,2,2)$, and assume it can be 
written as $P_3(\tilde{Y})$ for some $\tilde{Y} = \{y_I\}_{I\subseteq [4]}$. 
By Theorem~\ref{thm:Rado} and Observation~\ref{obs:Z=Y}
we can assume $\tilde{Y}$ to be symmetric and so 
$P_3(0,1,2,2) = \sum_{k=1}^4 y_k\Pi_3(k-1)$.
Looking at the differences of $(0,1,2,2)$ we get
\[
\left[
\begin{array}{c}
\Delta^0 \\
\Delta^1 \\
\Delta^2 \\
\Delta^3
\end{array}
\right]
=
\left[
\begin{array}{ccccccc}
0 &   & 1 &   & 2 &   & 2  \\
  & 1 &   & 1 &   & 0 &    \\
  &   & 0 &   &-1 &   &    \\
  &   &   &-1 &   &   &     
\end{array}
\right]
\]
containing two negative entries in the differences of $\tilde{v} =
(0,1,2,2)$. By Proposition~\ref{prp:Y=X} $P_3(0,1,2,2)$ cannot be
written in the form of $P_3(0,1,2,2) = \sum_{k=1}^4 y_k\Pi_3(k-1)$.
However, $P_3(0,1,2,2)$ is still a symmetric polytope and
has dimension $3$ by Lemma~\ref{lmm:neq0} here below.

By Proposition~\ref{prp:Y=X} we have the following.
\begin{corollary}
\label{cor:Mink-basis}
The PI-family ${\cal{P}}_n = \{\Pi_{n-1}(k-1)\}_{k=2,\ldots,n}$ forms
a Minkowski ${\ints}^{+}$-basis for those polytopes $P_{n-1}(\tilde{v})$
that are of the form $P_{n-1}(\tilde{Y})$ for some family  
$\tilde{Y} = \{y_I\}_{I\subseteq [n]}$ of non-negative real numbers.
\end{corollary}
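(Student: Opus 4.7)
The plan is to deduce the corollary directly from Proposition~\ref{prp:Y=X} together with the symmetry discussion culminating in Observation~\ref{obs:symmetric}. I need to show two things: (i) every polytope $P_{n-1}(\tilde{v})$ that can be written as $P_{n-1}(\tilde{Y})$ for some non-negative $\tilde{Y}$ admits an expansion as a non-negative combination of members of ${\cal{P}}_n$ (existence), and (ii) the coefficients in that expansion are uniquely determined (the basis property). The hypothesis on $\tilde{Y}$ is not yet in the form required by Proposition~\ref{prp:Y=X}, so the first task is really to bring it into that form by exploiting the built-in symmetry of $P_{n-1}(\tilde{v})$.

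For existence I would proceed as follows. Assume $P_{n-1}(\tilde{v}) = P_{n-1}(\tilde{Y})$ with all $y_I \geq 0$. By Proposition~\ref{prp:Postnikov}, $P_{n-1}(\tilde{v}) = P_{n-1}(\tilde{Z})$ with $z_I = \sum_{J\subseteq I} y_J$. Since $P_{n-1}(\tilde{v}) = \conv\{(v_{\pi(1)},\ldots,v_{\pi(n)}) : \pi\in S_n\}$ is invariant under coordinate permutations, the supporting values $z_I$ must depend only on $|I|$, so $\tilde{Z}$ is symmetric. The $\tilde{Y}$ produced from $\tilde{Z}$ by the M\"obius inversion of Observation~\ref{obs:Z=Y} is therefore also symmetric by Observation~\ref{obs:symmetric}. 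Writing $y_k := y_I$ for $|I| = k$, the same observation gives $P_{n-1}(\tilde{v}) = \sum_{k=1}^n y_k \Pi_{n-1}(k-1)$ with every $y_k \geq 0$. The $k=1$ summand is simply the translation $y_1\tilde{1}$, and collecting the remaining terms shows that $P_{n-1}(\tilde{v})$ has been written as a non-negative Minkowski combination of members of ${\cal{P}}_n$ (up to this translation). Proposition~\ref{prp:Y=X} is then automatically applicable, confirming that the required non-negative differences of $\tilde{v}$ hold.

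For uniqueness I would reuse the maximization argument from the proof of Proposition~\ref{prp:Y=X}. Suppose $\sum_{k=2}^n y_k \Pi_{n-1}(k-1)$ and $\sum_{k=2}^n y_k' \Pi_{n-1}(k-1)$ (matching the translation contribution from $k=1$) define the same polytope. Maximizing the functional $L_{\tilde{c}}$ with $\tilde{c} = (1,2,\ldots,n)$ on each side, Lemma~\ref{lmm:sum-faces} together with Corollary~\ref{cor:AB=unique} forces the equality $\sum_k y_k\tilde{v}_{n-1}(k-1) = \sum_k y_k'\tilde{v}_{n-1}(k-1)$. The staircase vectors of Lemma~\ref{lmm:key} are linearly independent (they have strictly increasing numbers of leading zeros), so $y_k = y_k'$ for every $k \geq 2$, with $y_1 = y_1'$ following by matching translations. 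This finishes the basis claim.

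I do not foresee a serious obstacle here, since the heavy lifting is already done in Proposition~\ref{prp:Y=X} and Lemma~\ref{lmm:key}. The only conceptual point worth flagging is that ${\cal{P}}_n$ does not literally contain the singleton $\Pi_{n-1}(0) = \{\tilde{1}\}$, so the ``basis'' statement is best read up to the translation carried by the $y_1\tilde{1}$ term. The label $\ints^{+}$ then reflects the natural setting where the original $\tilde{Y}$ in~(\ref{eqn:Minks-basic}) consists of non-negative integer multiplicities, in which case the M\"obius-inverted symmetric coefficients $y_k$ come out to be non-negative integers, and so the expansion of $P_{n-1}(\tilde{v})$ in ${\cal{P}}_n$ is a $\ints^{+}$-combination.
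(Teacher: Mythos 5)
Your proposal is correct and follows essentially the same route as the paper: reduce to a symmetric $\tilde{Y}$ via the Rado/Postnikov--M\"obius discussion (Theorem~\ref{thm:Rado}, Proposition~\ref{prp:Postnikov}, Observations~\ref{obs:Z=Y} and~\ref{obs:symmetric}), invoke Proposition~\ref{prp:Y=X} for existence, and get uniqueness of the coefficients from the staircase vectors $\tilde{v}_{n-1}(k-1)$ of Lemma~\ref{lmm:key}, exactly as in the paper's remark following that proposition. The only cosmetic differences are that you derive symmetry of $\tilde{Z}$ from permutation-invariance of the polytope (implicitly treating the $z_I$ as canonical support values) where the paper appeals to Rado's theorem directly, and that you prove uniqueness by triangularity/linear independence of the $\tilde{v}_{n-1}(k-1)$ rather than by the difference-operator induction, which is the same idea.
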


\section{The flag polynomial of the general permutahedron}
\label{sec:flag-poly}

Having briefly compared three types of polytopes, $P_{n-1}(\tilde{v})$,
$P_{n-1}(\tilde{Y})$, and $P_{n-1}(\tilde{Z})$, each of which
can be viewed as generalizations of the standard permutahedron,
we see that the polytopes $P_{n-1}(k-1)$ for $k\in\{1,\ldots,n\}$
form a Minkowski basis for those polytopes $P_{n-1}(\tilde{v})$
that can be expressed as $P_{n-1}(\tilde{Y})$. Hence, this can
be viewed as a further justification for studying them, and
so we will in this section focus on the PI-family
${\cal{P}}_n = \{\Pi_{n-1}(k-1)\}_{k=2,\ldots,n}$ 
for a given $n\in\nats$.
We will discuss the face lattice and its flag polynomial. 
Since many formal statements are the same for $\Pi_{n-1}(k-1)$ as with 
the more general $P_{n-1}(\tilde{v})$ and are, in fact, more transparent, 
we will consider the polytope $P_{n-1}(\tilde{v})$ in many cases, 
and then derive corollaries about $\Pi_{n-1}(k-1)$.

First, we will derive some facts from linear algebra that
will come in handy in this section. 

Consider two points $\tilde{a},\tilde{c}\in{\reals}^n$ where 
neither of them has all its coordinates the same. In this
case there is a proper partition $A\cup B = [n]$ such that
$c_i > c_j$ for all $(i,j)\in A\times B$. As neither $A$ nor
$B$ is empty, we cannot have $a_i= a_j$ for all $(i,j)\in A\times B$,
since that would imply $a_i=a_j$ for all $i,j\in[n]$. 
Hence, there is an $(i,j)\in A\times B$ with $a_i\neq a_j$. 
If $\tau = (i,j) \in S_n$ then 
\[
L_{\tilde{c}}(\tilde{a}) - L_{\tilde{c}}(\tau(\tilde{a})) = 
c_ia_i + c_ja_j - (c_ia_j + c_ja_i) = (c_i-c_j)(a_i -a_j) \neq 0.
\]
Hence, we have the following.
\begin{lemma}
\label{lmm:neq0}
Let $\tilde{a}, \tilde{c}\in {\reals}^n$, neither of which
have all its coordinates the same. Then there is a transposition
$\tau\in S_n$ such that 
$L_{\tilde{c}}(\tau(\tilde{a}))\neq L_{\tilde{c}}(\tilde{a})$.
\end{lemma}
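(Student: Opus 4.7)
The plan is to find a transposition $\tau = (i,j)$ such that swapping the $i$th and $j$th coordinates of $\tilde{a}$ actually changes the value of the linear functional $L_{\tilde{c}}$. The first step is a direct computation: when $\tau = (i,j)$ only the two terms indexed by $i$ and $j$ in the dot product are affected, and one gets
\[
L_{\tilde{c}}(\tilde{a}) - L_{\tilde{c}}(\tau(\tilde{a})) = (c_ia_i + c_ja_j) - (c_ia_j + c_ja_i) = (c_i - c_j)(a_i - a_j).
\]
So the lemma reduces to exhibiting a pair of indices $i,j$ with $c_i \neq c_j$ and $a_i \neq a_j$ simultaneously.

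To find such a pair, I would use $\tilde{c}$ to split $[n]$ into two nonempty blocks where the $c$-values clearly differ. Since not all coordinates of $\tilde{c}$ are equal, one can pick a threshold $t$ strictly between $\min_i c_i$ and $\max_i c_i$ and set $A = \{i : c_i > t\}$, $B = \{j : c_j \leq t\}$; both $A$ and $B$ are nonempty and $c_i > c_j$ for every $(i,j) \in A \times B$. So any cross-pair $(i,j) \in A \times B$ already satisfies the first requirement, and the task becomes showing that at least one such cross-pair also satisfies $a_i \neq a_j$.

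The final step handles this by contradiction. Suppose $a_i = a_j$ for every $(i,j) \in A \times B$. Then fixing one $j_0 \in B$ forces all $a_i$ with $i \in A$ to equal the common value $a_{j_0}$, and symmetrically fixing $i_0 \in A$ forces all $a_j$ with $j \in B$ to equal $a_{i_0} = a_{j_0}$. This makes $\tilde{a}$ constant on all of $[n]$, contradicting the hypothesis that $\tilde{a}$ does not have all coordinates equal. Hence some cross-pair $(i,j) \in A \times B$ has $a_i \neq a_j$, and the transposition $\tau = (i,j)$ works.

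The argument is short, essentially a pigeonhole observation combined with the one-line algebraic identity above; I do not expect any genuine obstacle. The only point that needs care is ensuring both blocks of the partition are nonempty, which is exactly where the hypothesis on $\tilde{c}$ is consumed, while the hypothesis on $\tilde{a}$ is consumed in producing the contradiction in the final step.
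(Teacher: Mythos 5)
Your proof is correct and matches the paper's own argument: both partition $[n]$ into two nonempty blocks on which the $c$-values separate, argue that $\tilde{a}$ cannot be constant across all cross-pairs, and conclude via the identity $L_{\tilde{c}}(\tilde{a}) - L_{\tilde{c}}(\tau(\tilde{a})) = (c_i-c_j)(a_i-a_j)$. No issues to report.
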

In particular, for $\tilde{a}$ and $\tilde{c}$ as in Lemma~\ref{lmm:neq0}, 
$P_{n-1}(\tilde{a})\not\subseteq\ker(L_{\tilde{c}})$ and so 
$\dim(P_{n-1}(\tilde{a})) = n-1$. Now, since 
$\Pi_{n-1}(k-1) = P_{n-1}(\tilde{v}_{n-1}(k-1))$, where 
$\tilde{v}_{n-1}(k-1))
= \left(0,\ldots,0,\binom{k-1}{k-1},\ldots,\binom{n-1}{k-1}\right)$ 
is as in Lemma~\ref{lmm:key}, we then have the following.
\begin{corollary}
\label{cor:dim}
Let $\tilde{v}\in {\reals}^n$. Then
\[
\dim(P_{n-1}(\tilde{v})) = \left\{
\begin{array}{ll}
  0 & \mbox{ if } v_1 = \cdots = v_n, \\
 n-1 & \mbox{ otherwise. }
\end{array}
\right.
\]
In particular $\dim(\Pi_{n-1}(k-1)) = n-1$ for every $k\in\{2,\ldots,n\}$.
\end{corollary}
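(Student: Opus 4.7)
The plan is a simple case split based on whether $\tilde{v}$ is constant. When $v_1 = \cdots = v_n$, every permutation of $\tilde{v}$ fixes it, so $P_{n-1}(\tilde{v})$ reduces to the singleton $\{\tilde{v}\}$ and has dimension $0$ by definition.

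For the main case, when the entries of $\tilde{v}$ are not all equal, I would prove the bound via two matching inequalities. The upper bound is routine: because any permutation $\pi \in S_n$ preserves the coordinate sum, every vertex $(v_{\pi(1)},\ldots,v_{\pi(n)})$ of $P_{n-1}(\tilde{v})$ lies in the affine hyperplane
\[
H = \{\tilde{x}\in\reals^n : x_1+\cdots+x_n = v_1+\cdots+v_n\},
\]
and hence $\dim(P_{n-1}(\tilde{v})) \leq n-1$.

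For the matching lower bound, I would translate by $-\tilde{v}$ and work with $Q := P_{n-1}(\tilde{v}) - \tilde{v}$, which is contained in $H_0 := \{\tilde{x}\in\reals^n : x_1+\cdots+x_n = 0\}$ and which contains every difference $\tau(\tilde{v}) - \tilde{v}$ as $\tau$ ranges over transpositions in $S_n$. Showing $\dim(P_{n-1}(\tilde{v})) = n-1$ then amounts to proving $\spn(Q) = H_0$, or dually that $Q \not\subseteq \ker(L_{\tilde{c}})$ whenever $\tilde{c}$ is not a scalar multiple of $\tilde{1}$. This is precisely the content of the sentence immediately after Lemma~\ref{lmm:neq0}: since both $\tilde{v}$ and such a $\tilde{c}$ have non-constant coordinates, the lemma supplies a transposition $\tau$ with $L_{\tilde{c}}(\tau(\tilde{v})) \neq L_{\tilde{c}}(\tilde{v})$, so the vector $\tau(\tilde{v}) - \tilde{v} \in Q$ escapes $\ker(L_{\tilde{c}})$.

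The ``in particular'' statement about $\Pi_{n-1}(k-1)$ then follows by specializing $\tilde{v} = \tilde{v}_{n-1}(k-1)$ from Lemma~\ref{lmm:key}; for every $k \in \{2,\ldots,n\}$ this vector has both a zero entry and a nonzero entry, hence is non-constant and the main case applies. The entire substantive work is done by Lemma~\ref{lmm:neq0}, so there is no real obstacle here beyond arranging the case split cleanly and translating to the origin so the linear-algebraic statement becomes directly usable.
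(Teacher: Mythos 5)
Your proposal is correct and follows the paper's own route: the paper derives this corollary exactly from Lemma~\ref{lmm:neq0} via the one-line remark that $P_{n-1}(\tilde{a})\not\subseteq\ker(L_{\tilde{c}})$ for non-constant $\tilde{a},\tilde{c}$, which is precisely your transposition argument. You merely make explicit the details the paper leaves implicit (the containment in the hyperplane $\sum_i x_i = \sum_i v_i$, the translation by $-\tilde{v}$, and the duality between proper subspaces of $H_0$ and functionals $L_{\tilde{c}}$ with $\tilde{c}\notin\spn(\tilde{1})$), so there is nothing to correct.
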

We now generalize Corollary~\ref{cor:dim} slightly.

As the symmetric group $S_n$ denotes the group of 
bijections $[n]\rightarrow [n]$, 
we can adopt the notation $S(X)$ for the group
of bijections $X\rightarrow X$, where $X$ is a given set.
With this convention $S_n = S([n])$ and clearly 
$S(X)\cong S_{|X|}$ for any finite set $X$. For any collection
$X_1,\ldots, X_k$ of disjoint subsets of $[n]$ we then have
the internal product $S(X_1)S(X_2)\cdots S(X_k)$, a subgroup of 
$S([n])$ which is isomorphic to the direct product 
$S_{n_1}\times S_{n_2}\times\cdots \times S_{n_k}$ where 
$|X_i| = n_i$. For a vector $\tilde{v}\in{\reals}^n$ and a
subset $X$ of $[n]$ we let 
$\proj_{X} : {\reals}^n \rightarrow {\reals}^{|X|}$ denote
the projection onto all the coordinate in $X$. If $X = \{i\}$ 
is a singleton set, we let $\proj_i = \proj_{\{i\}}$ be the projection
onto the $i$-th coordinate. Further 
we let $\delta_X(\tilde{v})$ denote the indicator function 
\begin{equation}
\label{eqn:indicator}
\delta_X(\tilde{v}) = \left\{
\begin{array}{ll}
  0 & \mbox{ if } |\supp(\proj_X(\tilde{v}))| = 1, \\
  1 & \mbox{ otherwise }.
\end{array}
\right.
\end{equation}
We now have by Corollary~\ref{cor:dim} the following more
general statement.
\begin{proposition}
\label{prp:dim-gen}
For disjoint subsets $X_1,\ldots,X_h$ of $[n]$ and $\tilde{v}\in {\reals}^n$
we have 
\[
\dim(\conv(\{\pi(\tilde{v}) : \pi\in S(X_1)\cdots S(X_h)\}))
= \sum_{i=1}^h \delta_{X_i}(\tilde{v})(|X_i| - 1).
\]
\end{proposition}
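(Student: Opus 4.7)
The plan is to reduce the statement to repeated application of Corollary~\ref{cor:dim}, one block at a time, by recognizing the orbit
\[
\{\pi(\tilde{v}):\pi\in S(X_1)\cdots S(X_h)\}
\]
as a Cartesian product across the blocks $X_1,\ldots,X_h$, together with the fixed complement $Y := [n]\setminus(X_1\cup\cdots\cup X_h)$.

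First I would write each $\pi\in S(X_1)\cdots S(X_h)$ uniquely as a commuting product $\pi = \sigma_1\sigma_2\cdots\sigma_h$ with $\sigma_i\in S(X_i)$, which is possible because the $X_i$ are disjoint and so the $\sigma_i$ pairwise commute. Setting $\tilde{v}^{(i)}:=\proj_{X_i}(\tilde{v})$, one then checks immediately that $\proj_{X_i}(\pi(\tilde{v}))=\sigma_i(\tilde{v}^{(i)})$ and $\proj_Y(\pi(\tilde{v}))=\proj_Y(\tilde{v})$. Consequently, under the coordinate direct-sum decomposition ${\reals}^n\cong{\reals}^Y\oplus{\reals}^{X_1}\oplus\cdots\oplus{\reals}^{X_h}$, the orbit is identified with
\[
\{\proj_Y(\tilde{v})\}\times\prod_{i=1}^h\{\sigma_i(\tilde{v}^{(i)}):\sigma_i\in S(X_i)\}.
\]

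Next I would invoke the standard fact that the convex hull of a Cartesian product of finite sets living in a direct sum of coordinate subspaces is itself the Cartesian product of their convex hulls. Applying this and recognising $\conv\{\sigma_i(\tilde{v}^{(i)}):\sigma_i\in S(X_i)\}$ as precisely $P_{|X_i|-1}(\tilde{v}^{(i)})$ yields
\[
\conv\{\pi(\tilde{v}):\pi\in S(X_1)\cdots S(X_h)\}=\{\proj_Y(\tilde{v})\}\times\prod_{i=1}^h P_{|X_i|-1}(\tilde{v}^{(i)}).
\]
Since the dimension of a product of polytopes is the sum of the dimensions and the $Y$-factor is a single point, this reduces the claim to $\sum_{i=1}^h\dim P_{|X_i|-1}(\tilde{v}^{(i)})$, and Corollary~\ref{cor:dim} evaluates each summand as $|X_i|-1$ when the coordinates of $\tilde{v}^{(i)}$ are not all equal and as $0$ otherwise. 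Folding these two cases uniformly into $\delta_{X_i}(\tilde{v})(|X_i|-1)$, as in (\ref{eqn:indicator}), delivers the claimed formula.

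The main obstacle is really just verifying that the orbit decomposes as an independent product of block-orbits with no ``entanglement'' across blocks; this hinges on the commutativity of $S(X_i)$ and $S(X_j)$ for $i\neq j$ when the $X_i$ are disjoint, and it is what lets the $Y$-coordinates stay fixed while each block varies freely. Once this factorisation is justified, the rest is a routine appeal to Corollary~\ref{cor:dim} together with dimension-additivity for Cartesian products of polytopes.
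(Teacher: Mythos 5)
Your proposal is correct and matches the paper's (largely implicit) argument: the paper states Proposition~\ref{prp:dim-gen} as an immediate consequence of Corollary~\ref{cor:dim}, and the intended reasoning is exactly your blockwise factorisation of the orbit over $Y,X_1,\ldots,X_h$, with convex hulls and dimensions adding across the coordinate direct sum. You have simply spelled out the product decomposition and the appeal to Corollary~\ref{cor:dim} on each block, which is the same route.
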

Note that the above Proposition~\ref{prp:dim-gen} holds
in particular for every partition $X_1\cup\cdots\cup X_h = [n]$
of $[n]$.

We seek to describe the face lattice of the polytope $P_{n-1}(\tilde{v})$
where $\tilde{v}$ has non-negative real entries,
in a similar fashion as was done when describing the faces
of the standard permutahedron $\Pi_{n-1}$ earlier, namely by considering
the max set of a linear functional restricted to the polytope.
In~\cite{Post-Vic-Laur} the combinatorial structure of classes
of polytopes that include those of $P_{n-1}(\tilde{v})$
is studied in great depth. In particular, the $f$-, $h$- and
$\gamma$-vectors of these classes of polytopes are studied.
Many explicit formulae for the $h$- and $\gamma$-vectors 
involving descent statistics of permutations are given.
Here we take a different OR-like (operations research)
approach, involving linear functionals, 
that more directly relates to the characterization
of the faces as presented in Observation~\ref{obs:coarse}.

We say that two vectors $\tilde{a}$ and $\tilde{c}$ have
the same {\em order type} if $a_i\leq a_j \Leftrightarrow c_i\leq c_j$
for all $i,j\in\{1,\ldots,n\}$. The order type defines an 
equivalence relation among vectors $\tilde{c}\in{\reals}^n$, 
and clearly all vectors
of the same type yield the same face of $P_{n-1}(\tilde{v})$,
as the set of maximum points of $L_{\tilde{c}}$ 
when restricted to $P_{n-1}(\tilde{v})$.
Denote by $[\tilde{c}]$ the order type equivalence
class of the vector $\tilde{c}\in{\reals}^n$. So, if $F(\tilde{c})$
denotes the unique face as the set of maximum points of $L_{\tilde{c}}$ 
restricted to $P_{n-1}(\tilde{v})$, then, by the above, 
$F(\tilde{c}) = F(\tilde{c}')$ whenever $[\tilde{c}] = [\tilde{c}']$,
and hence the face $F([\tilde{c}])$ is well defined. 
Also note that $P_{n-1}(\tilde{v})$ and $P_{n-1}(\tilde{v}')$ 
have the same combinatorial type iff $[\tilde{v}] = [\tilde{v}']$.
Finally, if $\delta_X(\tilde{c})$ denote the indicator function 
from (\ref{eqn:indicator}), then clearly $\delta_X([\tilde{c}])$ 
is well defined. 

As real addition is commutative, then 
for any permutation $\pi\in S_n$ we have
\[
L_{\pi(\tilde{c})}(\pi(\tilde{x})) 
= \pi(\tilde{c})\cdot\pi(\tilde{x}) 
= \tilde{c}\cdot\tilde{x}
= L_{\tilde{c}}(\tilde{x}).
\]
Hence, if let $\pi(F) = \{\pi(\tilde{x}) : \tilde{x}\in F\}$,
then clearly we have the following.
\begin{observation}
\label{obs:perm-face}
For any permutation $\pi\in S_n$ we have
$\pi(F([\tilde{a}])) = F([\pi(\tilde{a})])$,
and, in particular, $P_{n-1}(\tilde{v}) = P_{n-1}(\pi(\tilde{v}))$.
\end{observation}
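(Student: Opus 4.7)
The proof will be almost immediate given the identity $L_{\pi(\tilde{c})}(\pi(\tilde{x})) = L_{\tilde{c}}(\tilde{x})$ that was just established; the plan is simply to package it carefully. First I would dispose of the parenthetical claim $P_{n-1}(\tilde{v}) = P_{n-1}(\pi(\tilde{v}))$. By the definition of $P_{n-1}(\tilde{v})$ as the convex hull of the $S_n$-orbit of $\tilde{v}$, we have
\[
P_{n-1}(\pi(\tilde{v})) = \conv\{\sigma(\pi(\tilde{v})) : \sigma\in S_n\} = \conv\{\tau(\tilde{v}) : \tau\in S_n\} = P_{n-1}(\tilde{v}),
\]
using the substitution $\tau = \sigma\pi$ and the fact that right-multiplication by $\pi$ is a bijection on $S_n$. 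In particular, $\pi$ itself acts as a bijection on the vertex set of $P_{n-1}(\tilde{v})$, and hence $\pi$ permutes $P_{n-1}(\tilde{v})$, i.e.\ $\pi(P_{n-1}(\tilde{v})) = P_{n-1}(\tilde{v})$.

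Next I would turn to the equality $\pi(F([\tilde{a}])) = F([\pi(\tilde{a})])$. By definition, $F([\tilde{a}])$ is the set of maximizers of $L_{\tilde{a}}$ on $P_{n-1}(\tilde{v})$, and similarly $F([\pi(\tilde{a})])$ is the set of maximizers of $L_{\pi(\tilde{a})}$ on the same polytope (using the invariance established in the previous paragraph). For any $\tilde{y}\in P_{n-1}(\tilde{v})$, the displayed identity preceding the observation gives
\[
L_{\pi(\tilde{a})}(\tilde{y}) = L_{\pi(\tilde{a})}\bigl(\pi(\pi^{-1}(\tilde{y}))\bigr) = L_{\tilde{a}}(\pi^{-1}(\tilde{y})),
\]
and $\pi^{-1}(\tilde{y})$ again belongs to $P_{n-1}(\tilde{v})$ by the $S_n$-invariance. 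Therefore
\[
\max_{\tilde{y}\in P_{n-1}(\tilde{v})} L_{\pi(\tilde{a})}(\tilde{y}) = \max_{\tilde{z}\in P_{n-1}(\tilde{v})} L_{\tilde{a}}(\tilde{z}),
\]
and equality in $L_{\pi(\tilde{a})}(\tilde{y})$ is attained precisely when $\pi^{-1}(\tilde{y})\in F([\tilde{a}])$, i.e.\ when $\tilde{y}\in\pi(F([\tilde{a}]))$. This yields the set equality $F([\pi(\tilde{a})]) = \pi(F([\tilde{a}]))$.

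There is no genuine obstacle here; the only subtlety worth flagging is the careful use of the polytope's $S_n$-symmetry to justify that the maximum of $L_{\pi(\tilde{a})}$ is being taken over the same underlying set on which $L_{\tilde{a}}$ was maximized. Once that is in place, the argument reduces to a one-line substitution via the given $L$-identity, and the "in particular" clause follows purely from the orbit description of $P_{n-1}(\tilde{v})$.
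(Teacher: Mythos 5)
Your proof is correct and follows essentially the same route as the paper, which derives the observation directly from the identity $L_{\pi(\tilde{c})}(\pi(\tilde{x})) = L_{\tilde{c}}(\tilde{x})$ stated just before it; you have simply spelled out the substitution $\tilde{y}\mapsto\pi^{-1}(\tilde{y})$ and the $S_n$-invariance of $P_{n-1}(\tilde{v})$ that the paper leaves as "clearly". The extra care in first establishing $P_{n-1}(\pi(\tilde{v})) = P_{n-1}(\tilde{v})$ from the orbit description is a sound (and slightly more explicit) way to justify that both maximizations occur over the same polytope.
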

Consider the polytope $P_{n-1}(\tilde{v})$ for a given vector
$\tilde{v}$ with non-negative real entries. To describe the 
face $F(\tilde{c})$ of $P_{n-1}(\tilde{v})$, we first note
that $\tilde{c}$ yields a unique ordered partition of $[n]$
\begin{equation}
\label{eqn:part}
[n] = X_1(\tilde{c})\cup\cdots\cup X_h(\tilde{c}),
\end{equation}
where $c_i=c_j$ for all $i,j\in X_{\ell}(\tilde{c})$ and
$c_i < c_j$ if $i\in X_{\ell}(\tilde{c})$ and $j\in X_{\ell'}(\tilde{c})$
where $\ell < \ell'$. Note that $L_{\tilde{c}}$ restricted to 
the set of vertices ${\mathbf{F}}_0(P_{n-1}(\tilde{v}))$
takes its maximum value on those vertices $\tilde{u}$, 
the order of whose entries 
are in agreement with the order of the entries of $\tilde{c}$,
that is $c_i < c_j\Rightarrow u_i\leq u_j$. This is clearly a necessary
and sufficient
condition $\tilde{u}\in {\mathbf{F}}_0(P_{n-1}(\tilde{v}))$ must satisfy
in order for $L_{\tilde{c}}(\tilde{u})$ to be a maximum value of $L_{\tilde{c}}$
when restricted to $P_{n-1}(\tilde{v})$. 
Formally we have a following description.
\begin{observation}
\label{obs:c-face}
For a given $\tilde{c}\in{\reals}^n$ the face of $P_{n-1}(\tilde{v})$ determined
by $[\tilde{c}]$ is given by
\begin{eqnarray*}
F([\tilde{c}]) 
& = & 
\conv(\{\tilde{u}\in {\mathbf{F}}_0(P_{n-1}(\tilde{v})) : 
c_i < c_j\Rightarrow u_i\leq u_j \}) \\
& = & 
\conv(\{\tilde{u}\in {\mathbf{F}}_0(P_{n-1}(\tilde{v})) : 
u_i < u_j\Rightarrow c_i\leq c_j \}).
\end{eqnarray*}
\end{observation}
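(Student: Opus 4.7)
My plan is to proceed from the standard principle that any face of a polytope equals the convex hull of the vertices it contains. Since $F([\tilde{c}])$ is by definition the set of points of $P_{n-1}(\tilde{v})$ at which $L_{\tilde{c}}$ attains its maximum, and since $P_{n-1}(\tilde{v}) = \conv\{\pi(\tilde{v}) : \pi \in S_n\}$ has all its vertices among the permutations of $\tilde{v}$, it is enough to describe precisely which vertices $\tilde{u}\in\mathbf{F}_0(P_{n-1}(\tilde{v}))$ maximize $L_{\tilde{c}}$.

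The core of the argument will be a one-swap rearrangement inequality. For two permutations $\tilde{u},\tilde{u}'$ of $\tilde{v}$ that differ only by a transposition $\tau = (i,j)$, the same computation used in the proof of Lemma~\ref{lmm:neq0} yields
\[
L_{\tilde{c}}(\tilde{u}) - L_{\tilde{c}}(\tau(\tilde{u})) = (c_i-c_j)(u_i-u_j).
\]
If some pair violates the condition ``$c_i<c_j\Rightarrow u_i\leq u_j$'' --- that is, $c_i<c_j$ but $u_i>u_j$ --- then swapping coordinates $i$ and $j$ produces a vertex on which $L_{\tilde{c}}$ is strictly larger, so $\tilde{u}$ cannot lie in $F([\tilde{c}])$. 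Conversely, if $\tilde{u}$ satisfies the condition for every pair, then, using the ordered partition $[n] = X_1(\tilde{c})\cup\cdots\cup X_h(\tilde{c})$ from~(\ref{eqn:part}), the restriction $\proj_{X_\ell}(\tilde{u})$ is forced to be a specific multiset of values of $\tilde{v}$ (independent of which vertex $\tilde{u}$ is chosen among those satisfying the condition), and since $\tilde{c}$ is constant on each $X_\ell(\tilde{c})$, all such vertices share the same value of $L_{\tilde{c}}$. This common value must therefore be the maximum, identifying $F([\tilde{c}])$ as the convex hull of those vertices satisfying the first stated condition.

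To finish, I note that the two displayed characterizations are logically equivalent pairwise in $(i,j)$: the contrapositive of ``$u_i<u_j\Rightarrow c_i\leq c_j$'' is ``$c_i>c_j\Rightarrow u_i\geq u_j$'', which upon swapping $i$ and $j$ reads ``$c_i<c_j\Rightarrow u_i\leq u_j$''. Hence the second description selects exactly the same set of vertices. I do not foresee a real obstacle; the only subtlety worth flagging is the asymmetry between the strict inequality on $\tilde{c}$ and the non-strict inequality on $\tilde{u}$, which is what correctly allows multiple maximizing vertices --- obtained by permuting coordinates within each block $X_\ell(\tilde{c})$ --- and hence recovers the whole face rather than a single vertex.
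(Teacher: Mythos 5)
Your argument is correct and is essentially the paper's own justification: the paper states this observation without a formal proof, appealing exactly to the agreement-of-orderings idea you make precise, and the one-swap computation $(c_i-c_j)(u_i-u_j)$ you invoke is the same one the paper uses in Lemma~\ref{lmm:neq0} and again in the argument leading to Theorem~\ref{thm:face-subset}. Your added details --- that the condition fixes the multiset of $\tilde{u}$-entries on each block $X_\ell(\tilde{c})$ so all admissible vertices share the maximal value, and that the two displayed conditions are contrapositives of one another after swapping $i$ and $j$ --- are accurate and fill in the "clearly" of the paper.
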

Clearly by Observation~\ref{obs:perm-face}, we can assume $\tilde{v}$
to be ordered in any way convenient for our purposes. In particular,
when describing the face $F([\tilde{c}])$ of $P_{n-1}(\tilde{v})$,
we can for simplicity assume that the order of $\tilde{v}$ agrees with 
that of $\tilde{c}$, so $v_i\leq v_j$ whenever $c_i < c_j$, that is
we can assume $\tilde{v} \in F(\tilde{c})$ by Observation~\ref{obs:c-face}.
In terms of the partition from (\ref{eqn:part}), we then
obtain another equivalent form by Proposition~\ref{prp:dim-gen}.  
\begin{proposition}
\label{prp:face-describe}
For a given $\tilde{c}\in{\reals}^n$ the face of $P_{n-1}(\tilde{v})$ 
determined by $[\tilde{c}]$ that contains $\tilde{v}$ is given by
\[
F([\tilde{c}]) = 
\conv(\{
\mu_1\cdots\mu_h(\tilde{v}) : \mu_i\in S(X_i(\tilde{c}))\subseteq S([n]), \ \
i\in \{1,\ldots,h\}
\}).
\]
In particular we have 
\[
\dim(F([\tilde{c}])) = 
\sum_{i=1}^h \delta_{X_i(\tilde{c})}(\tilde{v})(|X_i(\tilde{c})| - 1).
\]
\end{proposition}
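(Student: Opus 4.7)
The plan is to combine the vertex-level description of $F([\tilde{c}])$ from Observation~\ref{obs:c-face} with the dimension formula of Proposition~\ref{prp:dim-gen}. The hypothesis $\tilde{v}\in F([\tilde{c}])$, which may always be arranged via Observation~\ref{obs:perm-face} without changing the polytope, says that $\tilde{v}$ is ordered compatibly with $\tilde{c}$: whenever $c_i<c_j$ one has $v_i\leq v_j$. Hence, writing the partition from (\ref{eqn:part}) as $[n]=X_1(\tilde{c})\cup\cdots\cup X_h(\tilde{c})$, every $v$-entry at a position in $X_a(\tilde{c})$ is $\leq$ every $v$-entry at a position in $X_b(\tilde{c})$ when $a<b$.

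The heart of the argument is the set equality
\[
\mathbf{F}_0(F([\tilde{c}]))=\{\mu_1\cdots\mu_h(\tilde{v}):\mu_i\in S(X_i(\tilde{c}))\},
\]
from which the displayed convex-hull formula drops out at once via Observation~\ref{obs:c-face}. The inclusion ``$\supseteq$'' is immediate: any $\mu_i$ permutes only positions inside its own block $X_i(\tilde{c})$, so for indices $i\in X_a(\tilde{c})$ and $j\in X_b(\tilde{c})$ with $a<b$, the coordinate $u_i$ is a $v$-value drawn from $X_a(\tilde{c})$ and $u_j$ is a $v$-value drawn from $X_b(\tilde{c})$, forcing $u_i\leq u_j$ by the WLOG ordering of $\tilde{v}$.

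The main obstacle is the reverse inclusion. For it I would use that every vertex of $P_{n-1}(\tilde{v})$ is a rearrangement of the multiset of entries of $\tilde{v}$, so the constraint $c_i<c_j\Rightarrow u_i\leq u_j$ from Observation~\ref{obs:c-face} forces the multiset of the $u$-entries at positions $X_a(\tilde{c})$ to lie below that of the $u$-entries at positions $X_b(\tilde{c})$ whenever $a<b$. A block-by-block induction on $a$, repeatedly picking off the $|X_a(\tilde{c})|$ smallest remaining entries from the total multiset, then shows $\{u_k:k\in X_a(\tilde{c})\}=\{v_k:k\in X_a(\tilde{c})\}$ as multisets for every $a$. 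Ties on the boundary between two consecutive blocks cause no difficulty, since swapping equal values among positions yields the same tuple. Once the per-block multisets are pinned down, the only remaining freedom is an independent choice of $\mu_a\in S(X_a(\tilde{c}))$ inside each block. The dimension statement is then a direct application of Proposition~\ref{prp:dim-gen} to the subsets $X_1(\tilde{c}),\ldots,X_h(\tilde{c})$, which in fact partition $[n]$, so no further computation is required.
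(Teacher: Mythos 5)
Your proposal is correct and follows essentially the same route as the paper: reduce via Observation~\ref{obs:perm-face} to the case $\tilde{v}\in F([\tilde{c}])$, identify the vertices of $F([\tilde{c}])$ through the order-compatibility criterion of Observation~\ref{obs:c-face} as exactly the block-wise permutations $\mu_1\cdots\mu_h(\tilde{v})$, and then read off the dimension from Proposition~\ref{prp:dim-gen}. The paper leaves the multiset/block bookkeeping implicit, whereas you spell it out, but there is no substantive difference in approach.
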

We note that if $\tilde{c}$ and $\tilde{v}$ are both 
ordered, $c_1\leq\cdots\leq c_n$ and $v_1\leq\cdots\leq v_n$, and
$\delta_{X_{\ell}(\tilde{c})\cup X_{\ell + 1}(\tilde{c})}(\tilde{v}) = 0$
for some $\ell$, then we can replace each $c_i$ where 
$i\in X_{\ell}(\tilde{c})$ and $c_j$ where $j\in X_{\ell+1}(\tilde{c})$
with a single value between $c_i$ and $c_j$, say $(c_i+c_j)/2$, 
and thereby obtaining a vector $\tilde{c}'$ with a strictly
smaller support than $\tilde{c}$ such that $F([\tilde{c}]) = F([\tilde{c}'])$.
In this case we have merged the two consecutive intervals 
$X_{\ell}(\tilde{c})$ and $X_{\ell +1}(\tilde{c})$ into one interval
without altering the corresponding face of $P_{n-1}(\tilde{v})$
that these vectors determine.
\begin{definition}
\label{def:v-reduced}
For ordered vectors $\tilde{c},\tilde{v}\in {\reals}^n$ we say
that $\tilde{c}$ is {\em $\tilde{v}$-reduced} if 
for every $\ell\in\{1,\ldots, h\}$ from (\ref{eqn:part}) we have 
$\delta_{X_{\ell}(\tilde{c})\cup X_{\ell + 1}(\tilde{c})}(\tilde{v}) = 1$.
\end{definition}

Turning our attention now back to the more specific PI-family 
${\cal{P}}_n = \{\Pi_{n-1}(k-1)\}_{k=2,\ldots,n}$ we note that
vectors of distinct order type can yield the same face of $\Pi_{n-1}(k-1)$ 
when $k\geq 3$, but for $k=2$ (when $\Pi_{n-1}(k-1) = \Pi_{n-1}$, 
the standard permutahedron)
then each face corresponds uniquely to the order type of the vector
yielding it. 
\begin{observation}
\label{obs:vec-type-face}
For every $k\geq 2$ the map 
$[\tilde{c}]\mapsto F([\tilde{c}])\in {\mathbf{F}}(\Pi_{n-1}(k-1))$ 
is always surjective, and it is injective (and hence bijective) iff $k=2$.
In particular, the total number of order types $[\tilde{c}]$ where
$\tilde{c}\in{\reals}^n$ is the same as $|{\mathbf{F}}(\Pi_{n-1})|$, 
the total number of faces (including the polytope itself) of $\Pi_{n-1}$. 
\end{observation}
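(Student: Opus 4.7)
The plan is to handle the three assertions in sequence — surjectivity for all $k\geq 2$, injectivity when $k=2$, and non-injectivity when $k\geq 3$ — and then read off the enumerative tail from the $k=2$ bijection. Surjectivity is immediate from the standard fact recalled just after~(\ref{eqn:Minks-basic}) that every nonempty face of a polytope is the max-set of some linear functional $L_{\tilde{c}}$, together with the fact that $F(\tilde{c})$ depends only on $[\tilde{c}]$; so the map lands on every face of $\Pi_{n-1}(k-1)$ without any hypothesis on $k$.

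For injectivity in the $k=2$ case, I would argue via two separate bijections with ordered set partitions of $[n]$ and identify the map in question as their composition. Order types $[\tilde{c}]$ on $\reals^n$ are in bijection with ordered set partitions of $[n]$ via the correspondence $[\tilde{c}]\mapsto (X_1(\tilde{c}),\ldots,X_h(\tilde{c}))$ from~(\ref{eqn:part}). Faces of $\Pi_{n-1}$ are in bijection with ordered set partitions of $[n]$ via the description recalled just before Observation~\ref{obs:coarse}. A direct check using Proposition~\ref{prp:face-describe} with $\tilde{v} = \tilde{v}_{n-1}(1) = (0,1,\ldots,n-1)$, whose entries are pairwise distinct so that $\delta_X(\tilde{v}) = 1$ for every $X\subseteq[n]$ with $|X|\geq 2$, then shows that $[\tilde{c}]\mapsto F([\tilde{c}])$ is precisely the composition of these two bijections, hence itself a bijection.

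For non-injectivity when $k\geq 3$, I would exhibit a concrete collision. Let $\tilde{v} = \tilde{v}_{n-1}(k-1)$; since $k-1\geq 2$, we have $v_1 = v_2 = 0$. Compare $\tilde{c}$ with $c_1 < c_2 < \cdots < c_n$ to $\tilde{c}'$ with $c'_1 = c'_2 < c'_3 < \cdots < c'_n$: their order types differ, yet Proposition~\ref{prp:face-describe} forces $\dim F([\tilde{c}]) = 0$ trivially and $\dim F([\tilde{c}']) = \delta_{\{1,2\}}(\tilde{v})\cdot 1 = 0$ since $v_1 = v_2$, so both faces equal the single vertex $\tilde{v}$ and injectivity fails.

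Finally, the enumerative tail follows at once from the $k=2$ bijection: the number of order types of vectors in $\reals^n$ equals $|\mathbf{F}(\Pi_{n-1})|$. The one mildly delicate step is verifying that the bijection-composition in the $k=2$ case is the intended map — that the ordered partition associated with $[\tilde{c}]$ via~(\ref{eqn:part}) really matches the ordered partition associated with $F([\tilde{c}])$ in the Ziegler description — but this reduces to a direct reading of Proposition~\ref{prp:face-describe} in the regime where all entries of $\tilde{v}$ are distinct.
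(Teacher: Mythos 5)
Your proposal is correct and follows essentially the same route the paper intends: surjectivity from every nonempty face being the max-set of some $L_{\tilde{c}}$ with $F(\tilde{c})$ depending only on $[\tilde{c}]$, the $k=2$ bijection via ordered set partitions matched against the Ziegler description through Proposition~\ref{prp:face-describe} (all entries of $\tilde{v}_{n-1}(1)$ distinct), and non-injectivity for $k\geq 3$ from the repeated zero entries of $\tilde{v}_{n-1}(k-1)$ collapsing distinct order types onto the same vertex. The paper states this as an observation with exactly this reasoning implicit in the surrounding discussion, so nothing further is needed.
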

Since $\Pi_{n-1}(k-1) = P_{n-1}(\tilde{v}_{n-1}(k-1))$
where $\tilde{v}_{n-1}(k-1) 
= \left(0,\ldots,0,\binom{k-1}{k-1},\ldots,\binom{n-1}{k-1}\right)$
from Lemma~\ref{lmm:key}, then when considering a face 
$F([\tilde{c}])$ of $\Pi_{n-1}(k-1)$
we can assume $\tilde{c}$ to be $\tilde{v}_{n-1}(k-1)$-reduced.
Therefore we can assume the partition (or rather the disjoint union) 
of $[n]$ induced by
$\tilde{c}$ from (\ref{eqn:part}) to have the form
$[n] = Z(\tilde{c})\cup X_0(\tilde{c})\cup\cdots\cup X_p(\tilde{c})$ 
where $Z(\tilde{c})$ consists of those indices from $[n]$ whose
coordinates of $F(\tilde{c})$ all are zero, which could potentially
be empty. In fact, letting $n_{\ell} = |X_{\ell}(\tilde{c})|$, 
we see that $F(\tilde{c})$ is the unique face that is
the convex combination of those vertices of $\Pi_{n-1}(k-1)$ 
where the $n_p$ largest entries occur in coordinates from
$X_p(\tilde{c})$, the next largest entries occur in coordinates from
$X_{p-1}(\tilde{c})$, etc., the $n_0$ 2nd smallest entries, not all zero
(but where some could be zero), occur in coordinates from 
$X_0(\tilde{c})$ and lastly the smallest
entries, all zero, occur in coordinates from $Z(\tilde{c})$.
As noted, with this setup $Z(\tilde{c})$ could be empty.
We therefore must relax the notion
of partition in order to obtain a description of the face
$F(\tilde{c}) = F([\tilde{c}])$. 
\begin{definition}
\label{def:OPP}
For $n\in\nats$ call a tuple $(Z,X_0,\ldots, X_{p})$
an {\em ordered pseudo-partition} of $[n]$ (or an {\em OPP} for short)  
if $Z\cup X_0\cup\cdots\cup X_{p} = [n]$ is a disjoint union and
either $(X_0,\ldots, X_{p})$, or $(Z,X_0,\ldots, X_{p})$
is an ordered (proper) partition of $[n]$.
\end{definition}
{\sc Remark:} Although the above Definition~\ref{def:OPP} is motivated by a 
vector in $\tilde{c}\in {\reals}^n$, and its dot-product with
a vertex $\tilde{v}\in\Pi_{n-1}(k-1)$, the definition of an OPP does not 
depend on it. 
\begin{theorem}
\label{thm:face-gen-perm}
For $n\in\nats$ and $k\in\{1,\ldots,n\}$, then every $d$-face
of $\Pi_{n-1}(k-1)$ is in one-to-one correspondence with
an OPP $(Z,X_0,\ldots,X_{p})$ of $[n]$ where 
(i) $0\leq |Z| \leq k-1$, 
(ii) $k\leq |Z| + |X_0| \leq n$, and
(iii) $n - |Z| - p - 1 = d$.
\end{theorem}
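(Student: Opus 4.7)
The plan is to build the bijection via a canonical $\tilde{v}$-reduced representative, where $\tilde{v} = \tilde{v}_{n-1}(k-1)$ so that $\Pi_{n-1}(k-1) = P_{n-1}(\tilde{v})$ by Lemma~\ref{lmm:key}. By Observation~\ref{obs:perm-face} and Proposition~\ref{prp:face-describe}, every face has the form $F([\tilde{c}])$ with $\tilde{v} \in F([\tilde{c}])$ (so $\tilde{v}$ is sorted consistently with $\tilde{c}$), and by applying the merging procedure described just before Definition~\ref{def:v-reduced} I may further assume $\tilde{c}$ is $\tilde{v}$-reduced without changing $F$. The key combinatorial observation is that $\tilde{v}$ has zeros precisely at positions $\{1,\ldots,k-1\}$ and pairwise distinct nonzero entries elsewhere, so the $\tilde{v}$-reduced condition translates into: no two consecutive blocks of the induced partition of $[n]$ both lie inside $\{1,\ldots,k-1\}$.

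From such a reduced $\tilde{c}$ with induced partition $Y_1\cup\cdots\cup Y_h = [n]$, I extract an OPP as follows. Consistency of the sorting forces the zero positions to occupy the earliest blocks, and the reduced condition then forces that only $Y_1$ can lie entirely inside $\{1,\ldots,k-1\}$. Setting $Z := Y_1$ in that case and $Z := \emptyset$ otherwise, and relabeling the remaining blocks as $X_0,X_1,\ldots,X_p$ in order, I have $Z \subseteq \{1,\ldots,k-1\}$, giving (i); reducedness at the pair $(Z,X_0)$ forces $X_0$ to contain at least one nonzero position, while $Z\cup X_0$ must still contain every zero position, yielding $|Z|+|X_0|\geq (k-1)+1 = k$, which is (ii). For (iii), Proposition~\ref{prp:face-describe} gives $\dim F = \sum_i \delta_{Y_i}(\tilde{v})(|Y_i|-1)$; the $Z$-contribution vanishes, and for each $X_i$ the contribution equals $|X_i|-1$ uniformly (either $|X_i|=1$ and both factors are zero, or $|X_i|\geq 2$ and $\delta_{X_i}(\tilde{v})=1$ since the nonzero entries of $\tilde{v}$ are distinct), so $\dim F = \sum_{i=0}^p(|X_i|-1) = n-|Z|-(p+1)$.

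For the converse direction, given any OPP $(Z,X_0,\ldots,X_p)$ satisfying (i)--(iii), I take $\tilde{c}$ with strictly increasing constant values on $Z,X_0,\ldots,X_p$; condition (ii) guarantees $\tilde{v}$-reducedness at the critical pair $(Z,X_0)$, while the remaining consecutive pairs are automatically reduced because each $X_i$ with $i\geq 1$ lies entirely in nonzero positions, so the face $F([\tilde{c}])$ has the prescribed dimension by the same calculation. The extraction procedure above inverts this construction, completing the bijection. The hard part will be the canonicity step: verifying that the $\tilde{v}$-reduced representative of a given face $F$ is unique up to order type, so that the OPP $(Z,X_0,\ldots,X_p)$ is intrinsically determined by $F$. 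I plan to handle this by recovering $Z$ intrinsically as the set of coordinate positions where every vertex of $F$ vanishes, and the remaining blocks as the equivalence classes on $[n]\setminus Z$ induced by the shared vertex-value pattern of $F$.
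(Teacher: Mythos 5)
Your proposal is correct and follows essentially the same route as the paper's own proof: both pass from a face to a canonical representing vector (your $\tilde{v}$-reduced representative versus the paper's rule of cutting the bottom blocks of total size at most $k-1$ off into $Z$), read the OPP off its blocks, and obtain condition (iii) from Proposition~\ref{prp:face-describe}. The canonicity step you defer is precisely the point the paper dispatches with ``Clearly,'' and your sketched plan does close it: $Z$ is exactly the set of coordinates vanishing at every vertex of the face, and the blocks $X_0,\ldots,X_p$ are recovered in order because distinct blocks receive disjoint sets of values of $\tilde{v}_{n-1}(k-1)$.
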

\begin{proof}
From an OPP ${\cal{P}} = (Z,X_0,\ldots,X_{p})$ of $[n]$
satisfying the conditions (i) -- (iii) in Theorem~\ref{thm:face-gen-perm}
above, we obtain a vector $\tilde{c} = \tilde{c}({\cal{P}})$ with 
$\proj_l(\tilde{c}) = 0$ if $l\in Z$ and 
$\proj_l(\tilde{c}) = i+1$ if $l\in X_i$.
In this case the face $F(\tilde{c})$ is exactly 
the convex combination of those vertices of $\Pi_{n-1}(k-1)$ where
the largest $|X_{p}|$ entries occur in coordinates from $X_{p}$,
the largest $|X_{p-1}|$ entries of the remaining $n-|X_{p}|$ ones occur
in coordinates from $X_{p-1}$ etc, the largest $|X_0|$ entries of the 
remaining $n-(|X_1|+\cdots +|X_{p}|)$ ones occur in coordinates
from $X_0$, and finally, all the coordinates from $Z$ contain 
only zeros. Hence, each OPP ${\cal{P}}$ yields a unique 
face $F(\tilde{c}({\cal{P}}))$.

On the other hand, every (proper) face $F$ of $\Pi_{n-1}(k-1)$
has the form $F([\tilde{c}])$
for some $\tilde{c}\in{\reals}^n$ where $\supp(\tilde{c})=[h]$ 
for some $h\in[n]$ Viewing $\tilde{c}$ as a function $c : [n]\rightarrow [h]$
with $c(i) = c_i$ for each $i\in [n]$, we obtain a OPP 
${\cal{P}} = {\cal{P}}(\tilde{c})$ as in the following way.

Letting $p\geq 0$ be the least integer with
$|X_0| + \cdots + |X_{p}| \geq n-k+1$, where $X_i = c^{-1}(h-p+i)$
for each $i=0,\ldots,p$, and $Z = c^{-1}([h-p-1])$, 
will give us our desired OPP ${\cal{P}}(\tilde{c}) = (Z,X_0,\ldots,X_{p})$.

Clearly we have ${\cal{P}}(F(\tilde{c}({\cal{P}}))) = {\cal{P}}$ and
$F(\tilde{c}({\cal{P}}(F(\tilde{c})))) = F(\tilde{c})$. 
Note that in general $\tilde{c}({\cal{P}}(F(\tilde{c})))\neq\tilde{c}$,
but they yield the same face. This proves the one-to-one
correspondence between OPPs and (proper) faces of $\Pi_{n-1}(k-1)$.

Finally, by Proposition~\ref{prp:face-describe}
if $F = F(\tilde{c}({\cal{P}}))$
is the unique face obtained from the OPP ${\cal{P}}$, then 
\[
\dim(F) = \sum_{i=0}^{p}(|X_i| - 1) = 
|X_0| + \cdots + |X_{p}| - p - 1 =
n - |Z| - p - 1,
\]
which completes the proof.
\end{proof}
By Theorem~\ref{thm:face-gen-perm} we can derive the $f$-polynomial
of $\Pi_{n-1}(k-1)$ by enumerating all OPP ${\cal{P}}$ satisfying
(i) and (ii) in Theorem~\ref{thm:face-gen-perm} 
with $d = n - |Z| - p - 1$ being a given fixed number.
For disjoint $Z, X_0\subseteq [n]$ there are $\stirling{n-|Z|-|X_0|}{p}p!$
ordered partitions $(X_1,\ldots,X_p)$ of the remaining elements
of $[n]\setminus(Z\cup X_0)$.

Letting $i = |Z| \in \{0,\ldots,k-1\}$ and $j = |X_0|$, we 
get by Theorem~\ref{thm:face-gen-perm} that $i\in\{0,\ldots,k-1\}$
and $i+j\in \{k,\ldots,n\}$.
Hence, each ordered $(X_1,\ldots,X_p)$ of the remaining $n-i-j$
elements from $[n]\setminus(Z\cup X_0)$ will by 
Theorem~\ref{thm:face-gen-perm} yield a face of dimension $n - i - p - 1$.
As there are $\binom{n}{i}\binom{n-i}{j}$ ways of choosing a legitimate
pair $(Z,X_0)$, we have the following Proposition.
\begin{proposition}
\label{prp:face-poly}
The $f$-polynomial $f_{\Pi_{n-1}(k-1)}(x) = \sum_{i=0}^{n-1}f_i(\Pi_{n-1}(k-1))x^i$
of $\Pi_{n-1}(k-1)$ is given by 
\[
f_{\Pi_{n-1}(k-1)}(x) = \sum_{\substack{0\leq i\leq k-1 \\ k\leq i+j\leq n}}
\binom{n}{i}\binom{n-i}{j}\sum_{p=0}^{n-i-j}\stirling{n-i-j}{p}{p}!x^{n-i-p-1}.
\]
\end{proposition}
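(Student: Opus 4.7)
The plan is to read off the $f$-polynomial directly from the face-to-OPP bijection established in Theorem~\ref{thm:face-gen-perm}, by grouping OPPs according to the sizes $i = |Z|$, $j = |X_0|$, and the number $p$ of remaining blocks. Since Theorem~\ref{thm:face-gen-perm} asserts that every face of $\Pi_{n-1}(k-1)$ corresponds uniquely to an OPP $(Z, X_0, X_1, \ldots, X_p)$ of $[n]$ satisfying $0 \leq |Z| \leq k-1$ and $k \leq |Z|+|X_0| \leq n$, with the resulting face of dimension $n - |Z| - p - 1$, the coefficient $f_d(\Pi_{n-1}(k-1))$ is obtained by enumerating OPPs with $n - |Z| - p - 1 = d$.

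First I would fix $i \in \{0, \ldots, k-1\}$ and $j$ with $k \leq i+j \leq n$ (which, combined with $j \geq 0$, corresponds precisely to the admissible sizes for $|Z|$ and $|X_0|$). There are $\binom{n}{i}$ choices for the block $Z$, and then $\binom{n-i}{j}$ choices for the block $X_0$ from the remaining elements. The remaining $n-i-j$ elements of $[n]$ must be distributed into the ordered sequence $(X_1, \ldots, X_p)$ of nonempty blocks; the number of such ordered partitions into exactly $p$ parts is $\stirling{n-i-j}{p}p!$, by the standard interpretation of the Stirling number of the second kind. (When $i+j = n$ we take $p=0$ and the empty product contributes a single OPP, which matches $\stirling{0}{0}0! = 1$.)

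Next I would record that each such OPP contributes one face of dimension $n - i - p - 1$, so the total contribution to $f_{\Pi_{n-1}(k-1)}(x)$ is
\[
\binom{n}{i}\binom{n-i}{j}\stirling{n-i-j}{p}p!\,x^{n-i-p-1}.
\]
Summing over all admissible triples $(i,j,p)$ exactly reproduces the displayed formula. The only potential pitfall is verifying that the index ranges match: in particular, that the implicit lower bound $j \geq \max(0, k-i)$ is enforced by the constraint $k \leq i+j$, that the upper bound $j \leq n-i$ is enforced by $i+j \leq n$, and that $p$ ranges over $\{0, 1, \ldots, n-i-j\}$, which is automatic from the definition of $\stirling{n-i-j}{p}$.

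The main obstacle is really just bookkeeping — confirming that the enumeration of OPPs under the conditions of Theorem~\ref{thm:face-gen-perm} aligns perfectly with the constraints in the sum, and that the Stirling-times-factorial count correctly captures ordered partitions into $p$ nonempty parts (including the degenerate $p=0$ case when $Z \cup X_0 = [n]$).
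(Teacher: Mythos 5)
Your proposal is correct and follows essentially the same route as the paper: it invokes the OPP bijection of Theorem~\ref{thm:face-gen-perm}, counts the choices of $Z$ and $X_0$ by $\binom{n}{i}\binom{n-i}{j}$ with $i=|Z|$, $j=|X_0|$ constrained exactly as in the theorem, and counts the ordered partitions of the remaining $n-i-j$ elements by $\stirling{n-i-j}{p}p!$, each contributing a face of dimension $n-i-p-1$. The bookkeeping you flag (including the degenerate $p=0$ case) is handled identically in the paper's argument.
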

{\sc Remarks:}
(i) Note that the coefficients $[x^0]f_{\Pi_{n-1}(k-1)}(x)$ 
and $[x^1]f_{\Pi_{n-1}(k-1)}(x)$ agree with previous 
Propositions~\ref{prp:vert} and~\ref{prp:v-e-simple} on the number of
vertices and edges respectively. 
(ii) When $k=n$ we obtain 
\[
f_{\Pi_{n-1}(k-1)}(x) = f_{\Pi_{n-1}(n-1)}(x) = \frac{(x+1)^n-1}{x},
\]
the $f$-polynomial of the  $(n-1)$-dimensional simplex.

By Propositions~\ref{prp:face-poly} and~\ref{prp:v-e-simple}
and Corollary~\ref{cor:simple-flag} we obtain the $\ell$-flag polynomial 
of $\Pi_{n-1}(k-1)$ in the following.
\begin{corollary}
\label{cor:1-2-ell}
For each $k\in\{2,\ldots,n\}$ the $\ell$-flag polynomial 
${\tilde{f}}^{\ell}_{\Pi_{n-1}(k-1)}(\tilde{x})$ of $\Pi_{n-1}(k-1)$
is given by 
\[
{\tilde{f}}^{\ell}_{\Pi_{n-1}(k-1)}(\tilde{x}) = 
(x_2 + \cdots + x_{\ell} + 1)^{n-1}
f_{\Pi_{n-1}(k-1)}\left(\frac{x_1}{x_2+\cdots+x_{\ell}+1}\right),
\]
where $f_{\Pi_{n-1}(k-1)}$ is the $f$-polynomial of $\Pi_{n-1}(k-1)$ 
given in Proposition~\ref{prp:face-poly}.
\end{corollary}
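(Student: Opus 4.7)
The plan is to recognize the statement as an immediate specialization of the general fact already derived in Corollary~\ref{cor:simple-flag}, applied to $P = \Pi_{n-1}(k-1)$. Recall that the derivation leading to Corollary~\ref{cor:simple-flag} used only two ingredients about $P$: that $P$ is a simple polytope, and that $\dim(P) = d$. Both of these are available for $\Pi_{n-1}(k-1)$.

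First I would invoke Proposition~\ref{prp:v-e-simple}, which shows that $\Pi_{n-1}(k-1)$ is simple for every admissible pair $(n,k)$ (each vertex has exactly $n-1$ neighbors, distributed among the two edge types described there). Next I would invoke Corollary~\ref{cor:dim} (or equivalently Lemma~\ref{lmm:neq0}) to record that $\dim(\Pi_{n-1}(k-1)) = n-1$, since the defining vector $\tilde{v}_{n-1}(k-1)$ does not have all coordinates equal.

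Having these two facts, I would then apply Corollary~\ref{cor:simple-flag} in the particular case $D(d,s_1) \equiv 1$, under which ${\tilde{\phi}}^{\ell}_{P} = {\tilde{f}}^{\ell}_{P}$ and ${\tilde{\phi}}^{1}_{P} = f_{P}$ (this is the displayed specialization immediately after Corollary~\ref{cor:simple-flag}). Substituting $P = \Pi_{n-1}(k-1)$ and $d = n-1$ yields exactly
\[
{\tilde{f}}^{\ell}_{\Pi_{n-1}(k-1)}(\tilde{x}) =
(x_2 + \cdots + x_{\ell} + 1)^{n-1}
f_{\Pi_{n-1}(k-1)}\!\left(\frac{x_1}{x_2+\cdots+x_{\ell}+1}\right),
\]
and the explicit form of $f_{\Pi_{n-1}(k-1)}$ to plug in is the one given in Proposition~\ref{prp:face-poly}.

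There is no real obstacle here: the work has already been done in deriving Corollary~\ref{cor:simple-flag}. The only thing that needs to be checked is the applicability of that corollary to $\Pi_{n-1}(k-1)$, which amounts to quoting Proposition~\ref{prp:v-e-simple} for simplicity and Corollary~\ref{cor:dim} for the dimension. Consequently, the proof can be written in just a few lines, primarily as a citation of the two prior results followed by the substitution $d = n-1$.
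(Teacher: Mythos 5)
Your proposal is correct and matches the paper's own argument, which likewise obtains the corollary by combining the simplicity of $\Pi_{n-1}(k-1)$ (Proposition~\ref{prp:v-e-simple}), its $f$-polynomial (Proposition~\ref{prp:face-poly}), and Corollary~\ref{cor:simple-flag} in the case $D\equiv 1$ with $d=n-1$. Your extra citation of Corollary~\ref{cor:dim} for the dimension is a harmless, slightly more explicit touch.
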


We complete this section on the face lattice of $\Pi_{n-1}(k-1)$ 
by describing  the faces of $\Pi_{n-1}(k-1)$ in terms of OPPs 
of $[n]$ and when one face contains another in a similar 
fashion as in Observation~\ref{obs:coarse}.

From Observation~\ref{obs:c-face} we immediately obtain the
following.
\begin{proposition}
\label{prp:face-chain}
If $\tilde{a},\tilde{c}\in{\reals}^n$ are such
that $a_i\leq a_j\Rightarrow c_i\leq c_j$, then
for the corresponding faces of $P_{n-1}(\tilde{v})$ we have
$F([\tilde{a}])\subseteq F([\tilde{c}])$.
\end{proposition}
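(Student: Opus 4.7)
The plan is to invoke Observation~\ref{obs:c-face}, which describes each face $F([\tilde{c}])$ of $P_{n-1}(\tilde{v})$ as the convex hull of those vertices $\tilde{u}\in \mathbf{F}_0(P_{n-1}(\tilde{v}))$ satisfying the monotonicity condition $c_i<c_j\Rightarrow u_i\leq u_j$, and the analogous description for $F([\tilde{a}])$. Since every face of a polytope is the convex hull of the vertices it contains, it suffices to prove the set-theoretic inclusion at the level of vertex sets, namely that every vertex $\tilde{u}$ satisfying $a_i<a_j\Rightarrow u_i\leq u_j$ also satisfies $c_i<c_j\Rightarrow u_i\leq u_j$.

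To establish this vertex-set inclusion, I would take the contrapositive of the hypothesis. The assumption reads: for all $i,j$, $a_i\leq a_j\Rightarrow c_i\leq c_j$. Swapping the roles of $i$ and $j$ gives $a_j\leq a_i\Rightarrow c_j\leq c_i$, whose contrapositive is $c_i<c_j\Rightarrow a_i<a_j$. Now if $\tilde{u}$ is a vertex of $P_{n-1}(\tilde{v})$ lying in $F([\tilde{a}])$ and if $c_i<c_j$, then by the derived implication $a_i<a_j$, and so by the defining property of vertices of $F([\tilde{a}])$ we conclude $u_i\leq u_j$. Hence $\tilde{u}$ also satisfies the defining condition for $F([\tilde{c}])$, so it lies among the vertices of $F([\tilde{c}])$. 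Taking convex hulls then yields $F([\tilde{a}])\subseteq F([\tilde{c}])$.

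This argument is short and entirely formal once Observation~\ref{obs:c-face} is in hand; I do not anticipate any real obstacle. The only mildly delicate point is orienting the contrapositive correctly (swap indices first, then contrapose) so that one can chain together the two strict inequalities $c_i<c_j\Rightarrow a_i<a_j\Rightarrow u_i\leq u_j$ without tangling the mix of strict and weak inequalities that appear in the two defining conditions.
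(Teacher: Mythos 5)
Your proof is correct and takes essentially the same route as the paper, which simply states that the proposition follows immediately from Observation~\ref{obs:c-face}; your argument is precisely the fleshing-out of that deduction, passing from the hypothesis to $c_i<c_j\Rightarrow a_i<a_j$ and then comparing the vertex sets in the two convex-hull descriptions before taking convex hulls.
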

Proposition~\ref{prp:face-chain} yields
a sufficient condition for the vectors $\tilde{a}$ and $\tilde{c}$
that implies $F([\tilde{a}])\subseteq F([\tilde{c}])$.
We will now describe exactly the relationship between 
$\tilde{a}$ and $\tilde{c}$ such that 
for faces of $P_{n-1}(\tilde{v})$ we have
$F([\tilde{a}])\subseteq F([\tilde{c}])$.

Assume $\tilde{a},\tilde{c}\in {\reals}^n$ are such for
their corresponding faces of $P_{n-1}(\tilde{v})$ we have
that $F([\tilde{a}])\subseteq F([\tilde{c}])$. Since there
is a permutation $\alpha\in S_n$ with $\alpha(\tilde{a})$ ordered,
i.e.~$a_{\alpha(1)}\leq\cdots\leq a_{\alpha(n)}$, we can, for simplicity, 
assume $\tilde{a}$ is ordered $a_1\leq\cdots\leq a_n$. 
In this case the partition 
$[n] = X_1(\tilde{a})\cup\cdots\cup X_h(\tilde{a})$
of $[n]$ induced by $\tilde{a}$ as in (\ref{eqn:part})
is a union of consecutive intervals. By 
Observation~\ref{obs:perm-face} we can assume $\tilde{v}$
is ordered in the same way as $\tilde{a}$ is, so
$v_1\leq\cdots\leq v_n$. If $\delta_{X_{\ell}(\tilde{a})}(\tilde{v}) = 1$,
then for any transposition $\tau\in S(X_{\ell}(\tilde{a}))\subseteq S_n$ 
we have $\tau(\tilde{v})\in F([\tilde{a}])\subseteq F([\tilde{c}])$
and hence $\tilde{c}\cdot\tilde{v} = \tilde{c}\cdot\tau(\tilde{v})$.
By Lemma~\ref{lmm:neq0} we must have the following.
\begin{claim}
\label{clm:c=const}
$c_i = c_j$ for all $i,j\in X_{\ell}(\tilde{a})$ with 
$\delta_{X_{\ell}(\tilde{a})}(\tilde{v}) = 1$.
\end{claim}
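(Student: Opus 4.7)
The plan is to argue the claim by contradiction via Lemma~\ref{lmm:neq0}. Write $X = X_\ell(\tilde{a})$ and set $\tilde{c}_X = \proj_X(\tilde{c})$ and $\tilde{v}_X = \proj_X(\tilde{v})$. Suppose, for contradiction, that $\tilde{c}_X$ is not constant, i.e.\ $c_i \neq c_j$ for some $i,j \in X$. Since $\delta_X(\tilde{v}) = 1$ by assumption, $\tilde{v}_X$ is also not constant. Thus the pair $(\tilde{v}_X, \tilde{c}_X)$ satisfies the hypotheses of Lemma~\ref{lmm:neq0} inside ${\reals}^{|X|}$.

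Apply Lemma~\ref{lmm:neq0} to $(\tilde{v}_X, \tilde{c}_X)$ to extract a transposition $\tau_0$ on the index set $X$ with $\tilde{c}_X \cdot \tau_0(\tilde{v}_X) \neq \tilde{c}_X \cdot \tilde{v}_X$. Extend $\tau_0$ to an element of $S_n$ by declaring it to fix every index outside $X$. Then the coordinates of $\tilde{v}$ and $\tau_0(\tilde{v})$ agree outside $X$, so their contributions to $\tilde{c} \cdot \tilde{v}$ and $\tilde{c} \cdot \tau_0(\tilde{v})$ cancel, while on $X$ they differ by the nonzero quantity produced by the lemma. Therefore $\tilde{c} \cdot \tilde{v} \neq \tilde{c} \cdot \tau_0(\tilde{v})$.

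This contradicts the identity $\tilde{c} \cdot \tilde{v} = \tilde{c} \cdot \tau(\tilde{v})$ already established in the paragraph preceding the claim, valid for every $\tau \in S(X)$ because $\tau(\tilde{v}) \in F([\tilde{a}]) \subseteq F([\tilde{c}])$ forces $\tilde{v}$ and $\tau(\tilde{v})$ to achieve the common maximum value of $L_{\tilde{c}}$ on $F([\tilde{c}])$. Hence the assumption fails and $c_i = c_j$ for all $i,j \in X$, as claimed. The only mild subtlety in the argument is the local-to-global bridge for the transposition, namely that $L_{\tilde{c}}$ splits additively across the disjoint coordinate blocks $X$ and $[n]\setminus X$, which is what turns Lemma~\ref{lmm:neq0} applied inside $X$ into the contradiction in the ambient ${\reals}^n$; this bridge is routine, so there is no serious obstacle beyond recognizing the reduction.
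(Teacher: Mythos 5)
Your proposal is correct and is essentially the paper's argument: the paper also deduces $\tilde{c}\cdot\tilde{v}=\tilde{c}\cdot\tau(\tilde{v})$ for every transposition $\tau\in S(X_{\ell}(\tilde{a}))$ from $\tau(\tilde{v})\in F([\tilde{a}])\subseteq F([\tilde{c}])$ and then invokes Lemma~\ref{lmm:neq0} (in contrapositive) to force $\tilde{c}$ to be constant on $X_{\ell}(\tilde{a})$. You merely spell out the routine step the paper leaves implicit, namely applying the lemma to the projections onto $X_{\ell}(\tilde{a})$ and extending the resulting transposition to $S_n$ using the additive splitting of $L_{\tilde{c}}$ across coordinate blocks.
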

Assume now $i\in X_{\ell}(\tilde{a})$ and $j\in X_{\ell + 1}(\tilde{a})$
where $\tilde{a}$ is $\tilde{v}$-reduced.
In this case one of the following three conditions hold:
(i) $v_i < v_j$, 
(ii) $v_i = v_j$ and $\delta_{X_{\ell}(\tilde{a})}(\tilde{v}) = 1$, 
and hence there is an 
$i'\in  X_{\ell}(\tilde{a})$ with $v_{i'} < v_i = v_j$, or 
(iii) $v_i = v_j$ and $\delta_{X_{\ell+1}(\tilde{a})}(\tilde{v}) = 1$, 
and hence there is an 
$j'\in X_{\ell+1}(\tilde{a})$ with $v_i = v_j < v_{j'}$. 

In case (i) consider the transposition $\tau = (i,j)$. 
Since $\tilde{v}\in F([\tilde{a}])\subseteq F([\tilde{c}])$
we have $\tilde{c}\cdot\tilde{v} \geq \tilde{c}\cdot\tau(\tilde{v})$, and hence
$c_iv_i + c_jv_j \geq c_iv_j + c_jv_i$ or $(c_j - c_i)(v_j - v_i) \geq 0$.
Therefore $c_i \leq c_j$ must hold.

In case (ii) consider the transposition $\tau = (i',j)$. 
As in previous case we have
$\tilde{c}\cdot\tilde{v} \geq \tilde{c}\cdot\tau(\tilde{v})$, and hence
$c_{i'}v_{i'} + c_jv_j \geq c_{i'}v_j + c_jv_{i'}$ 
or $(c_j - c_{i'})(v_j - v_{i'}) \geq 0$.
Therefore $c_{i'} \leq c_j$ must hold, and so by Claim~\ref{clm:c=const}
$c_i = c_{i'} \leq c_j$ must hold.

Finally, in case (iii) consider the transposition $\tau = (i,j')$. 
As in previously we have
$\tilde{c}\cdot\tilde{v} \geq \tilde{c}\cdot\tau(\tilde{v})$, and hence
$c_iv_i + c_{j'}v_{j'} \geq c_iv_{j'} + c_{j'}v_i$ 
or $(c_{j'} - c_i)(v_{j'} - v_i) \geq 0$.
Therefore $c_i \leq c_{j'}$ must hold, and so by Claim~\ref{clm:c=const}
$c_i \leq c_{j'} = c_j$ must hold. Hence, we have the following.
\begin{claim}
\label{clm:c-ordered}
For a $\tilde{v}$-reduced $\tilde{a}$, 
if $i\in X_{\ell}(\tilde{a})$ and $j\in X_{\ell'}(\tilde{a})$ 
with $\ell < \ell'$, then $c_i\leq c_j$.
\end{claim}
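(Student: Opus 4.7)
The plan is to verify the claim first for consecutive parts ($\ell' = \ell+1$), which is exactly what the preceding three-case analysis accomplishes, and then to extend to arbitrary $\ell < \ell'$ by transitivity through representatives in the intermediate parts.

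For the consecutive case, I would first argue that cases (i), (ii), (iii) exhaust the possibilities under the $\tilde{v}$-reduced assumption. Since $\tilde{v}$ has been arranged to be ordered in agreement with $\tilde{a}$, we always have $v_i \leq v_j$; if the inequality is strict, case (i) applies. Otherwise $v_i = v_j$, and the hypothesis $\delta_{X_\ell(\tilde{a})\cup X_{\ell+1}(\tilde{a})}(\tilde{v}) = 1$ guarantees an index $r \in X_\ell(\tilde{a}) \cup X_{\ell+1}(\tilde{a})$ with $v_r \neq v_i = v_j$, placing us in case (ii) or (iii) depending on whether $r$ lies in $X_\ell(\tilde{a})$ or in $X_{\ell+1}(\tilde{a})$. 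In every case, the dot-product inequality $\tilde{c}\cdot\tilde{v} \geq \tilde{c}\cdot\tau(\tilde{v})$ for the appropriate transposition $\tau$, combined with Claim~\ref{clm:c=const} when the representative lies outside $\{i,j\}$, delivers $c_i \leq c_j$.

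For arbitrary $\ell < \ell'$ I would pick any representatives $i_{\ell+1} \in X_{\ell+1}(\tilde{a}), \ldots, i_{\ell'-1} \in X_{\ell'-1}(\tilde{a})$, set $i_\ell = i$ and $i_{\ell'} = j$, and apply the consecutive case to each adjacent pair to obtain the chain $c_i \leq c_{i_{\ell+1}} \leq \cdots \leq c_{i_{\ell'-1}} \leq c_j$. The main subtlety in the whole argument is the exhaustiveness of the three cases, which rests crucially on the $\tilde{v}$-reduced hypothesis; if $\tilde{v}$ were constant on $X_\ell(\tilde{a}) \cup X_{\ell+1}(\tilde{a})$, no transposition inside that block would move $\tilde{v}$, no constraint on $\tilde{c}$ would be forced, and the conclusion could fail. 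Everything else amounts to mechanical application of the vertex-swap estimate and transitivity.
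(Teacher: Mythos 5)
Your proposal is correct and takes essentially the same route as the paper: the consecutive-part case is handled by exactly the same three-case transposition argument (with Claim~\ref{clm:c=const} invoked when the representative lies outside $\{i,j\}$), and your observation that the $\tilde{v}$-reduced hypothesis together with the common ordering of $\tilde{a}$ and $\tilde{v}$ makes cases (i)--(iii) exhaustive is precisely what the paper relies on. The extension to arbitrary $\ell < \ell'$ by chaining representatives through the intermediate parts is the transitivity step the paper leaves implicit, so you have simply made it explicit.
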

By Observation~\ref{obs:perm-face} and the previous two 
Claims~\ref{clm:c=const} and~\ref{clm:c-ordered}, noting that
the ordering of both $\tilde{a}$ and $\tilde{v}$ was assumed
for the sake of argument, we have the following summarizing theorem.
\begin{theorem}
\label{thm:face-subset}
Let $\tilde{a},\tilde{c}\in {\reals}^n$ where $\tilde{a}$ is 
$\tilde{v}$-reduced, and $F([\tilde{a}]), F([\tilde{c}])$ be the corresponding
induced faces of $P_{n-1}(\tilde{v})$. Assume 
$\tilde{v} \in F([\tilde{a}])$ and let
$[n] = X_1(\tilde{a})\cup\cdots\cup X_h(\tilde{a})$ be the 
partition of $[n]$ induced by $\tilde{a}$ as in (\ref{eqn:part}).
With this setup we have $F([\tilde{a}])\subseteq F([\tilde{c}])$
if and only if we have the following.
\begin{enumerate}
  \item $a_i < a_j\Rightarrow c_i\leq c_j$.
  \item For every part $X_{\ell}(\tilde{a})$ with
$\delta_{X_{\ell}(\tilde{a})}(\tilde{v}) = 1$, we have 
$\delta_{X_{\ell}(\tilde{a})}(\tilde{c}) = 0$.
\end{enumerate}
\end{theorem}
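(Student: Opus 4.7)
The forward direction ($\Rightarrow$) is already accomplished by Claims~\ref{clm:c=const} and~\ref{clm:c-ordered} in the buildup to the theorem; I would simply assemble them. Claim~\ref{clm:c=const} asserts $c_i = c_j$ on any part $X_{\ell}(\tilde{a})$ with $\delta_{X_{\ell}(\tilde{a})}(\tilde{v}) = 1$, which is precisely condition 2, and Claim~\ref{clm:c-ordered} delivers condition 1. Observation~\ref{obs:perm-face} removes the sorting assumption made in the buildup, since a simultaneous relabeling of coordinates by a fixed $\pi \in S_n$ transforms $F([\tilde{a}]) \subseteq F([\tilde{c}])$ faithfully and leaves the polytope $P_{n-1}(\tilde{v}) = P_{n-1}(\pi(\tilde{v}))$ invariant.

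For the converse, suppose conditions 1 and 2 hold. I would first verify $\tilde{v} \in F([\tilde{c}])$ via Observation~\ref{obs:c-face}, i.e., $c_i < c_j \Rightarrow v_i \leq v_j$, by trichotomizing on $a_i$ versus $a_j$. If $a_i < a_j$, then $\tilde{v} \in F([\tilde{a}])$ already gives $v_i \leq v_j$. If $a_i > a_j$, then condition 1 yields $c_j \leq c_i$, contradicting $c_i < c_j$. If $a_i = a_j$, so that $i,j$ share a part $X_{\ell}(\tilde{a})$, then $c_i < c_j$ forces $\delta_{X_{\ell}(\tilde{a})}(\tilde{c}) = 1$; by the contrapositive of condition 2, $\delta_{X_{\ell}(\tilde{a})}(\tilde{v}) = 0$, whence $v_i = v_j$.

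Next, I would promote this to every vertex of $F([\tilde{a}])$. By Proposition~\ref{prp:face-describe}, such a vertex has the form $\mu_1 \cdots \mu_h(\tilde{v})$ with $\mu_{\ell} \in S(X_{\ell}(\tilde{a}))$. For each $\ell$, either $\delta_{X_{\ell}(\tilde{a})}(\tilde{v}) = 0$, in which case $\tilde{v}|_{X_{\ell}}$ is constant and unaffected by $\mu_{\ell}$, or $\delta_{X_{\ell}(\tilde{a})}(\tilde{v}) = 1$, in which case condition 2 renders $\tilde{c}|_{X_{\ell}}$ constant and so the partial inner product $\sum_{i\in X_{\ell}} c_i v_{\mu_{\ell}^{-1}(i)}$ is insensitive to $\mu_{\ell}$. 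Summing over $\ell$ yields $L_{\tilde{c}}(\mu_1 \cdots \mu_h(\tilde{v})) = L_{\tilde{c}}(\tilde{v}) = \max L_{\tilde{c}}|_{P_{n-1}(\tilde{v})}$, the last equality by the already-established $\tilde{v} \in F([\tilde{c}])$. Hence the vertex lies in $F([\tilde{c}])$, which suffices.

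The main obstacle I anticipate is the case $a_i = a_j$ in verifying $\tilde{v} \in F([\tilde{c}])$: condition 1 is a strict-to-weak implication that gives no control over $\tilde{c}$ within a single part of $\tilde{a}$, so it cannot, on its own, supply the hypothesis of Proposition~\ref{prp:face-chain}. Condition 2 is exactly the extra structural information needed to close this gap, and the trichotomy above makes the interplay between the two conditions explicit.
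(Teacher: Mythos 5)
Your proposal is correct. The forward direction is exactly the paper's argument: the theorem is stated there as a summary of Claims~\ref{clm:c=const} and~\ref{clm:c-ordered} (with Observation~\ref{obs:perm-face} discharging the sorting assumption), and you assemble it the same way. The difference is in the converse: the paper never writes out a sufficiency argument --- it presents the theorem as the culmination of the necessity computations --- and your explicit verification fills that in correctly. Establishing $\tilde{v}\in F([\tilde{c}])$ by the trichotomy on $a_i$ versus $a_j$ (with condition~2 handling the case $a_i=a_j$, where condition~1 is silent and Proposition~\ref{prp:face-chain} alone does not apply), and then checking via Proposition~\ref{prp:face-describe} that each generator $\mu_1\cdots\mu_h(\tilde{v})$ of $F([\tilde{a}])$ attains $\max L_{\tilde{c}}$ because on each part either $\tilde{v}$ or $\tilde{c}$ is constant, is the natural argument and is sound; it also shows, correctly, that the $\tilde{v}$-reducedness hypothesis is only needed for necessity, not sufficiency.
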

Finally in this section, we further seek a description of 
the faces and flags of $\Pi_{n-1}(k-1)$ as 
described in Observation~\ref{obs:coarse} for the standard permutahedron
$\Pi_{n-1}$. To do so, we apply Theorem~\ref{thm:face-subset}
to describe when exactly one face $F([\tilde{a}])$ of $\Pi_{n-1}(k-1)$
is contained in another $F([\tilde{c}])$ 
in terms of the characterization given in Theorem~\ref{thm:face-gen-perm}.
We can assume both $\tilde{a}$ and $\tilde{c}$ to be 
$\tilde{v}_{n-1}(k-1)$-reduced.

Note that, trivially, if $|X_{\ell}(\tilde{a})|=1$, then
clearly $\delta_{X_{\ell}(\tilde{a})}(\tilde{c}) = 0$.
Assume that $F([\tilde{a}])$ and $F([\tilde{c}])$ correspond to the
OPPs $(Z,X_0,\ldots,X_{p})$ and $(Z',X'_0,\ldots,X'_{p'})$ 
of $[n]$ respectively, and that 
$\tilde{v}\in F([\tilde{a}])\subseteq F([\tilde{c}])$
is a vertex of $\Pi_{n-1}(k-1)$. By Theorems~\ref{thm:face-gen-perm} 
and~\ref{thm:face-subset} and the above
note, we have that $\delta_{X_i}(\tilde{c}) = 0$ 
for all  $i\in\{0,\ldots,p\}$, and hence $X_0\subseteq X_0'$, 
and further each part from 
$\{X_1,\ldots,X_{p}\}$ is contained in a part from $\{X'_0,\ldots,X'_{p'}\}$.
So, as a direct consequence of Theorem~\ref{thm:face-subset}
We have the following.
\begin{corollary}
\label{cor:face-subset-OPP}
For two faces $F([\tilde{a}])$ and $F([\tilde{c}])$ of $\Pi_{n-1}(k-1)$,
where both $\tilde{a}$ and $\tilde{c}$ are $\tilde{v}_{n-1}(k-1)$-reduced,
corresponding to the OPPs $(Z,X_0,\ldots,X_{p})$ and 
$(Z',X'_0,\ldots,X'_{p'})$ respectively, we have 
$F([\tilde{a}])\subseteq F([\tilde{a}])$ if, and only if,
the disjoint union $D\cup X_0\cup\cdots\cup X_{p}$ a refinement
of $X'_0\cup\cdots\cup X'_{p'}$ where $X_0\subseteq X_0'$ and
$D = X_0'\setminus X_0$ is the difference.
\end{corollary}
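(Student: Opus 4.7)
The plan is to derive Corollary~\ref{cor:face-subset-OPP} as a direct translation of Theorem~\ref{thm:face-subset} into the language of ordered pseudo-partitions. The starting observation is that for $\tilde{v} = \tilde{v}_{n-1}(k-1)$, which has exactly $k-1$ zero entries followed by distinct positive entries, a subset $Y \subseteq [n]$ satisfies $\delta_Y(\tilde{v}) = 0$ precisely when either $Y \subseteq \{1,\ldots,k-1\}$ or $Y = \{\ell\}$ for a single $\ell \in \{k,\ldots,n\}$. Moreover, $\tilde{v}$-reducedness of $\tilde{a}$ (respectively $\tilde{c}$) forces at most one block of (\ref{eqn:part}) to lie entirely inside $\{1,\ldots,k-1\}$, and that block is exactly $Z$ (respectively $Z'$); the remaining blocks are $X_0,X_1,\ldots,X_p$ (respectively $X_0',X_1',\ldots,X_{p'}'$).

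For the forward direction, assume $F([\tilde{a}]) \subseteq F([\tilde{c}])$. Theorem~\ref{thm:face-subset}(2), together with the triviality that $\tilde{c}$ is automatically constant on any singleton block, implies that $\tilde{c}$ is constant on each $X_i$ for $i \in \{0,\ldots,p\}$, so every such $X_i$ lies inside a single block of $\tilde{c}$'s partition from (\ref{eqn:part}). Since each $X_i$ meets $\{k,\ldots,n\}$ (trivially for $i \geq 1$, and for $i=0$ because $|Z|+|X_0| \geq k$), the containing block cannot be $Z'$; hence $X_i \subseteq X_{j(i)}'$ for some $j(i) \in \{0,\ldots,p'\}$. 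The inclusion $Z' \subseteq Z$ is immediate because any index forced to zero throughout $F([\tilde{c}])$ is forced to zero throughout the smaller face $F([\tilde{a}])$. Combining the order-compatibility condition Theorem~\ref{thm:face-subset}(1) with the $\tilde{v}$-reduced structure of $\tilde{c}$'s OPP then forces $X_0 \subseteq X_0'$, and each index in $Z \setminus Z'$ (not zero-forced for $\tilde{c}$ but living in $\{1,\ldots,k-1\}$) must likewise sit in $X_0'$, since $X_0'$ is the unique block of $\tilde{c}$'s OPP that straddles the forced-zero region and the positive region. This yields $X_0' = X_0 \cup (Z \setminus Z')$, so with $D := X_0' \setminus X_0 = Z \setminus Z'$ the partition $\{D,X_0,X_1,\ldots,X_p\}$ refines $\{X_0',X_1',\ldots,X_{p'}'\}$.

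For the reverse direction, I would verify the two conditions of Theorem~\ref{thm:face-subset} directly from the refinement data: condition~(2) is immediate because each part of $\tilde{a}$'s (\ref{eqn:part}) sits in a single $X_j'$ on which $\tilde{c}$ is constant, and condition~(1), namely $a_i < a_j \Rightarrow c_i \leq c_j$, is obtained by tracking the refinement blocks containing $i$ and $j$ together with the fact that the $X_j'$ occur in $\tilde{c}$-increasing order. The main technical obstacle will be establishing $X_0 \subseteq X_0'$ (as opposed to $X_0 \subseteq X_j'$ for some $j \geq 1$) in the forward direction: this requires carefully identifying $X_0'$ as the unique block of $\tilde{c}$'s OPP whose $\tilde{c}$-value matches that of the smallest non-forced-zero block of $\tilde{a}$, simultaneously using $\tilde{v}$-reducedness of $\tilde{c}$ and the threshold condition $|Z'| + |X_0'| \geq k$ from Theorem~\ref{thm:face-gen-perm}.
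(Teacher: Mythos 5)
Your overall strategy coincides with the paper's: the corollary is obtained there as a direct translation of Theorem~\ref{thm:face-subset} (together with the bijection of Theorem~\ref{thm:face-gen-perm} and the triviality that $\tilde{c}$ is constant on singleton blocks), giving that $\tilde{c}$ is constant on every $X_i$, hence $X_0\subseteq X_0'$ and each $X_i$ lies inside some $X'_j$. The genuine problem is the identity you use to close the forward direction: the claim that $X_0' = X_0\cup(Z\setminus Z')$, equivalently $D=X_0'\setminus X_0=Z\setminus Z'$. This is false in general, because blocks $X_i$ with $i\geq 1$ of the smaller face's OPP can also be absorbed into $X_0'$; your argument only shows $Z\setminus Z'\subseteq X_0'$, not that nothing else lands there. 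Concretely, in $\Pi_3(2)$ ($n=4$, $k=3$, $\tilde{v}_3(2)=(0,0,1,3)$) take $F([\tilde{a}])$ to be the vertex $(0,0,1,3)$ with $\tilde{a}=(1,1,2,3)$, whose OPP is $(Z,X_0,X_1)=(\{1,2\},\{3\},\{4\})$, and $F([\tilde{c}])$ the edge $\conv\{(0,0,1,3),(0,0,3,1)\}$ with $\tilde{c}=(1,1,2,2)$, whose OPP is $(Z',X_0')=(\{1,2\},\{3,4\})$. Both vectors are $\tilde{v}_3(2)$-reduced and $F([\tilde{a}])\subseteq F([\tilde{c}])$, yet $Z\setminus Z'=\emptyset$ while $D=X_0'\setminus X_0=\{4\}=X_1$: here $X_0'$ swallows the higher block $X_1$, not a piece of $Z$.

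In fairness, this oversight mirrors an imprecision in the source: the words ``disjoint union'' in the corollary and the remark $D\subseteq Z$ after it fail in the same example, and the paper's own two-sentence derivation never tracks which primed block absorbs what. The version your remaining steps actually support is: $Z'\subseteq Z$, $Z\setminus Z'\subseteq X_0'$, and the ordered collection $(Z\setminus Z'),X_0,X_1,\ldots,X_p$ partitions $[n]\setminus Z'$ and refines $X_0',\ldots,X'_{p'}$ in the order-compatible sense, where several consecutive parts (possibly including some $X_i$ with $i\geq 1$) may merge into $X_0'$. Note also that the increasing-block-map property you invoke when verifying condition (1) of Theorem~\ref{thm:face-subset} in the converse (cf.\ Observation~\ref{obs:inc-surj}) must be taken as part of the refinement hypothesis: an unordered reading of ``refinement'' is not sufficient for containment. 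So the approach is the right one, but the step $D=Z\setminus Z'$ has to be deleted and the absorption into $X_0'$ handled as above before the forward direction is complete.
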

{\sc Remark:} Note that
$D\subseteq [n]\setminus(X_0\cup\cdots\cup X_p)\subseteq Z$.

Note that we have a well defined map
$\{0,1,\ldots,p\}\ni i\mapsto i'\in \{0,1,\ldots,p'\}$ where $i'$ 
is the unique index with $X_i\subseteq X'_{i'}$. In this way we have.
\begin{observation}
\label{obs:inc-surj}
The above map $i\mapsto i'$ is an increasing surjection.
\end{observation}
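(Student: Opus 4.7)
The plan is to read both properties off of Corollary~\ref{cor:face-subset-OPP} for surjectivity and off of Theorem~\ref{thm:face-subset}(1) for monotonicity, using the natural representative vectors for the two OPPs built in the proof of Theorem~\ref{thm:face-gen-perm}.

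For surjectivity, Corollary~\ref{cor:face-subset-OPP} gives $X_0 \subseteq X'_0$ together with $D \cup X_0 = X'_0$, and the set equality $D \cup X_0 \cup \cdots \cup X_p = X'_0 \cup \cdots \cup X'_{p'}$. Subtracting $X'_0$ from both sides I get $X_1 \cup \cdots \cup X_p = X'_1 \cup \cdots \cup X'_{p'}$, with the right-hand parts refined by the left-hand parts. Hence $0 \mapsto 0$ covers $i' = 0$; for any $i' \geq 1$, the nonempty set $X'_{i'}$ is disjoint from $X'_0 \supseteq X_0$, so it must be a disjoint union of one or more of the $X_i$ with $i \geq 1$, and any such $X_i$ witnesses $i \mapsto i'$.

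For monotonicity, I would take the representative vectors $\tilde{a}, \tilde{c} \in \reals^n$ from the proof of Theorem~\ref{thm:face-gen-perm}, namely $a_l = 0$ for $l \in Z$, $a_l = i + 1$ for $l \in X_i$, and likewise $c_l = 0$ on $Z'$, $c_l = j + 1$ on $X'_j$. These are $\tilde{v}_{n-1}(k-1)$-reduced: condition~(ii) of Theorem~\ref{thm:face-gen-perm}, $|Z| + |X_0| \geq k$, combined with the fact that $\tilde{v}_{n-1}(k-1)$ has exactly $k - 1$ zero entries, forces each consecutive pair among $Z, X_0, \ldots, X_p$ to contain two distinct values of $\tilde{v}_{n-1}(k-1)$ after the reordering permitted by Observation~\ref{obs:perm-face}. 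Now for $i_1 < i_2$ in $\{0, \ldots, p\}$, any $l_1 \in X_{i_1}$ and $l_2 \in X_{i_2}$ satisfy $a_{l_1} < a_{l_2}$, so Theorem~\ref{thm:face-subset}(1) yields $c_{l_1} \leq c_{l_2}$, which is precisely $i'_1 \leq i'_2$.

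The only real (and minor) obstacle is verifying the $\tilde{v}_{n-1}(k-1)$-reducedness of these natural representatives, after which both halves of the observation are immediate from the machinery already in place in Corollary~\ref{cor:face-subset-OPP} and Theorem~\ref{thm:face-subset}.
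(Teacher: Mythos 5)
Your proof is correct and takes essentially the route the paper intends: the observation is stated without proof as an immediate consequence of the preceding machinery, with surjectivity read off the refinement statement of Corollary~\ref{cor:face-subset-OPP} and monotonicity off condition (1) of Theorem~\ref{thm:face-subset}, exactly as you argue. Your additional check that the canonical representatives $\tilde{c}({\cal{P}})$ are $\tilde{v}_{n-1}(k-1)$-reduced (using $|Z|\leq k-1$, $|Z|+|X_0|\geq k$, and the distinctness of the positive entries of $\tilde{v}_{n-1}(k-1)$) is a detail the paper glosses over, and it is sound.
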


\section{A closed formula for the exponential generating function}
\label{sec:exp-func}

In this final section we derive a closed formula for the 
exponential ${\ell}$-generating function 
${\tilde{\xi}}_{{\cal{P}}_k^{\perp};k-1}^{\ell}(\tilde{x},y)$ from 
Definition~\ref{def:flag-poly-func} of the family 
${\cal{P}}_k^{\perp} = \{\Pi_{n-1}(k-1)\}_{n\geq k}$ 
of all the general permutahedra,
which we will henceforth denote by ${\tilde{\xi}}_k^{\ell}(\tilde{x},y)$,
analogous to the result of Proposition~\ref{prp:exp-gen-func}.
Unless otherwise stated $k\geq 1$ is an arbitrary but fixed
integer throughout.

If we let  
\[
{\tilde{g}}_n^{\ell}(\tilde{x}) 
:= \sum_{\tilde{s}}\frac{f_{\tilde{s}}(\Pi_{n-1}(k-1))}{(n-s_1)!}
x_1^{n-s_1}x_2^{s_2-s_1}\cdots x_{\ell}^{s_{\ell}-s_{\ell-1}},
\]
then by Definition~\ref{def:flag-poly-func} we have
\begin{equation}
\label{eqn:gn}
x_1^n{\tilde{g}}_n^{\ell}(x_1^{-1},x_2,\ldots,x_{\ell}) 
=  {\tilde{\xi}}_{\Pi_{n-1}(k-1)}^{\ell}(\tilde{x})
\end{equation}
and so for
\[
{\tilde{g}}^{\ell}(\tilde{x},y) 
:= \sum_{n\geq 1}{\tilde{g}}_n^{\ell}(\tilde{x})\frac{y^n}{n!}
= \sum_{n\geq k}{\tilde{g}}_n^{\ell}(\tilde{x})\frac{y^n}{n!}
\]
we have
\begin{equation}
\label{eqn:g-2-g-tilde}
{\tilde{g}}^{\ell}(x_1^{-1},x_2,\ldots, x_{\ell},x_1y) = 
\sum_{n\geq k}x_1^n{\tilde{g}}_n^{\ell}(x_1^{-1},x_2,\ldots,x_{\ell})\frac{y^n}{n!}
= {\tilde{\xi}}_k^{\ell}(\tilde{x},y).
\end{equation}
Hence, it suffices to obtain a closed formula for 
${\tilde{g}}^{\ell}(\tilde{x},y)$. Further we note that 
for $\ell = 1$ we obtain by (\ref{eqn:gn}) that
$x_1^n{\tilde{g}}_n^1(x_1^{-1}) = {\tilde{\xi}}^1(x_1)$ and hence
${\tilde{g}}_n^1(x_1) = x_1^n{\tilde{\xi}}^1(x_1^{-1})$ and so
by Proposition~\ref{prp:face-poly} that
\begin{equation}
\label{eqn:g}
g_n(x) := {\tilde{g}}_n^{1}(x) = 
\sum_{\substack{0\leq i\leq k-1 \\ k\leq i+j\leq n}}
\binom{n}{i}\binom{n-i}{j}\sum_{p=0}^{n-i-j}\stirling{n-i-j}{p}{p}!
\frac{x^{i+p+1}}{(i+p+1)!}.
\end{equation}
Further, by (\ref{eqn:gn}) for general $\ell$ and $\ell=1$, 
and Corollary~\ref{cor:simple-flag} applied to ${\tilde{\xi}}^{\ell}$ 
and (\ref{eqn:g}) we obtain
\begin{eqnarray*}
\lefteqn{{\tilde{g}}_n^{\ell}(\tilde{x}) = 
\frac{1}{x_2 + \cdots + x_{\ell} +1}g_n(x_1(x_2 + \cdots + x_{\ell}+1))} \\
   & & = \frac{1}{x_2 + \cdots + x_{\ell} +1}\left[
\sum_{\substack{0\leq i\leq k-1 \\ k\leq i+j\leq n}}
\binom{n}{i}\binom{n-i}{j}\sum_{p=0}^{n-i-j}\stirling{n-i-j}{p}{p}!
\frac{(x_1(x_2 + \cdots + x_{\ell}+1))^{i+p+1}}{(i+p+1)!}
\right].
\end{eqnarray*}
Hence, it suffices to obtain a closed formula for 
\begin{equation}
\label{eqn:gxy}
g(x,y) := {\tilde{g}}^1(x,y) 
= \sum_{n\geq 1}g_n(x)\frac{y^n}{n!} = \sum_{n\geq k}g_n(x)\frac{y^n}{n!},
\end{equation}
since then 
\begin{equation}
\label{eqn:g-tilde-y}
{\tilde{g}}_n^{\ell}(\tilde{x},y) = 
\sum_{n\geq k}{\tilde{g}}_n^{\ell}(\tilde{x})\frac{y^n}{n!} 
= \frac{g(x_1(x_2 + \cdots + x_{\ell}+1),y)}{x_2 + \cdots + x_{\ell} +1}.
\end{equation}

By (\ref{eqn:g}) we have 
\begin{equation}
\label{eqn:g=sumgi}
g_n(x) = \sum_{i=0}^{k-1}g_{n;i}(x)
\end{equation}
where for each $i\in\{0,1,\ldots,k-1\}$  
\begin{eqnarray}
g_{n;i}(x) & = & \sum_{j=k-i}^{n-i}
\binom{n}{i}\binom{n-i}{j}\sum_{p=0}^{n-i-j}\stirling{n-i-j}{p}{p}!
\frac{x^{i+p+1}}{(i+p+1)!} \nonumber \\
   & = &  
\binom{n}{i}
\sum_{j=k-i}^{n-i}
\binom{n-i}{j}\sum_{p=0}^{n-i-j}\stirling{n-i-j}{p}{p}!
\frac{x^{i+p+1}}{(i+p+1)!}, \label{eqn:gix}
\end{eqnarray}
and so its $(i+1)$-th derivative w.r.t. $x$ is
\begin{equation}
\label{eqn:i+1-th-diff}
g_{n;i}^{(i+1)}(x) = 
\binom{n}{i}
\sum_{j=k-i}^{n-i}
\binom{n-i}{j}\sum_{p=0}^{n-i-j}\stirling{n-i-j}{p}x^p 
:= \binom{n}{i}\gamma_{n;i}(x).
\end{equation}
From this we deduce that for 
\begin{equation}
\label{eqn:gixy}
g_i(x,y) := \sum_{n\geq 1}g_{n;i}(x)\frac{y^n}{n!}
\end{equation}
where $g_{n;i}(x)$ is given in (\ref{eqn:gix}), we have
\begin{eqnarray*}
g_i^{(i+1)}(x,y) & := & \frac{\partial^{i+1}}{{\partial x}^{i+1}}g_i(x,y) \\
  & = & \sum_{n\geq 1}g_{n;i}^{(i+1)}(x)\frac{y^n}{n!} \\
  & = & \sum_{n\geq 1}\binom{n}{i}\gamma_{n;i}(x)\frac{y^n}{n!} \\
  & = & \sum_{n\geq i}\binom{n}{i}\gamma_{n;i}(x)\frac{y^n}{n!} \\
  & = & \frac{y^i}{i!}\sum_{n\geq i}\gamma_{n;i}(x)\frac{y^{n-i}}{(n-i)!}.
\end{eqnarray*}

Before we continue, we need the following.
\begin{lemma}
\label{lmm:root-ord-parts}
For $m, N\in\nats$ we have 
\[
\sum_{i=1}^{N-m+1}\binom{N}{i}\stirling{N-i}{m-1} = m\stirling{N}{m}.
\]
\end{lemma}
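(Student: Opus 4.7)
The plan is to give a short bijective proof, since both sides have clean combinatorial meanings in terms of set partitions. The right hand side $m\stirling{N}{m}$ counts ordered pairs $(\sigma,B)$, where $\sigma$ is an unordered partition of $[N]$ into $m$ nonempty blocks and $B\in\sigma$ is a distinguished block. My goal is to exhibit the left hand side as a count of the very same objects, grouped by $i:=|B|$.

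To this end, I would interpret the $i$-th summand on the left hand side as follows: $\binom{N}{i}$ chooses an $i$-element subset $S\subseteq [N]$, and $\stirling{N-i}{m-1}$ partitions the complement $[N]\setminus S$ into $m-1$ unordered nonempty blocks $\pi$. The map
\[
(S,\pi)\longmapsto (\sigma,B):=\bigl(\pi\cup\{S\},\ S\bigr)
\]
sends such a pair to a partition of $[N]$ into $m$ nonempty blocks together with the distinguished block $S$; conversely, given $(\sigma,B)$, setting $S:=B$ and $\pi:=\sigma\setminus\{B\}$ recovers a unique pair of the previous type, and these two maps are mutually inverse. Summing the count $\binom{N}{i}\stirling{N-i}{m-1}$ over $i\in\{1,\ldots,N-m+1\}$ enumerates every $(\sigma,B)$ exactly once, and the claim follows. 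The only minor bookkeeping is that the upper summation limit $N-m+1$ is exactly the largest $i$ for which $N-i\geq m-1$, beyond which $\stirling{N-i}{m-1}=0$ by convention; so one may equivalently sum $i$ from $1$ to $N$ without changing the value.

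There is no genuine obstacle here; the proof is purely combinatorial and self-contained, and if a purely algebraic argument were preferred one could alternatively extract $[y^N/N!]$ in the well-known identity $\sum_{N\geq m}\stirling{N}{m}\tfrac{y^N}{N!}=(e^y-1)^m/m!$ together with $(e^y-1)\cdot(e^y-1)^{m-1}=m\cdot (e^y-1)^m/m$, but the bijective argument is shorter and makes the factor $m$ transparent.
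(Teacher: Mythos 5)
Your argument is correct and is essentially identical to the paper's own proof: both count partitions of $[N]$ into $m$ blocks with one distinguished (rooted) block, once directly as $m\stirling{N}{m}$ and once by first choosing the distinguished block of size $i$ and then partitioning its complement into $m-1$ blocks. The bijective phrasing and the remark about the summation limit add nothing beyond the paper's double-counting argument.
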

\begin{proof}
Call an unordered partition of $[N]$ {\em rooted} if it has one
distinguished part, the {\em root} $r$. For each of the unordered
partitions of $[N]$ into $m$ parts, we have $m$ possible roots,
and so the number of rooted partitions of $[N]$ into $m$ parts is, 
on one hand, given by $m\stirling{N}{m}$.

On the other hand, we can start by choosing a root $r$ of valid cardinality
$i\in\{1,\ldots,N-m+1\}$, and then consider the $\stirling{N-i}{m-1}$ 
unordered partitions of the remaining subset of $[N]\subseteq r$. For each
$i$ this can be done in $\binom{N}{i}\stirling{N-i}{m-1}$ possible ways.
Adding these ways for all possible $i$ will give us the expression on the 
left in the stated equation.
\end{proof}
The coefficient $[x^h](\gamma_{n;i}(x))$ of $x^h$ in the polynomial 
$\gamma_{n;i}(x)$ 
defined in (\ref{eqn:i+1-th-diff}) is by direct tallying and the above
Lemma~\ref{lmm:root-ord-parts} given by
\begin{eqnarray*}
[x^h](\gamma_{n;i}(x)) & = & 
\sum_{q=k-i}^{n-i-h}\binom{n-i}{q}\stirling{n-i-q}{h} \\ 
 & = & \sum_{q=1}^{n-i-h}\binom{n-i}{q}\stirling{n-i-q}{h} -
  \sum_{q=1}^{k-i-1}\binom{n-i}{q}\stirling{n-i-q}{h} \\
 & = & \stirling{n-i}{h+1}(h+1) -
\sum_{q=1}^{k-i-1}\binom{n-i}{q}\stirling{n-i-q}{h}.
\end{eqnarray*}
In light of this, we can further write $\gamma_{n;i}(x)$ as
$\gamma_{n;i}(x) = \alpha_{n;i}(x) - \sum_{q=1}^{k-i-1}\alpha_{n;i,q}(x)$,
where 
\[
\alpha_{n;i}(x) := \sum_{h\geq 0}\stirling{n-i}{h+1}(h+1)x^h, \ \
\alpha_{n;i,q}(x) := \sum_{h\geq 0}\binom{n-i}{q}\stirling{n-i-q}{h}x^h.
\]
Defining the corresponding exponential series
\[
\alpha_i(x,y) := \sum_{n\geq i}\alpha_{n;i}(x)\frac{y^{n-i}}{(n-i)!}, \ \ 
\alpha_{i,q}(x,y) := \sum_{n\geq i}\alpha_{n;i,q}(x)\frac{y^{n-i}}{(n-i)!},
\]
we then get 
\begin{eqnarray*}
g_i^{(i+1)}(x,y) & = & 
\frac{y^i}{i!}\sum_{n\geq i}\gamma_{n;i}(x)\frac{y^{n-i}}{(n-i)!} \\
   & = & \frac{y^i}{i!}\sum_{n\geq i}\left(
\alpha_{n;i}(x) 
 - \sum_{q=1}^{k-i-1}\alpha_{n;i,q}(x)\right)\frac{y^{n-i}}{(n-i)!} \\
   & = & 
\frac{y^i}{i!}\left(\alpha_i(x,y) - \sum_{q=1}^{k-i-1}\alpha_{i;q}(x,y)\right).
\end{eqnarray*}
Up to a constant we obtain 
$\int \alpha_{n;i}(x)\,dx = \sum_{h\geq 0}\stirling{n-i}{h+1}x^{h+1}$,
and so by (\ref{eqn:Touch-St2}) we get
\[
\int\alpha_i(x,y)\,dx 
= \sum_{h,n\geq i}\stirling{n-i}{h+1}x^{h+1}\frac{y^{n-i}}{(n-i)!}
= \sum_{h,m\geq 0}\stirling{m}{h+1}x^{h+1}\frac{y^{m}}{m!}
= e^{x(e^y-1)}-1,
\]
and so by differentiating 
\begin{equation}
\label{eqn:alphai}
\alpha_i(x,y) = e^{x(e^y-1)}(e^y-1).
\end{equation}
Similarly, but with neither integration nor differentiation, we obtain 
by direct manipulation and by again (\ref{eqn:Touch-St2})
\begin{eqnarray*}
\alpha_{i;q}(x,y) & = & \sum_{n\geq i}\alpha_{n;i,q}(x)\frac{y^{n-i}}{(n-i)!} \\
   & = & \sum_{n\geq i}\left(\sum_{h\geq 0}\binom{n-i}{q}\stirling{n-i-q}{h}x^h
\right)\frac{y^{n-i}}{(n-i)!} \\
   & = & \sum_{m\geq 0}\left(\sum_{h\geq 0}\binom{m}{q}\stirling{m-q}{h}x^h
\right)\frac{y^m}{m!} \\
   & = & \sum_{m,h\geq 0}\binom{m}{q}\stirling{m-q}{h}\frac{y^m}{m!}x^h \\
   & = & 
\frac{y^q}{q!}\sum_{m,h\geq 0}\stirling{m-q}{h}\frac{y^{m-q}}{(m-q)!}x^h \\
   & = & \frac{y^q}{q!}e^{x(e^y-1)}.
\end{eqnarray*}
Consequentially, by the above and (\ref{eqn:alphai}) we then get
\begin{eqnarray*}
g_i^{(i+1)}(x,y)  
   & = & 
\frac{y^i}{i!}\left(\alpha_i(x,y) - \sum_{q=1}^{k-i-1}\alpha_{i;q}(x,y)\right) \\
   & = & \frac{y^i}{i!}\left(e^y - E_{k-i-1}(y)\right)e^{x(e^y-1)},
\end{eqnarray*}
where $E_m(x) = 1 + x + \cdots + \frac{x^m}{m!}$ is the $m$-th 
degree polynomial approximation of $e^y$.
By the the above and the defining sum of $g_i(x,y)$ in (\ref{eqn:gixy})
we have 
\begin{equation}
\label{eqn:gixy-cond}
g_i^{(i+1)}(x,y)  = \frac{y^i}{i!}\left(e^y - E_{k-i-1}(y)\right)e^{x(e^y-1)}, \ \ 
g_i(0,y) = g_i'(0,y) = \cdots = g_i^{(i)}(0,y) = 0 \mbox{ for all }y.
\end{equation}
The closed expression for $g_i(x,y)$ is uniquely determined by 
(\ref{eqn:gixy-cond}) and by integrating $i+1$ times we get
\begin{equation}
\label{eqn:gixy-form}
g_i(x,y) = 
\frac{y^i(e^y - E_{k-i-1}(y))}{i!(e^y-1)^{i+1}}\left(
e^{x(e^y-1)} - E_i(x(e^y-1))\right).
\end{equation}
By (\ref{eqn:gxy}), (\ref{eqn:g=sumgi}), (\ref{eqn:gixy}) 
and (\ref{eqn:gixy-form}) we then get
\[
g(x,y) 
= \sum_{n\geq 1}g_n(x)\frac{y^n}{n!} 
= \sum_{i=0}^{k-1}g_i(x,y) 
= \sum_{i=0}^{k-1}\frac{y^i(e^y - E_{k-i-1}(y))}{i!(e^y-1)^{i+1}}\left(
e^{x(e^y-1)} - E_i(x(e^y-1))\right), 
\]
and so by the above expression, (\ref{eqn:g-tilde-y}) and 
(\ref{eqn:g-2-g-tilde}) we have the following main theorem of this section.
\begin{theorem}
\label{thm:main-flag-func}
The exponential $\ell$-generating function for all the $\ell$-flags of all the
general permutahedra ${\cal{P}}_k^{\perp} = \{\Pi_{n-1}(k-1)\}_{n\geq k}$ from 
Definition~\ref{def:flag-poly-func} is given by 
\begin{eqnarray*}
{\tilde{\xi}}_k^{\ell}(\tilde{x},y) 
 & = & \sum_{n\geq 1,\tilde{s}}
{\tilde{\xi}}^{\ell}_{\Pi_{n-1}(k-1)}(\tilde{x})\frac{y^n}{n!} \\
 & = & \frac{1}{S}\sum_{i=0}^{k-1}
\frac{(x_1y)^i(e^{x_1y} - E_{k-i-1}({x_1y}))}{i!(e^{x_1y}-1)^{i+1}}
\left(e^{\frac{S}{x_1}\left(e^{x_1y}-1\right)} 
- E_i\left(\frac{S}{x_1}(e^{x_1y}-1)\right)
\right).
\end{eqnarray*}
where $S = x_2 + \cdots + x_{\ell} + 1$ 
and $E_m(x) = 1 + x + \cdots + \frac{x^m}{m!}$ is the $m$-th
polynomial approximation of $e^x$.
\end{theorem}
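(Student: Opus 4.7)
The plan is to follow the computational chain already set up in Section~\ref{sec:exp-func}. First I would invoke Corollary~\ref{cor:simple-flag} applied to $\tilde{\xi}^{\ell}_{\Pi_{n-1}(k-1)}$ (legitimate because $\Pi_{n-1}(k-1)$ is a simple $(n-1)$-polytope by Proposition~\ref{prp:v-e-simple}) to reduce the $\ell$-variate problem to the univariate $\ell=1$ case. This reduction is exactly (\ref{eqn:g-tilde-y}) together with (\ref{eqn:g-2-g-tilde}), so it is enough to obtain a closed form for $g(x,y) = \sum_{n\geq k} g_n(x) y^n/n!$ where $g_n$ is read off from Proposition~\ref{prp:face-poly} as in (\ref{eqn:g}).

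Next I would split $g_n(x)$ according to the value $i = |Z|$ appearing in the OPP parametrization of Theorem~\ref{thm:face-gen-perm}, writing $g_n = \sum_{i=0}^{k-1} g_{n;i}$. The observation driving the rest is that in $g_{n;i}(x)$ the variable $x$ appears only in the factor $x^{i+p+1}/(i+p+1)!$, so taking $i+1$ derivatives in $x$ strips the denominator and produces the simpler polynomial $\binom{n}{i}\gamma_{n;i}(x)$ of (\ref{eqn:i+1-th-diff}). The coefficient $[x^h]\gamma_{n;i}(x)$ is a truncation of the sum $\sum_{q\geq 1}\binom{n-i}{q}\stirling{n-i-q}{h}$, and Lemma~\ref{lmm:root-ord-parts} closes this sum to $(h+1)\stirling{n-i}{h+1}$. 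Consequently $\gamma_{n;i} = \alpha_{n;i} - \sum_{q=1}^{k-i-1}\alpha_{n;i,q}$ where both families have transparent exponential generating functions in $y$.

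I would then evaluate the corresponding series using the Touchard identity (\ref{eqn:Touch-St2}): a single $x$-integration shows $\int \alpha_i(x,y)\,dx = e^{x(e^y-1)}-1$, hence $\alpha_i(x,y) = (e^y-1)e^{x(e^y-1)}$, while direct manipulation gives $\alpha_{i;q}(x,y) = \frac{y^q}{q!} e^{x(e^y-1)}$. Combining these yields the ODE $g_i^{(i+1)}(x,y) = \frac{y^i}{i!}(e^y - E_{k-i-1}(y)) e^{x(e^y-1)}$, together with the initial conditions $g_i(0,y) = g_i'(0,y) = \cdots = g_i^{(i)}(0,y) = 0$, which hold because each $g_{n;i}(x)$ vanishes to order $\geq i+1$ at $x=0$. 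These conditions uniquely pin down $g_i(x,y)$, and $i+1$ antiderivatives in $x$ recover the explicit formula (\ref{eqn:gixy-form}) involving the polynomial approximant $E_i(x(e^y-1))$.

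Finally I would sum over $i\in\{0,\ldots,k-1\}$ to obtain $g(x,y)$, then back-substitute in (\ref{eqn:g-tilde-y}) with $x \mapsto x_1 S$ where $S = x_2+\cdots+x_\ell+1$, and apply (\ref{eqn:g-2-g-tilde}) which replaces $x_1$ by $x_1^{-1}$ and $y$ by $x_1 y$; the stated formula of Theorem~\ref{thm:main-flag-func} falls out. The main obstacle, in my view, is not any single identity but the combinatorial bookkeeping around Lemma~\ref{lmm:root-ord-parts}: one has to recognize that the truncated Stirling sum $\sum_{q=k-i}^{n-i-h}\binom{n-i}{q}\stirling{n-i-q}{h}$ must be rewritten as a full sum minus a short tail, so that the full sum collapses via the rooted-partitions identity while the tail is short enough to admit a closed generating function per $q$. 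Keeping the substitutions $S/x_1$ and $x_1 y$ consistent inside the nested exponentials and truncations $E_m$ is the other point where care is required.
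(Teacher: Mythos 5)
Your proposal is correct and follows essentially the same route as the paper's own derivation: the reduction via Corollary~\ref{cor:simple-flag} to the $\ell=1$ series $g(x,y)$, the split $g_n=\sum_{i=0}^{k-1}g_{n;i}$ by $i=|Z|$, the $(i+1)$-fold differentiation leading to $\gamma_{n;i}$, the use of Lemma~\ref{lmm:root-ord-parts} to close the truncated Stirling sum as a full sum minus a short tail, the Touchard identity to evaluate $\alpha_i$ and $\alpha_{i;q}$, and the $(i+1)$-fold integration under the stated initial conditions before back-substituting via (\ref{eqn:g-tilde-y}) and (\ref{eqn:g-2-g-tilde}). No substantive differences from the paper's argument.
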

{\sc Remarks:}
(i) The question remains, whether one could possible further simplify 
the expression given in Theorem~\ref{thm:main-flag-func}. Since, however,
we are dealing with enumeration of OPP as
described in Theorem~\ref{thm:face-gen-perm}, in which the cardinalities
of the parts depend both on $n$ and $k$, it seems unlikely to the author
that a substantial simplification exists.
(ii) Letting $k=2$ we obtain the 
exponential $\ell$-generating function for all the $\ell$-flags of all the
general permutahedra $\{\Pi_{n-1}\}_{n\geq 2}$ as the following
\[
{\tilde{\xi}}_2^{\ell}(\tilde{x},y) = 
\frac{1}{x_2 + \cdots + x_{\ell} + 1}
\left(e^{\frac{x_2 + \cdots + x_{\ell} + 1}{x_1}\left(e^{x_1y}-1\right)}
- 1 - y(x_2 + \cdots + x_{\ell} + 1)\right),
\]
which is consistent with Proposition~\ref{prp:exp-gen-func}, 
when we note that the family $\{\Pi_{n-1}\}_{n\geq 1}$ there 
contains $\Pi_0$ for $n=1$, whereas in Theorem~\ref{thm:main-flag-func}
the family $\{\Pi_{n-1}(k-1)\}_{n\geq k}$ becomes $\{\Pi_{n-1}\}_{n\geq 2}$
omitting the singleton $\Pi_0$.

When $\ell = 1$ in Theorem~\ref{thm:main-flag-func} the sum
$x_2 + \cdots + x_{\ell}$ is empty  
and so we obtain the face function of all the general permutahedra
in the following.
\begin{corollary}
\label{cor:l=1}
The exponential generating function for all the faces of all the
general permutahedra $\{\Pi_{n-1}(k-1)\}_{n\geq k}$ from 
Definition~\ref{def:flag-poly-func} is given by 
\[
{\xi}_k(x,y) = {\tilde{\xi}}_k^{1}(x,y) = 
\sum_{i=0}^{k-1}\frac{(xy)^i(e^{xy} - E_{k-i-1}({xy}))}{i!(e^{xy}-1)^{i+1}}
\left(
e^{\left(\frac{e^{xy}-1}{x}\right)} - E_i\left(\frac{e^{xy}-1}{x}\right)
\right).
\]
where $E_m(x) = 1 + x + \cdots + \frac{x^m}{m!}$ is the $m$-th
polynomial approximation of $e^x$.
\end{corollary}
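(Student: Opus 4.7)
The plan is to derive Corollary~\ref{cor:l=1} as the direct $\ell = 1$ specialization of Theorem~\ref{thm:main-flag-func}. By Definition~\ref{def:flag-poly-func} the single-variable exponential face function $\xi_k(x,y)$ is by definition equal to $\tilde{\xi}_k^1(x,y)$, so the task amounts to substituting $\ell = 1$ into the closed-form expression already obtained and reading off the result.

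For $\ell = 1$ there is only one indeterminate, which I rename from $x_1$ to $x$; the quantity $S = x_2 + \cdots + x_\ell + 1$ now involves an empty sum and therefore reduces to $S = 1$. Substituting $S = 1$ throughout the formula of Theorem~\ref{thm:main-flag-func}: the leading factor $1/S$ becomes $1$, the quotient $S/x_1$ becomes $1/x$, and so $e^{(S/x_1)(e^{x_1 y} - 1)}$ and $E_i\bigl((S/x_1)(e^{x_1 y} - 1)\bigr)$ turn into $e^{(e^{xy} - 1)/x}$ and $E_i\bigl((e^{xy} - 1)/x\bigr)$ respectively. The remaining factors $(x_1 y)^i$, $e^{x_1 y} - E_{k-i-1}(x_1 y)$, and $(e^{x_1 y} - 1)^{i+1}$ become $(xy)^i$, $e^{xy} - E_{k-i-1}(xy)$, and $(e^{xy} - 1)^{i+1}$. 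Assembling these terms inside the sum over $i \in \{0, 1, \ldots, k-1\}$ yields precisely the right-hand side of the corollary.

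There is no substantive obstacle here: the only point requiring a brief comment is the convention that an empty sum equals zero, forcing $S = 1$. This is the identical empty-sum specialization already invoked in the remark following Proposition~\ref{prp:exp-gen-func} to pass from the multi-variable flag generating function of the standard permutahedron to its one-variable face function, and the mechanism in the present case is the same.
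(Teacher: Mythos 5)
Your proposal is correct and matches the paper's own derivation: the corollary is obtained exactly by setting $\ell=1$ in Theorem~\ref{thm:main-flag-func}, noting that the empty sum $x_2+\cdots+x_\ell$ makes $S=1$, and renaming $x_1$ as $x$. No further comment is needed.
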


\subsection*{Acknowledgments}

The author would like to thank James F.~Lawrence
for many helpful discussions regarding the theory of polytopes
in general.

\end{document}